\documentclass[a4paper,10pt]{amsart}
\usepackage[top=0.8in, bottom=0.8in, left=1.25in, right=1.25in]{geometry}
\usepackage{fancybox}
\usepackage{amsmath}
\usepackage{amssymb}
\usepackage{amsthm}
\usepackage{hyperref}

\usepackage{mathrsfs}
\usepackage{epstopdf}
\usepackage[colorinlistoftodos]{todonotes}
\usepackage{exscale}
\usepackage{relsize}
\allowdisplaybreaks

\usepackage[all]{xy}
\usepackage{epstopdf}
\usepackage{array}
\usepackage{amscd}

\usepackage{amsfonts}
\usepackage{amssymb}
\usepackage{mathrsfs}
\usepackage{tikz}

\DeclareFontFamily{U}{rsfs}{%
\skewchar\font127}
\DeclareFontShape{U}{rsfs}{m}{n}{%
<-6>rsfs5<6-8.5>rsfs7<8.5->rsfs10}{}
\DeclareSymbolFont{rsfs}{U}{rsfs}{m}{n}
\DeclareSymbolFontAlphabet
{\mathrsfs}{rsfs}
\DeclareRobustCommand*\rsfs{%
\@fontswitch\relax\mathrsfs}

\theoremstyle{plain}
\newtheorem{thm}{Theorem}[section]
\newtheorem{prop}[thm]{Proposition}
\newtheorem{lem}[thm]{Lemma}

\newtheorem{prop-defi}[thm]{Proposition-Definition}
\newtheorem{thm-defi}[thm]{Theorem-Definition}
\newtheorem{lem-defi}[thm]{Lemma-Definition}
\newtheorem{cor-defi}[thm]{Corollary-Definition}

\newtheorem{conj}[thm]{Conjecture}

\theoremstyle{definition}
\newtheorem{defi}[thm]{Definition}
\newtheorem{exam}[thm]{Example}
\newtheorem{rmk}[thm]{Remark}

\newdimen\argwidth
\def\db[#1\db]{
 \setbox0=\hbox{$#1$}\argwidth=\wd0
 \setbox0=\hbox{$\left[\box0\right]$}
  \advance\argwidth by -\wd0
 \left[\kern.3\argwidth\box0 \kern.3\argwidth\right]}

\newcommand{\eE}{\mathcal{E}}

\newcommand{\hH}{\mathcal{H}}

\newcommand{\lL}{\mathcal{L}}

\newcommand{\oO}{\mathcal{O}}

\newcommand{\Hom}{\mathop{\rm Hom}\nolimits}

\newcommand{\dR}{\mathbf{R}}

\newcommand{\Hilb}{\mathop{\rm Hilb}\nolimits}

\newcommand{\Pic}{\mathop{\rm Pic}\nolimits}

\newcommand{\ch}{\mathop{\rm ch}\nolimits}

\newcommand{\Ext}{\mathop{\rm Ext}\nolimits}
\newcommand{\Spec}{\mathop{\rm Spec}\nolimits}

\newcommand{\Coh}{\mathop{\rm Coh}\nolimits}

\newcommand{\cneq}{\mathrel{\raise.095ex\hbox{:}\mkern-4.2mu=}}
\newcommand{\eqcn}{\mathrel{=\mkern-4.5mu\raise.095ex\hbox{:}}}

\newcommand{\DT}{\mathop{\rm DT}\nolimits}

\newcommand{\tr}{\mathop{\rm tr}\nolimits}

\newcommand{\RHom}{\mathop{\dR\mathrm{Hom}}\nolimits}

\makeatletter
 
  \@addtoreset{equation}{section}
\makeatother

\begin{document}
\title[Zero-dimensional $\DT_4$ invariants]{Zero-dimensional Donaldson-Thomas invariants \\ of Calabi-Yau 4-folds}
\author{Yalong Cao}
\address{Mathematical Institute, University of Oxford, Andrew Wiles Building, Radcliffe Observatory Quarter, Woodstock Road, Oxford, OX2 6GG }
\email{yalong.cao@maths.ox.ac.uk}
\author{Martijn Kool}
\address{Mathematical Institute, Utrecht University, P.O.~Box 80010 3508 TA Utrecht, the Netherlands}
\email{m.kool1@uu.nl}

\maketitle
\begin{abstract}
We study Hilbert schemes of points on a smooth projective Calabi-Yau 4-fold $X$. We define $\DT_4$ invariants by integrating the Euler class of a tautological vector bundle $L^{[n]}$ against the virtual class. We conjecture a formula for their generating series, which we prove in certain cases when $L$ corresponds to a smooth divisor on $X$. A parallel equivariant conjecture for toric Calabi-Yau 4-folds is proposed. This conjecture is proved for smooth toric divisors and verified for more general toric divisors in many examples. 

Combining the equivariant conjecture with a vertex calculation, we find explicit positive rational weights, which can be assigned to solid partitions. The weighted generating function of solid partitions is given by $\exp(M(q)-1)$, where $M(q)$ denotes the MacMahon function. 
\end{abstract}

%\tableofcontents

\section{Introduction}

\subsection{Background}

Hilbert schemes on a smooth projective variety $X$ are moduli schemes which parametrize subschemes of $X$ with given Hilbert polynomial.
From the point of view of coherent sheaves, they can be regarded as moduli schemes of ideal sheaves of subschemes with fixed Chern character.
The simplest example is the Hilbert scheme $\Hilb^{n}(X)$ of $n$ points on $X$, whose ideal sheaves have Chern character $(1,0,\cdots,0,-n)$. There are lots of interesting studies on their geometry, topology and representation theory, most of which are concentrated on the cases $\dim_{\mathbb{C}}X\leqslant2$. The difficulty in extending these studies to higher dimensions comes from the fact that the Hilbert schemes are in general no longer smooth.

One surprising feature about $\dim_{\mathbb{C}}X=3$ is that, although $\Hilb^{n}(X)$ can be very singular with different irreducible components of various dimensions, it still carries a degree zero virtual class $[\Hilb^{n}(X)]^{\mathrm{vir}}$ \cite{MNOP}. The degree of this class is called a degree zero Donaldson-Thomas invariant of $X$ \cite{Thomas}.  An expression for the generating series of these invariants was conjectured and verified for local toric surfaces by Maulik-Nekrasov-Okounkov-Pandharipande \cite{MNOP} and confirmed in full generality by Levine-Pandharipande \cite{LP} and Li \cite{Li}. See also \cite{BF} for another proof in the Calabi-Yau case.

Our aim is to go one dimensional higher and restrict to the case of Calabi-Yau manifolds \cite{Yau}. By the work of Borisov-Joyce \cite{BJ} and 
Cao-Leung \cite{CL}, we have a virtual class construction for Gieseker moduli spaces of stable sheaves on smooth projective Calabi-Yau 4-folds, which is in particular applicable to 
$\Hilb^{n}(X)$. A difference from the case of 3-folds is that the virtual class is no longer of degree zero, so we need natural insertions
to define invariants.

\subsection{The compact case}

Let $X$ be a smooth projective Calabi-Yau 4-fold and let $\Hilb^n(X)$ denote the Hilbert scheme of $n$ points on $X$.
Assume the existence of an orientation $o(\mathcal{L})$ on the determinant line bundle $\mathcal{L}$ over $\Hilb^n(X)$. Then the results of~\cite{BJ, CL} provide a $\DT_4$ virtual class
\begin{align}\label{intro:DT4vir}
[\Hilb^n(X)]^{\rm{vir}}_{o(\mathcal{L})} \in H_{2n}(\Hilb^n(X), \mathbb{Z}). 
\end{align}
The virtual class (\ref{intro:DT4vir}) depends on the choice of orientation $o(\mathcal{L})$. On each connected component of $\Hilb^n(X)$, there are two choices of orientations, which affects the corresponding contribution to the class
(\ref{intro:DT4vir}) by a sign. We review facts about the $\DT_4$ virtual class in Section \ref{sec:review}. 

In order to define the invariants, we require 
insertions. Let $L$ be a line bundle on $X$ and denote by $L^{[n]}$ the tautological (rank $n$) vector bundle over 
$\Hilb^{n}(X)$ with fibre $H^0(L|_Z)$ over $Z \in \Hilb^n(X)$. 
Then it makes sense to define the following:
\begin{defi}
Let $X$ be a smooth projective Calabi-Yau 4-fold and let $L$ be a line bundle on $X$. Let $\mathcal{L}$ be the determinant line bundle of $\Hilb^n(X)$ with quadratic form $Q$ induced from Serre duality. Suppose $\mathcal{L}$ is given an orientation $o(\mathcal{L})$. 
We define 
\begin{equation}\DT_4(X,L,n\,;o(\mathcal{L})):=\int_{[\Hilb^n(X)]^{\mathrm{vir}}_{o(\mathcal{L})}}e(L^{[n]})\in \mathbb{Z},\quad \textrm{if}\textrm{ } n\geqslant1, \nonumber \end{equation}
where $e(-)$ denotes the Euler class. We also set $\DT_4(X,L,0\,;o(\mathcal{L})):=1$. 
\end{defi}
We make the following conjecture for the generating series of these invariants: 
\begin{conj}[Conjecture \ref{conj on zero dim dt4}] \label{conj intro}
Let $X$ be a smooth projective Calabi-Yau 4-fold and $L$ be a line bundle on $X$. There exist choices of orientation such that
\begin{equation}\sum_{n=0}^{\infty}\DT_4(X,L,n\,;o(\mathcal{L}))\,q^n=M(-q)^{\int_Xc_1(L) \cdot c_3(X)},  \nonumber \end{equation}
where 
\begin{equation}M(q)=\prod_{n=1}^{\infty} \frac{1}{(1-q^n)^n}\nonumber \end{equation} 
denotes the MacMahon function.
\end{conj}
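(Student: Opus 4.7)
The plan is to reduce Conjecture \ref{conj intro} to the MNOP formula for degree-zero DT invariants of smooth projective $3$-folds, via the special case where $L = \oO_X(D)$ for a smooth irreducible divisor $D \subset X$. Since the exponent $\int_X c_1(L) \cdot c_3(X)$ depends only on $c_1(L)$, treating this geometric case already proves the conjecture whenever $L$ admits such a section. The tool at the level of Hilbert schemes is standard: a section $s \in H^0(X,L)$ with $\{s=0\} = D$ induces a tautological section $\widetilde s \in H^0(\Hilb^n(X), L^{[n]})$, $Z \mapsto s|_Z$, whose scheme-theoretic zero locus is $\Hilb^n(D) \hookrightarrow \Hilb^n(X)$.

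Granting a virtual-class comparison of the form
$$\int_{[\Hilb^n(X)]^{\mathrm{vir}}_{o(\mathcal{L})}} e(L^{[n]}) \;=\; \int_{[\Hilb^n(D)]^{\mathrm{vir}}} 1$$
for a suitable choice of orientation, where the right-hand side uses the MNOP virtual class on the $3$-fold Hilbert scheme, the conjecture follows from the Levine-Pandharipande/Li theorem
$$\sum_{n\geq 0} \int_{[\Hilb^n(Y)]^{\mathrm{vir}}} 1 \cdot q^n \;=\; M(-q)^{\int_Y c_3(T_Y \otimes K_Y)}$$
for smooth projective $3$-folds $Y$, applied to $Y = D$. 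Matching the exponents is a short Chern-class computation: triviality of $K_X$ and adjunction give $K_D = L|_D$ and $c_1(T_D) = -c_1(L|_D)$; expanding $c_3(T_D \otimes L|_D)$ and comparing with $c_3(T_X|_D) = c_3(T_D) + c_1(L|_D) c_2(T_D)$ from the normal-bundle sequence yields $\int_D c_3(T_D \otimes K_D) = \int_X c_1(L) \cdot c_3(X)$, exactly as required.

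The principal obstacle is the virtual-class identity above. The $\DT_4$ class of \cite{BJ, CL} is built from a real half-dimensional isotropic subbundle of the complex obstruction bundle together with an orientation of the determinant line $\mathcal{L}$, whereas the MNOP class on $\Hilb^n(D)$ comes from an algebraic symmetric obstruction theory on a $3$-fold. To compare them one must analyse the deformation theory of $\Hilb^n(X)$ restricted along $\Hilb^n(D)$: Serre duality on $X$ versus $D$ suggests that the normal directions split (after choosing an isotropic half) into $L^{[n]}$ and a shifted copy of its dual, so that cutting with $e(L^{[n]})$ exactly strips off the $4$-fold correction and leaves the $3$-fold obstruction theory of $D$. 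The delicate step is checking that this decomposition respects the real isotropic structure and that the resulting signs globalise to a single orientation for which every component contributes with a $+1$. Outside the smooth-divisor setting there is no obvious multiplicativity of $e(L^{[n]})$ in $L$, so extending the argument to arbitrary $L$ requires a genuinely new idea; this is presumably why the authors pivot to the toric equivariant setting and a vertex / solid-partition analysis advertised in the abstract.
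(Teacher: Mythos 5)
Your outline reproduces the strategy the paper itself follows, but it is not a proof: the statement is a conjecture, and the paper only \emph{verifies} it for $n\leqslant 3$ when $L=\mathcal{O}_X(D)$ with $D$ a smooth divisor (Theorem \ref{n less 4}), explicitly deferring general $n$ to Joyce's theory of D-manifolds/Kuranishi atlases. The step you ``grant'' --- the identity $\int_{[\Hilb^n(X)]^{\mathrm{vir}}_{o(\mathcal{L})}}e(L^{[n]})=\int_{[\Hilb^n(D)]^{\mathrm{vir}}}1$ --- is precisely where all the mathematical content lies, and the paper establishes it only in the range where $\Hilb^n(X)$ and $\Hilb^n(D)$ are smooth of dimensions $4n$ and $3n$. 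There the argument has two ingredients you do not supply: (i) since the tautological section of the rank-$n$ bundle $L^{[n]}$ cuts out the smooth codimension-$n$ locus $\Hilb^n(D)$, it is regular, so $e(L^{[n]})$ is Poincar\'e dual to $[\Hilb^n(D)]$ and the integral localizes to $\int_{\Hilb^n(D)}e(\mathrm{Ob}^+)|_{\Hilb^n(D)}$; and (ii) a fibrewise deformation-theoretic computation (Lemmas \ref{lem on D/X}--\ref{def-obs on 4-fold}, Proposition \ref{compare def-obs}) showing that $\Ext^2_D(I_{Z,D},I_{Z,D})_0$ embeds canonically as a \emph{maximal isotropic} subspace of $\big(\Ext^2_X(I_{Z,X},I_{Z,X})_0,Q\big)$, combined with the linear-algebra fact (Proposition \ref{isotropic iso}) that a maximal isotropic subspace projects isomorphically onto any positive real form, which yields $e(\mathrm{Ob}^+)|_{\Hilb^n(D)}=e(\mathrm{Ob}_{\Hilb^n(D)})$ and hence the MNOP virtual class. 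Your heuristic that the normal directions split into $L^{[n]}$ plus a shifted dual is the right picture, but without the isotropic-subspace argument the orientation/sign issue you flag is exactly what remains unresolved in your write-up.

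For $n\geqslant 4$ both ingredients fail as stated: the Hilbert schemes are singular, the zero locus of the section need not have the expected dimension, so $e(L^{[n]})\cap[\Hilb^n(X)]^{\mathrm{vir}}_{o(\mathcal{L})}$ cannot be computed by naive restriction to $\Hilb^n(D)$, and comparing the Borisov--Joyce class with the symmetric obstruction theory on $\Hilb^n(D)$ requires gluing local Kuranishi models --- which is the open problem the paper points to. Note also that your reduction only covers $L=\mathcal{O}_X(D)$ with $D$ smooth; since $H^1(\mathcal{O}_X)=0$ the class $c_1(L)$ does determine $L$, but an arbitrary line bundle on a Calabi--Yau 4-fold need not carry a smooth (or even effective) divisor, so even a complete argument along these lines would not reach the conjecture in the generality stated.
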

We verify Conjecture \ref{conj intro} in some good cases based on the following geometric setting, where the line bundle $L=\mathcal{O}_X(D)$ is associated to an effective divisor $D\subseteq X$. 
\begin{prop}[Proposition \ref{section s}]\label{section s intro}
Let $X$ be a smooth quasi-projective variety, $D \subseteq X$ any effective divisor, and $L=\mathcal{O}_X(D)$. There exists a tautological section $\sigma$ and an isomorphism of schemes
\begin{align*}\xymatrix{
  & L^{[n]} \ar[d]_{\pi}  &  &  \\
  \sigma^{-1}(0)\cong \Hilb^{n}(D) \ar[r]^{\quad \quad \iota} & \Hilb^{n}(X).  \ar@/_1pc/[u]_{\sigma} &  &  }
\end{align*}
\end{prop}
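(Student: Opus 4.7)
The plan is to construct $\sigma$ from the canonical defining section $s_D \in H^0(X, L)$ and then identify the scheme-theoretic zero locus $\sigma^{-1}(0)$ with $\Hilb^n(D)$ using the universal property of the Hilbert scheme via Yoneda's lemma.

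First I would construct $\sigma$ explicitly. Let $\zZ \subseteq \Hilb^n(X) \times X$ denote the universal subscheme, with projections $p_H$ and $p_X$ onto the two factors; by definition $L^{[n]} = p_{H*}(p_X^* L|_{\zZ})$, and $p_H|_{\zZ}$ is finite flat of degree $n$. Let $s_D \in H^0(X, L)$ be the canonical section whose scheme-theoretic zero locus equals $D$. Pulling back along $p_X$ and restricting to $\zZ$ gives a section $s_D|_{\zZ} \in H^0(\zZ, p_X^* L|_{\zZ})$; applying $p_{H*}$ then yields the desired tautological section $\sigma := p_{H*}(s_D|_{\zZ}) \in H^0(\Hilb^n(X), L^{[n]})$.

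Next I would identify $\sigma^{-1}(0)$ via the functor of points. A morphism $f \colon T \to \Hilb^n(X)$ from a test scheme $T$ corresponds to a flat family $\zZ_T \subseteq T \times X$ of length-$n$ subschemes; since $p_H|_{\zZ}$ is affine, base change and pushforward commute, and $p_{H*}$ is faithful on coherent sheaves. Hence $f^*\sigma = 0$ if and only if $s_D|_{\zZ_T} = 0$ as a section of $L|_{\zZ_T}$, which is in turn equivalent to the scheme-theoretic containment $\zZ_T \subseteq T \times D$ (since $s_D$ cuts out $D$ scheme-theoretically in $X$). The latter condition is exactly the defining condition of a $T$-point of $\Hilb^n(D)$. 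Thus $\sigma^{-1}(0)$ and $\Hilb^n(D)$ represent the same functor on schemes, and Yoneda's lemma provides a canonical isomorphism, realized concretely by the closed immersion $\iota \colon \Hilb^n(D) \hookrightarrow \Hilb^n(X)$ induced by $D \hookrightarrow X$.

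The main subtlety is passing from a set-theoretic statement to a scheme-theoretic one: the zero locus $\sigma^{-1}(0)$ carries a potentially non-reduced structure coming from the cosection $(L^{[n]})^\vee \to \oO_{\Hilb^n(X)}$, and one must verify that this structure coincides with that of $\Hilb^n(D)$. The key input that makes the argument go through cleanly is the finite flatness of $\zZ \to \Hilb^n(X)$, which ensures that $p_{H*}$ is exact and faithful, so that testing vanishing of a section on the base is equivalent to testing vanishing on the universal family after arbitrary base change; this is what upgrades the naive set-theoretic identification into the required scheme-theoretic isomorphism.
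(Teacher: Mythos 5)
Your proof is correct and takes essentially the same route as the paper's: both construct $\sigma$ as the pushforward along the finite flat universal family of the pullback of the defining section of $D$, and both identify $Z(\sigma)$ with $\Hilb^n(D)$ by testing against arbitrary $T$-flat families and using compatibility of the affine pushforward with base change. Your added remarks on exactness and faithfulness of $p_{H*}$ simply make explicit the step the paper leaves implicit.
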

For $n \leqslant 3$ and $D, X$ both smooth, the Hilbert schemes are smooth and we can explicitly compare deformation-obstruction theories on $X$ and $D$ (Proposition \ref{compare def-obs}). The latter gives rise to zero-dimensional 
$\DT_3$ invariants on $D$, which are known by the work of \cite{LP, Li}.
\begin{thm}[Theorem \ref{n less 4}] 
Let $X$ be a smooth projective Calabi-Yau 4-fold, $D \subseteq X$ a smooth divisor, and $L=\mathcal{O}_X(D)$. For each $n\leqslant3$, there exists a choice of orientation $o(\mathcal{L})$ such that 
\begin{equation} 
\int_{[\Hilb^{n}(X)]^{\mathrm{vir}}_{o(\mathcal{L})}}e(L^{[n]})=\int_{[\Hilb^{n}(D)]^{\mathrm{vir}}}1. \nonumber \end{equation}
In particular, Conjecture \ref{conj intro} is true in this setting.
\end{thm}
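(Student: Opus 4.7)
The plan is to reduce the virtual integral over $\Hilb^{n}(X)$ to an integral over $\Hilb^{n}(D)$ by exploiting that both Hilbert schemes are smooth for $n\leq 3$, and then identify the result as the degree zero $\DT_3$ invariant of $D$.

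For $n\leq 3$ and $X,D$ smooth, the Hilbert schemes $\Hilb^{n}(X)$ and $\Hilb^{n}(D)$ are smooth of complex dimensions $4n$ and $3n$. By Proposition \ref{section s intro}, the tautological section $\sigma$ of the rank-$n$ bundle $L^{[n]}$ cuts out $\Hilb^{n}(D)$ inside $\Hilb^{n}(X)$, and the dimension count $4n-n=3n$ shows $\sigma$ is transverse. Hence $\iota\colon\Hilb^{n}(D)\hookrightarrow\Hilb^{n}(X)$ is a regular embedding with normal bundle $L^{[n]}|_{\Hilb^{n}(D)}$, so
$$e(L^{[n]})\cap[\Hilb^{n}(X)]=\iota_{*}[\Hilb^{n}(D)].$$
On the smooth space $\Hilb^{n}(X)$ the $\DT_4$ construction of \cite{BJ,CL} reduces to a half Euler class of the self-dual obstruction bundle, so there is a cohomology class $c_{X}$, depending on the orientation $o(\mathcal{L})$, with $[\Hilb^{n}(X)]^{\mathrm{vir}}_{o(\mathcal{L})}=c_{X}\cap[\Hilb^{n}(X)]$. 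Similarly, on the smooth 3-fold Hilbert scheme $\Hilb^{n}(D)$ the standard perfect obstruction theory gives $[\Hilb^{n}(D)]^{\mathrm{vir}}=e(\mathrm{Ob}_{D})\cap[\Hilb^{n}(D)]$ for a rank-$3n$ $\DT_3$ obstruction bundle $\mathrm{Ob}_{D}$.

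The heart of the argument is to invoke Proposition \ref{compare def-obs}: the comparison of deformation-obstruction theories should exhibit $L^{[n]}|_{\Hilb^{n}(D)}$ as a maximal isotropic piece of the self-dual $\DT_4$ obstruction bundle restricted to $\Hilb^{n}(D)$, with orthogonal graded quotient naturally isomorphic to $\mathrm{Ob}_{D}$. For an appropriately chosen orientation this upgrades to the cohomological equality $\iota^{*}c_{X}=e(\mathrm{Ob}_{D})$, after which the transverse section identity together with the projection formula give
$$\int_{[\Hilb^{n}(X)]^{\mathrm{vir}}_{o(\mathcal{L})}}e(L^{[n]})=\int_{\Hilb^{n}(X)}c_{X}\cdot e(L^{[n]})=\int_{\Hilb^{n}(D)}\iota^{*}c_{X}=\int_{\Hilb^{n}(D)}e(\mathrm{Ob}_{D})=\int_{[\Hilb^{n}(D)]^{\mathrm{vir}}}1,$$
which is the main assertion. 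Conjecture \ref{conj intro} in this setting then follows by invoking the MNOP formula for degree zero $\DT_3$ invariants on smooth projective 3-folds (proven by Levine-Pandharipande \cite{LP} and Li \cite{Li}), together with the Chern class identity $\int_{D}c_{3}(T_{D}\otimes K_{D})=\int_{X}c_{1}(L)c_{3}(X)$, which is a short adjunction computation using $K_{D}=L|_{D}$ (itself a consequence of $K_{X}=\mathcal{O}_{X}$).

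The main obstacle I anticipate is orientation tracking. Since the $\DT_4$ virtual class flips sign on every connected component under a change of orientation, one must produce a \emph{global} orientation $o(\mathcal{L})$ on $\Hilb^{n}(X)$ whose restriction to $\Hilb^{n}(D)$ is compatible with the canonical orientation determined by the isotropic filtration of Proposition \ref{compare def-obs}; this is precisely where the "there exists a choice of orientation" clause in the statement is used. Once the isotropic filtration $L^{[n]}\subset\mathrm{Ext}^{2}(I_{Z},I_{Z})_{0}|_{\Hilb^{n}(D)}$ is in place, the cohomological identification $\iota^{*}c_{X}=e(\mathrm{Ob}_{D})$ reduces to a formal splitting-principle computation with the square-root Euler class and the self-duality of the obstruction bundle.
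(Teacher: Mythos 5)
Your overall architecture is the same as the paper's: use smoothness of both Hilbert schemes for $n\leqslant 3$, localize $e(L^{[n]})$ to $\Hilb^n(D)$ via the transverse tautological section of Proposition \ref{section s}, compare obstruction theories along $\Hilb^n(D)$, and finish with \cite{LP,Li} and the Chern class identity \eqref{Chernmanip}. The transversality step, the identification $[\Hilb^n(X)]^{\mathrm{vir}}_{o(\mathcal{L})}=\mathrm{PD}(e(\mathrm{Ob},Q))$, and the final reduction to $\DT_3$ invariants of $D$ are all correct and match the paper.

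However, the step you call the heart of the argument is misstated in a way that cannot be repaired as written. You claim that Proposition \ref{compare def-obs} exhibits $L^{[n]}|_{\Hilb^n(D)}$ as a \emph{maximal isotropic} subbundle of the self-dual $\DT_4$ obstruction bundle restricted to $\Hilb^n(D)$, with graded quotient $\mathrm{Ob}_D$. This fails on rank grounds: $L^{[n]}$ has rank $n$, while the $\DT_4$ obstruction bundle $\mathrm{Ob}$ has rank $6n$ (by \eqref{dimensions}), so a maximal isotropic subbundle must have rank $3n$; and if a rank-$n$ isotropic subbundle $W$ were used instead, the quotient $W^{\perp}/W$ would have rank $4n$, not $3n=\mathrm{rk}(\mathrm{Ob}_D)$. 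You have conflated the two exact sequences of Proposition \ref{compare def-obs}: the bundle $L^{[n]}|_{\Hilb^n(D)}$ (fibre $H^0(\oO_Z(D))$) appears on the \emph{deformation} side, as the cokernel of $T_{\Hilb^n(D)}\hookrightarrow T_{\Hilb^n(X)}|_{\Hilb^n(D)}$, i.e.\ as the normal bundle of the embedding — which is exactly what your transversality step already uses. The maximal isotropic subbundle of $(\mathrm{Ob}|_{\Hilb^n(D)},Q)$ is the $\DT_3$ obstruction bundle $\mathrm{Ob}_D$ itself, with fibre $\Ext^2_D(I_{Z,D},I_{Z,D})_0$ and quotient $\mathrm{Ob}_D^*$. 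Once this is corrected, the identity $\iota^*e(\mathrm{Ob},Q)=e(\mathrm{Ob}_D)$ does not follow from a splitting-principle manipulation but from Proposition \ref{isotropic iso}: a maximal isotropic subspace of $(V,Q)$ projects isomorphically (over $\mathbb{R}$) onto any positive real form $V_+$, so $e(\mathrm{Ob}^+)|_{\Hilb^n(D)}=e(V_{\mathrm{iso}})=e(\mathrm{Ob}_D)$ up to the sign absorbed into $o(\mathcal{L})$. With that substitution your chain of equalities closes and agrees with the paper's proof.
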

The proof for general $n$ will rely on Joyce's theory of D-manifolds or Kuranishi atlases. 
We hope to return to it in a future paper.

\subsection{The toric case}

When $X$ is a smooth quasi-projective toric Calabi-Yau 4-fold with action of $(\mathbb{C}^*)^4$, we can study an equivariant version of Conjecture \ref{conj intro}. Despite the non-compactness of $X$ and $\Hilb^n(X)$, we can still define an equivariant version of the $\DT_4$ virtual class on the torus fixed locus, which consists of a finite number of reduced points.

The definition involves the subtorus $T \subseteq (\mathbb{C}^*)^4$ preserving the Calabi-Yau volume form and hence Serre duality pairing. We note the following equality of fixed loci (Lemma \ref{Tfixlocus}, \ref{Tfixlocus scheme}) 
$$
\Hilb^n(X)^{T}=\Hilb^n(X)^{(\mathbb{C}^*)^4}.
$$ 
For any $Z\in \Hilb^n(X)^T$, 
we consider the equivariant Euler class
\begin{equation}e_T(\Ext^1(I_Z,I_Z))\in H^*(BT), \nonumber \end{equation}
and also the half Euler class
\begin{equation}e_T(\Ext^2(I_Z,I_Z),Q)\in H^*(BT), \nonumber \end{equation}
where $Q$ is the quadratic form induced from the Serre duality pairing on $\Ext^2(I_Z,I_Z)$.
We then have 
\begin{equation} \label{signeqn}
e_T(\Ext^2(I_Z,I_Z),Q) = \pm \sqrt{(-1)^{\frac{ext^2(I_Z,I_Z)}{2}}\,e_T\big(\Ext^2(I_Z,I_Z)\big)},
\end{equation}
where the class $(-)$ in $\sqrt{(-)}$ is a square and the sign depends on the choice of orientation.
\begin{defi}(Definition \ref{def of equ virtual class}) 
The $T$-equivariant virtual class of $\Hilb^{n}(X)$ is 
\begin{equation}[\Hilb^{n}(X)]_{T,o(\lL)}^{\mathrm{vir}}:=\sum_{Z\in\Hilb^{n}(X)^T}\frac{e_T\big(\Ext^{2}(I_Z,I_Z),Q\big)}{e_T\big(\Ext^{1}(I_Z,I_Z)\big)}, \nonumber \end{equation}
where $o(\lL)$ denotes a choice of sign in  (\ref{signeqn}) for each $Z\in\Hilb^{n}(X)^T$.

\end{defi}
By fixing a $T$-equivariant line bundle $L$ on $X$, we can consider the equivariant Euler class of its tautological bundle $e_T(L^{[n]})$ and define
\begin{equation}\DT_4(X,T,L,n\,;o(\lL)):=\sum_{Z\in\Hilb^{n}(X)^T}\frac{e_T\big(\Ext^{2}(I_Z,I_Z),Q\big)\cdot e_T(L^{[n]}|_Z)}{e_T\big(\Ext^{1}(I_Z,I_Z)\big)}. \nonumber \end{equation}
An equivariant version of Conjecture \ref{conj intro} can then be posed as follows:
\begin{conj}[Conjecture \ref{conj for toric}] \label{conj intro equi}
Let $X$ be a smooth quasi-projective toric Calabi-Yau 4-fold and $L$ be a $T$-equivariant line bundle on $X$. Then there exist choices of orientation $o(\mathcal{L})$ such that  
\begin{equation}\sum_{n=0}^{\infty}\DT_4(X,T,L,n\,;o(\mathcal{L}))\,q^n=M(-q)^{\mathlarger{\int}_Xc_1^{T}(L)\,\cdot\,c_3^{T}(X)}, \nonumber \end{equation}
where $\int_X$ denotes equivariant push-forward to a point.
\end{conj}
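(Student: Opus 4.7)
The plan is to apply $T$-equivariant localization to both sides and reduce Conjecture \ref{conj intro equi} to a vertex-level identity attached to each $(\mathbb{C}^*)^4$-fixed point of $X$. Since $\Hilb^n(X)^T$ is a finite set of reduced points, both the invariant and the exponent on the right-hand side should factor as products over those vertices, so the conjecture decomposes into independent local statements at each toric chart.

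First I would analyze the fixed locus. Since $X$ is toric Calabi-Yau, each fixed point $p_\alpha \in X$ admits a chart $\cong \mathbb{C}^4$ with weights $t_{\alpha,1},\ldots,t_{\alpha,4}$ satisfying $\sum_i t_{\alpha,i}=0$. A $T$-fixed $Z \in \Hilb^n(X)$ decomposes as $Z=\bigsqcup_\alpha Z_\alpha$, where each $Z_\alpha$ is a monomial ideal at $p_\alpha$ corresponding to a solid partition $\pi_\alpha$, with $\sum_\alpha |\pi_\alpha|=n$. Correspondingly, $\Ext^i(I_Z,I_Z)$ and the tautological fibre $L^{[n]}|_Z=H^0(L|_Z)$ split as direct sums indexed by $\alpha$, so the local integrand is a product of 4-fold vertex factors $V(\pi_\alpha;s_\alpha,t_{\alpha,\bullet})$, where $s_\alpha$ is the $T$-weight of $L$ at $p_\alpha$.

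Next I would localize the right-hand side. The Calabi-Yau relation $\sum_i t_{\alpha,i}=0$ yields
\begin{equation*}
\int_X c_1^T(L)\cdot c_3^T(X) \;=\; \sum_\alpha \frac{s_\alpha\cdot e_3(t_{\alpha,\bullet})}{e_4(t_{\alpha,\bullet})},
\end{equation*}
so that $M(-q)^{\int_X c_1^T(L)\cdot c_3^T(X)}$ also factorizes into a product of per-vertex MacMahon exponentials. The conjecture therefore reduces to proving, for each vertex and some choice of orientation, the single-vertex identity
\begin{equation*}
\sum_{\pi \text{ solid}} V(\pi;s,t_1,\ldots,t_4)\,q^{|\pi|} \;=\; M(-q)^{\,s\,e_3(t_\bullet)/e_4(t_\bullet)},
\end{equation*}
with $\sum_i t_i=0$.

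The main obstacle is this vertex identity. Solid partitions are notoriously resistant to closed-form generating-function techniques, and no direct analogue of the 3-fold topological vertex is known. The most promising route is to exploit the section reduction in Proposition \ref{section s intro}: when $L=\oO_X(D)$ for a smooth toric divisor $D$, the vanishing of the tautological section $\sigma$ identifies $\sigma^{-1}(0)$ with $\Hilb^n(D)$, so one should be able to rewrite each 4-fold vertex as an equivariant $\DT_3$-type vertex on $D$, at which point the MNOP formula of \cite{MNOP,LP,Li} closes the argument. This would establish Conjecture \ref{conj intro equi} in the smooth toric divisor case for all $n$, upgrading Theorem \ref{n less 4} to $n\geqslant 4$ through the toric setting. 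The general equivariant $L$ case would then need either a rigidity or deformation argument reducing to the divisor case, or a direct combinatorial attack on the 4-fold vertex. Independently, computing the vertex modulo small powers of $q$ would pin down the correct sign conventions in (\ref{signeqn}) required to produce the minus sign in $M(-q)$.
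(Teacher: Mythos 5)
Your proposal follows essentially the same route as the paper: torus localization plus the vertex factorization reduce the conjecture to a single-vertex identity on $\mathbb{C}^4$ (the paper's Proposition \ref{compare equi conj} and Conjecture \ref{affineconj}), and the smooth toric divisor case is handled exactly as in Theorem \ref{thm for equiv sm div}, by observing that $e_T(L^{[n]}|_Z)=0$ for fixed points $Z\not\subseteq D$ and matching the surviving weights with MNOP's equivariant $\DT_3$ vertex on $D$. Like the paper, this stops short of a general proof at precisely the same point --- the solid-partition vertex identity, which the paper only verifies modulo $q^7$ --- so the plan reproduces the paper's partial results without closing the remaining gap.
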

When $L=\oO_X(D)$ corresponds to a smooth toric divisor $D$, we can prove Conjecture \ref{conj intro equi}.
\begin{thm}[Theorem \ref{thm for equiv sm div}]\label{thm for equiv sm div intro}
Conjecture \ref{conj intro equi} is true for $L=\oO_X(D)$, where $D\subseteq X$ is a smooth $(\mathbb{C}^*)^4$-invariant divisor.
\end{thm}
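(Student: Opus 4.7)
The plan is to evaluate the left-hand side via $T$-equivariant localization and reduce to a local vertex computation at each toric fixed point of $X$, and then match this against the equivariant localization of the right-hand side. By the identification $\Hilb^n(X)^T = \Hilb^n(X)^{(\mathbb{C}^*)^4}$ the equivariant fixed locus is a finite reduced set indexed by tuples of solid partitions $(\pi_p)_{p \in X^T}$ with $\sum_p |\pi_p| = n$, so after summing over $n$ the generating series factors as
\begin{equation*}
\sum_{n \geq 0} \DT_4(X, T, L, n; o(\mathcal{L}))\, q^n = \prod_{p \in X^T} V_p(q),
\end{equation*}
where the local vertex $V_p(q)$ depends only on the tangent weights $(t_1^p, \ldots, t_4^p)$ at $p$ (with $\sum_i t_i^p = 0$) and on the equivariant weight of $L$ at $p$.

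The key step is to compute each $V_p(q)$ using the smoothness of $D$ together with the $T$-equivariant refinement of Proposition \ref{section s intro}, which supplies a tautological section $\sigma$ of $L^{[n]}$ with zero scheme $\Hilb^n(D) \subset \Hilb^n(X)$. When $p \notin D$, the canonical toric linearization of $\oO_X(D)$ satisfies $c_1^T(L)|_p = 0$ and a direct character computation reduces the local series to $V_p(q) = 1$, matching the vanishing of the desired exponent at $p$. When $p \in D$, smoothness of $D$ implies that it is cut out locally by a single coordinate $x_{i(p)}$ of weight $\lambda_p = t_{i(p)}^p$, so $\sigma(Z) = 0$ iff $Z \subset D$ iff the solid partition $\pi_p$ is a plane partition in the three remaining directions. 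An explicit decomposition of the $T$-characters of $\Ext^1(I_Z, I_Z)$, $\Ext^2(I_Z, I_Z)$ and $L^{[n]}|_Z$ at such monomial ideals, together with the half-Euler-class symmetrization of the $\Ext^2$ factor, reduces the local series to the three-dimensional DT vertex on $\mathbb{C}^3$. Using the classical MacMahon evaluation of \cite{MNOP, LP, Li}, one obtains
\begin{equation*}
V_p(q) = M(-q)^{c_1^T(L)|_p \cdot c_3^T(X)|_p / e_T(T_p X)},
\end{equation*}
after the appropriate vertex-wise choice of orientation $o(\mathcal{L})$ fixing the sign in (\ref{signeqn}).

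Assembling the product over $p \in X^T$ and comparing with the equivariant localization formula
\begin{equation*}
\int_X c_1^T(L) \cdot c_3^T(X) = \sum_{p \in X^T} \frac{c_1^T(L)|_p \cdot c_3^T(X)|_p}{e_T(T_p X)}
\end{equation*}
on the right-hand side of Conjecture \ref{conj intro equi} then yields the claimed identity. The main obstacle is the local vertex computation at $p \in D$: one must explicitly decompose $\Ext^i(I_Z, I_Z)$ as $T$-representations at each monomial ideal lying on the facet $\{x_{i(p)} = 0\}$, track the $L^{[n]}|_Z$ contribution induced by $\sigma$, and verify that after Serre-duality symmetrization the resulting sum over plane partitions in three variables evaluates to the stated MacMahon power. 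A secondary subtlety is choosing the orientation coherently across all vertices so that the signs combine into a single MacMahon exponential with argument $-q$ rather than $q$; here one exploits the pointwise flexibility in (\ref{signeqn}).
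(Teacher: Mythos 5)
Your overall route is the same as the paper's: localize to the finite reduced fixed locus, kill the contribution of every $T$-fixed $Z$ not scheme-theoretically contained in $D$ via $e_T(L^{[n]}|_Z)=0$ (the paper's Lemma \ref{vanishing}; your treatment of the charts with $p\notin D$ and of solid partitions with $\pi_{111}>1$ is correct), identify the surviving sum with the equivariant degree-zero DT series of the smooth toric $3$-fold $D$, quote \cite[II, Thm.~2]{MNOP}, and match exponents by localizing $\int_X c_1^T(L)\cdot c_3^T(X)=\int_D c_3^T(TD\otimes K_D)$. The only difference is presentational: the paper runs the middle step globally over $\Hilb^n(D)^T$ rather than vertex by vertex.

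The genuine gap is precisely the step you label ``the main obstacle'' and then do not carry out: proving that for $Z\subseteq D$ one has $e_T\big(\Ext^2_X(I_{Z,X},I_{Z,X}),Q\big)\,e_T(L^{[n]}|_Z)\,/\,e_T\big(\Ext^1_X(I_{Z,X},I_{Z,X})\big)=\pm\,e_T\big(\Ext^2_D(I_{Z,D},I_{Z,D})\big)/e_T\big(\Ext^1_D(I_{Z,D},I_{Z,D})\big)$. This is not a routine character decomposition of monomial ideals, and ``half-Euler-class symmetrization'' by itself does not produce it. The paper's mechanism is the pushforward distinguished triangle $\RHom_D(\oO_Z,\oO_Z)\to\RHom_X(\oO_Z,\oO_Z)\to\RHom_D(\oO_Z,\oO_Z\otimes K_D)[-1]$ (combined with Lemma \ref{lem on toric identify deform-obs} to replace $\Ext^i(I_Z,I_Z)$ by $\Ext^i(\oO_Z,\oO_Z)$), which together with $T$-equivariant Serre duality gives, in $K_T(\bullet)$, the identity $\Ext^1_X-\Ext^2_X+\Ext^3_X=\Ext^1_D+(\Ext^1_D)^*-\Ext^2_D-(\Ext^2_D)^*+H^0(\oO_Z\otimes K_D)+H^0(\oO_Z\otimes K_D)^*$. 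This exhibits $(-1)^n e_T(\Ext^1_X)e_T(\Ext^3_X)/e_T(\Ext^2_X)$ as a perfect square, so via \eqref{signeqn} the half Euler class of $\Ext^2_X$ is (up to the orientation sign) its square root, and the leftover factor $e_T\big(H^0(\oO_Z\otimes K_D)\big)$ is cancelled exactly by $e_T(L^{[n]}|_Z)$ because $L|_D=N_{D/X}=K_D$ on a Calabi--Yau $4$-fold. This adjunction identity is the structural reason the tautological insertion kills the normal-direction deformations and the $4$-fold vertex collapses to the $3$-fold one; it appears nowhere in your sketch, and without it the claim that the local series ``evaluates to the stated MacMahon power'' is unsupported.
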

Any smooth quasi-projective toric Calabi-Yau 4-fold $X$ can be covered by open $(\mathbb{C}^*)^4$-invariant subsets (equivariantly) isomorphic to $\mathbb{C}^4$. On each such subset, 
every $(\mathbb{C}^*)^4$-invariant zero-dimensional subscheme corresponds to a \textit{solid partition} $\pi = \{\pi_{ijk}\}_{i,j,k \geqslant 1}$, i.e.~a sequence of non-negative integers $\pi_{ijk} \in \mathbb{Z}_{\geqslant 0}$ satisfying
\begin{align*}
&\pi_{ijk} \geqslant \pi_{i+1,j,k}, \quad \pi_{ijk} \geqslant \pi_{i,j+1,k}, \quad \pi_{ijk}\geqslant \pi_{i,j,k+1} \quad \forall \textrm{ } 
i,j,k \geqslant 1, \end{align*}
\begin{equation}|\pi| := \sum_{i,j,k \geqslant 1} \pi_{ijk} < \infty, \nonumber \end{equation}
where $|\pi|$ is called the \emph{size} of $\pi$.

Using a vertex formalism as in MNOP \cite{MNOP}, we reduce Conjecture \ref{conj intro equi} to the case $X=\mathbb{C}^4$ (Proposition \ref{compare equi conj}). This leads us to assigning expressions $L_\pi(d_1,d_2,d_3,d_4)$ (coming from $e_T(L^{[n]})$) and $\mathsf{w}_\pi$ (coming from $e_T\big(\Ext^{2}(I_Z,I_Z),Q\big) / e_T\big(\Ext^{1}(I_Z,I_Z)\big)$) to any solid partition $\pi$. See Definition \ref{wpi}. In fact, the equivariant weight $\mathsf{w}_{\pi}$ is only defined up to sign, reflecting the different signs in \eqref{signeqn} for different choices of orientation. The case $X = \mathbb{C}^4$ then essentially corresponds to the following conjecture (which now includes a uniqueness assertion).
\begin{conj}[Conjectures \ref{affineconj} and \ref{affineconj unique}]\label{affineconj intro}
There exists a unique way of choosing the signs for the equivariant weights $\mathsf{w}_{\pi}$ such that
$$
\sum_{\pi} L_\pi(d_1,d_2,d_3,d_4) \, \mathsf{w}_\pi \, q^{|\pi|} = M(-q)^{\frac{(d_1 \lambda_1+d_2 \lambda_2+d_3 \lambda_3+d_4 \lambda_4)(-\lambda_1\lambda_2\lambda_3-\lambda_1\lambda_2\lambda_4-\lambda_1\lambda_3\lambda_4-\lambda_2\lambda_3\lambda_4)}{\lambda_1\lambda_2\lambda_3\lambda_4}}
$$
holds in $\frac{\mathbb{Q}(\lambda_1,\lambda_2,\lambda_3,\lambda_4)}{(\lambda_1+\lambda_2+\lambda_3+\lambda_4)}(d_1,d_2,d_3,d_4)[\![q]\!]$, where the sum is over all solid partitions and $M(q)$ denotes the MacMahon function.
\end{conj}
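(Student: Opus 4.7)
The plan is to prove both existence and uniqueness of the sign assignment $\epsilon_\pi \in \{\pm 1\}$ for $\mathsf{w}_\pi$ by induction on $n = |\pi|$. Writing $F(q; d, \lambda)$ for the right-hand side of the conjectured identity, the inductive step amounts to choosing $\epsilon_\pi$ for $|\pi| = n$ so that
\[
\sum_{|\pi| = n} \epsilon_\pi \, L_\pi(d_1,\ldots,d_4) \, \mathsf{w}_\pi \;=\; [q^n]\, F(q; d, \lambda),
\]
where the right-hand side is determined by the previously-fixed signs via the exponential structure of $F$. Base cases for small $n$ can be verified directly, as there are only finitely many solid partitions of a given size and each yields a single equation in $d_i,\lambda_i$.

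For \emph{uniqueness}, the key input is the structure of $L_\pi$ as a polynomial in $d_1,\ldots,d_4$ of degree exactly $n$: it is the equivariant Euler class of the rank-$n$ bundle $L^{[n]}$, whose $T$-weights are linear in the $d_i$ with coefficients determined by the boxes of $\pi$. A difference of two valid sign assignments would yield a non-trivial vanishing relation among the $L_\pi$'s of a fixed size, viewed as polynomials in $d_i$. I would establish linear independence of these polynomials either directly, by identifying distinct leading monomials coming from the box combinatorics of $\pi$, or by restricting to a coordinate hyperplane (setting one $d_i = 0$) and reducing to the analogous three-dimensional linear independence already implicit in the MNOP vertex.

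For \emph{existence}, the plan is to leverage Theorem \ref{thm for equiv sm div intro}. The vertex formalism of Proposition \ref{compare equi conj} localizes the compact equivariant identity to $\mathbb{C}^4$, and plugging in $L = \oO_X(D_i)$ for the four toric coordinate hyperplanes $D_i \subseteq \mathbb{C}^4$ yields four specializations of the sought-after identity at distinguished values of $d_i$. These specializations, combined with a polynomial interpolation / Zariski density argument in the $d_i$'s (using that both sides are polynomial in $d_i$ of bounded degree at each order of $q$), should determine the full identity. Together with the uniqueness step, this then pins down the signs $\epsilon_\pi$ consistently across all $n$.

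The \emph{main obstacle} is ensuring that Theorem \ref{thm for equiv sm div intro} supplies enough specializations to overdetermine the coefficient of $q^n$, a polynomial of degree $n$ in four variables: only smooth toric divisors are handled directly, so one may need to enlarge the geometric input, for instance by treating certain singular toric divisors, by combining the smooth divisor identity with derivatives or limits at boundary strata, or by verifying additional low-order cases by hand. A more structural alternative would be to construct a canonical orientation $o(\lL)$ in the sense of \cite{BJ, CL} whose induced signs are intrinsically compatible with the vertex formalism at $\mathbb{C}^4$; such a canonical choice would make the $\epsilon_\pi$'s well-defined by construction and bypass sign-matching altogether. The well-known combinatorial intractability of solid partitions, which admit no clean product formula, suggests that no simple closed-form description of $\epsilon_\pi$ is to be expected, and that any complete proof must proceed inductively rather than by a direct combinatorial bijection.
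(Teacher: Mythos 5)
Be aware first that the statement you are proving is an \emph{open conjecture} in the paper: the authors do not prove Conjectures \ref{affineconj} and \ref{affineconj unique}. They verify them modulo $q^7$ by a Maple computation (Theorem \ref{verify affine conj}), prove the equivalence with the toric Conjecture \ref{conj for toric} via the vertex formalism (Proposition \ref{compare equi conj}), and establish only the smooth-toric-divisor specialization (Theorem \ref{thm for equiv sm div}). Your proposal must therefore be judged as a plan of attack on an open problem, and both halves of the plan contain a genuine gap.

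The uniqueness step fails for a structural reason. Every factor of $L_\pi(d_1,d_2,d_3,d_4)=e_T\big(H^0(\oO_Z\otimes L)\big)$ has the form $\delta+w_b$, where $\delta:=d_1\lambda_1+d_2\lambda_2+d_3\lambda_3+d_4\lambda_4$ and $w_b$ is the torus weight of the box $b\in\pi$. Hence $L_\pi$ depends on $(d_1,\dots,d_4)$ only through the single linear form $\delta$ and is a \emph{monic} degree-$|\pi|$ polynomial in $\delta$. All $L_\pi$ with $|\pi|=n$ therefore share the same leading term $\delta^n$ (there are no distinguishing leading monomials), and they span a space of dimension at most $n+1$ over $\mathbb{Q}(\lambda_1,\lambda_2,\lambda_3)$, whereas the number of solid partitions of size $n$ grows much faster ($140$ already for $n=6$). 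Linear independence is thus badly false; a difference of two sign assignments gives a relation $\sum_{\pi\in S}\pm 2\,\mathsf{w}_\pi L_\pi=0$ whose impossibility, if true, must come from the specific arithmetic of the rational functions $\mathsf{w}_\pi$ together with the $\{0,\pm1\}$ constraint on coefficients --- which is precisely why the authors can only check uniqueness order by order by computer. Restricting to a hyperplane $d_i=0$ does not help, since the dependence is only through $\delta$, and the $3$-fold MNOP vertex has no sign ambiguity to reduce to. The existence step inherits the same defect: the smooth invariant divisors of $\mathbb{C}^4$ are the four coordinate hyperplanes, so Theorem \ref{thm for equiv sm div} pins the identity down only at $d=-e_1,\dots,-e_4$, i.e.\ at four values of $\delta$, which cannot determine a degree-$n$ polynomial in $\delta$ for $n\geqslant 4$; and knowing the \emph{sum} at a few specializations does not by itself produce the individual signs $\epsilon_\pi$. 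You correctly identify this as the main obstacle, but absent a resolution (for instance a canonical orientation intrinsically compatible with the vertex, which is not supplied by the references the paper relies on), the proposal does not yield a proof and does not go beyond what the paper itself establishes.
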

Besides Theorem \ref{thm for equiv sm div intro}, we verify Conjecture \ref{affineconj intro} in the following setting by using a Maple program, which calculates $\mathsf{w}_\pi$ for a given solid partition $\pi$.
\begin{thm}[Theorem \ref{verify affine conj}]
Conjecture \ref{affineconj intro} is true modulo $q^7$. 
\end{thm}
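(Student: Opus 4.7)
The plan is to verify the identity of Conjecture \ref{affineconj intro} coefficient-by-coefficient in $q$ up to order $6$, by direct computation. Because both sides are formal power series in $q$ over the stated field of rational functions in $\lambda_1,\dots,\lambda_4,d_1,\dots,d_4$, the claim modulo $q^7$ reduces to seven (really, six nontrivial) finite identities between explicit rational functions, each of which I can check after assembling enough data from the fixed-point side.

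First I would enumerate all solid partitions $\pi$ with $|\pi|\leqslant 6$. These can be produced recursively by adding a box in each addable corner of a smaller solid partition; the counts grow modestly (of order a couple of hundred in total for $|\pi|\leqslant 6$), so enumeration is finite. For each $\pi$ I compute the $T$-character of the structure sheaf $\oO_{Z_\pi}$ of the corresponding $(\mathbb{C}^*)^4$-fixed subscheme of $\mathbb{C}^4$, namely $Z_\pi(t_1,t_2,t_3,t_4)=\sum_{(i,j,k,l)\in\pi} t_1^{i-1}t_2^{j-1}t_3^{k-1}t_4^{l-1}$. Then I apply the four-dimensional vertex formalism (the Calabi--Yau 4-fold analog of the MNOP vertex already set up in the earlier sections of the paper) to extract the characters of $\Ext^1(I_{Z_\pi},I_{Z_\pi})$ and $\Ext^2(I_{Z_\pi},I_{Z_\pi})$ as rational functions in $t_1,\dots,t_4$; setting $t_i=e^{\lambda_i}$ and keeping the leading term yields the equivariant weights from which $e_T(\Ext^1)$ and the multiset underlying $e_T(\Ext^2)$ are read off. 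In parallel, the weights of $L^{[n]}|_{Z_\pi}$ are the sums $d_1\lambda_1+d_2\lambda_2+d_3\lambda_3+d_4\lambda_4+(i-1)\lambda_1+(j-1)\lambda_2+(k-1)\lambda_3+(l-1)\lambda_4$ over $(i,j,k,l)\in\pi$, so $L_\pi$ is an explicit product.

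Next I compute $\mathsf{w}_\pi$ from the half-Euler class. Using the Serre-duality pairing on $\Ext^2$, I pick a maximal isotropic subspace (equivalently, pair off the weights into Serre-dual pairs and select one weight from each pair); the Euler class of this subspace provides a square root of $(-1)^{\ext^2/2}e_T(\Ext^2)$, determined up to an overall $\epsilon_\pi\in\{\pm 1\}$ per partition. Then
\begin{equation}
\mathsf{w}_\pi=\epsilon_\pi\,\frac{e_T(\Ext^2,Q)}{e_T(\Ext^1)} \nonumber
\end{equation}
is a rational function in $\lambda_1,\dots,\lambda_4$ on the hyperplane $\sum\lambda_i=0$. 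Collecting $\sum_{|\pi|=n}L_\pi \mathsf{w}_\pi$ for $n\leqslant 6$ and matching against the expansion of $M(-q)^{\cdots}$ on the right-hand side gives, for each $n$, a linear condition in the unknown signs $\{\epsilon_\pi\}_{|\pi|=n}$ (with the previously-fixed $\epsilon$'s contributing). One then checks that there is a unique solution at each order and that the condition is simultaneously satisfied, yielding both the identity and the uniqueness claim.

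The main obstacle is computational rather than conceptual: solid partitions, unlike plane partitions, lack a closed product formula for weights, and the number of $\Ext^2$-weights grows roughly like $|\pi|$ times a combinatorial factor, so the rational-function simplifications needed to combine several hundred terms are only feasible in a computer algebra system. The entire verification is therefore implemented in Maple, which (i) enumerates solid partitions, (ii) evaluates the vertex characters via partial-fraction manipulations on the hyperplane $\sum\lambda_i=0$, (iii) fixes the half-Euler class by making an algorithmic choice (e.g.~a fixed total order on $T$-weights) of representative in each Serre-dual pair, and (iv) solves the resulting $\{\pm 1\}$-linear system order by order up to $q^6$. A subtle point to be careful about is that the pairing $Q$ may have fixed points among the weights (self-dual weights); these must be identified and handled separately so that the square-root extraction remains well-defined as a rational function on $(\sum\lambda_i=0)$.
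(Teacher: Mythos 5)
Your proposal coincides with the paper's own proof: Theorem \ref{verify affine conj} is established purely by a finite Maple computation that enumerates the solid partitions of size $\leqslant 6$, extracts $e_T(\Ext^1)$, $e_T(\Ext^2,Q)$ and $L_\pi$ from the equivariant vertex $\mathsf{V}_\pi$, and fixes the signs of $\mathsf{w}_\pi$ order by order against the expansion of the MacMahon power on the right-hand side. The only difference is organizational: to make the sign search and the uniqueness check tractable (there are $2^{140}$ a priori sign choices at order $q^6$), the paper first specializes to $d_1=d_2=d_3=0$, $d_4=-d$, $\lambda_1+\lambda_2+\lambda_3=0$ and compares coefficients of powers of $d$ (respectively of the monomials $d_1^{i_1}d_2^{i_2}d_3^{i_3}d_4^{i_4}$), which determines the signs in stages rather than by the naive search you describe.
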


\subsection{Application to counting solid partitions}

By experimental study of many examples, we find that the specialization 
\begin{equation} \label{specialize}
L_\pi(0,0,0,-d) \, \mathsf{w}_\pi \Big|_{\lambda_1+\lambda_2+\lambda_3=0}
\end{equation}
is well-defined. 
We pose the following conjecture:
\begin{conj}[Conjecture \ref{specconj}]\label{specconj intro}
Let $\pi$ be a solid partition and let $\mathsf{w}_\pi$ be defined using the unique sign in Conjecture \ref{affineconj intro}. Then the following properties hold:
\begin{enumerate}
\item[(a)] $L_\pi(0,0,0,-d) \, \mathsf{w}_\pi \in \frac{\mathbb{Q}(\lambda_1,\lambda_2,\lambda_3,\lambda_4,d)}{(\lambda_1+\lambda_2+\lambda_3+\lambda_4)}$ has no pole at $\lambda_4 = -(\lambda_1+\lambda_2+\lambda_3)$.
\item[(b)] The specialization $L_\pi(0,0,0,-d) \, \mathsf{w}_\pi \Big|_{\lambda_1+\lambda_2+\lambda_3=0}$ is independent of $\lambda_1,\lambda_2,\lambda_3$.
\item[(c)] More precisely, there exists a rational number $\omega_\pi \in \mathbb{Q}_{>0}$ (independent of $d$) such that
\begin{equation} \label{Lwpieqintro}
L_\pi(0,0,0,-d) \, \mathsf{w}_\pi \Big|_{\lambda_1+\lambda_2+\lambda_3=0} = (-1)^{|\pi|} \, \omega_\pi \, \prod_{l=1}^{\pi_{111}} (d-(l-1)).
\end{equation}
In particular, for $d \in \mathbb Z_{>0}$, the LHS vanishes when $\pi_{111} > d$ and otherwise has the same sign as $(-1)^{\pi}$.
\end{enumerate}
\end{conj}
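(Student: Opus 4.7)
The plan is to work entirely in the combinatorial setting of the toric vertex on $\mathbb{C}^4$, treating $L_\pi(0,0,0,-d)\,\mathsf{w}_\pi$ as a rational function in $\lambda_1,\lambda_2,\lambda_3,\lambda_4,d$ subject to the CY4 relation $\lambda_1+\lambda_2+\lambda_3+\lambda_4=0$. All three parts are local statements at the single torus fixed point indexed by $\pi$, so they should reduce to explicit manipulations of the virtual character underlying $\mathsf{w}_\pi$ together with the tautological character $\sum_{(i,j,k,l)\in\pi}t_1^{-(i-1)}t_2^{-(j-1)}t_3^{-(k-1)}t_4^{-(l-1)-d}$ whose equivariant Euler class is $L_\pi(0,0,0,-d)$. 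In particular, no global geometry of $\Hilb^n(X)$ nor the section of Proposition \ref{section s intro} needs to enter.

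For part (a), I would catalogue the weights of $\Ext^i(I_Z,I_Z)$ for $i=1,2$ at the fixed point corresponding to $\pi$ and identify those which become proportional to $\lambda_1+\lambda_2+\lambda_3$ on the CY3 slice (equivalently, those which vanish to first order in $\lambda_4$). The key structural tool is Serre duality on the CY4 character, whose twist becomes trivial on the slice $\lambda_1+\lambda_2+\lambda_3=0$; the goal is to show that every potential pole of $\mathsf{w}_\pi$ along this slice is matched by a zero of the half Euler class of $\Ext^2(I_Z,I_Z)$ or is absorbed by the explicit zeros of $L_\pi(0,0,0,-d)$ produced by lattice points in the column $(1,1,1)$. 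For part (b), after eliminating $\lambda_4$ the expression is homogeneous of degree $0$ in $(\lambda_1,\lambda_2,\lambda_3)$; combined with the regularity from (a), this forces the restriction to $\lambda_1+\lambda_2+\lambda_3=0$ to be invariant under rescaling, hence a rational function of $d$ alone.

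For part (c) I would identify this rational function by three steps. \textbf{(i)} The column $(1,1,1,\,\cdot\,)$ of $\pi$ contributes a factor to $L_\pi(0,0,0,-d)$ that restricts on the slice $\lambda_4=0$ to $\lambda_4^{\pi_{111}}$ times a polynomial in $d$ of degree $\pi_{111}$, whose roots are forced by the lattice points of the column to be $0,1,\dots,\pi_{111}-1$ up to sign. \textbf{(ii)} No other column of $\pi$ produces a surviving $d$-dependent factor on the slice, since away from $(1,1,1)$ the $\lambda_4$-coefficient is degenerate and the $d$-dependence drops out. Thus the product $L_\pi\,\mathsf{w}_\pi$, after cancellation against the compensating $\lambda_4^{-\pi_{111}}$ pole of $\mathsf{w}_\pi$ coming from Serre-duality-paired weights in $\Ext^1$ and $\Ext^2$, yields a polynomial in $d$ of degree $\pi_{111}$ with the prescribed roots, matching the falling factorial $\prod_{l=1}^{\pi_{111}}(d-(l-1))$. \textbf{(iii)} The leading coefficient $(-1)^{|\pi|}\omega_\pi$ is then extracted from the ratio of leading terms; the sign $(-1)^{|\pi|}$ accounts for the $|\pi|$ factors of $\lambda_4$ generated in specializing $L_\pi$, while positivity of $\omega_\pi$ is the delicate piece.

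The main obstacle will be the positivity assertion $\omega_\pi\in\mathbb{Q}_{>0}$. Parts (a), (b), and the degree/vanishing claims of (c) should reduce to careful bookkeeping of characters on the CY3 slice, where Serre-duality-paired contributions cancel in a controlled way. Positivity, however, is a genuinely non-local statement about the sign in \eqref{signeqn} and depends on the uniqueness of signs asserted in Conjecture \ref{affineconj intro}, which the paper only verifies up to small order in $q$. A conceptual proof would presumably exhibit $\omega_\pi$ manifestly as a square, for instance via a real Kuranishi model for $\Hilb^n(\mathbb{C}^4)$ at $\pi$ in the sense of Borisov--Joyce, or as an alternating sum of squares indexed by sub-solid-partitions of $\pi$; producing such a presentation appears to be the heart of Conjecture \ref{specconj intro} and is currently open.
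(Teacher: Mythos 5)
The statement you are addressing is stated as a \emph{conjecture} in the paper (Conjecture \ref{specconj}), and the paper offers no proof of it: the only support given is Proposition \ref{verify conj on poly dep}, namely a Maple verification for all solid partitions of size $\leqslant 6$ and a handful of larger examples listed in Appendix A, together with the case $\pi_{111}=1$, $d=1$, which is deduced from the smooth-divisor result (Theorem \ref{thm for equiv sm div}) and the MNOP vertex computation. Your proposal is likewise not a proof but a strategy outline, and you correctly flag at the end that the essential content (well-definedness of the specialization and positivity of $\omega_\pi$) remains open. That honesty is appreciated, but the text cannot be read as establishing parts (a)--(c), and it also does not reproduce the paper's actual (computational) evidence.

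Beyond being incomplete, the outline contains one inference that is actually wrong. In part (b) you argue that since $L_\pi(0,0,0,-d)\,\mathsf{w}_\pi$ is homogeneous of degree $0$ in the equivariant parameters (which is correct: $L_\pi$ has degree $|\pi|$ and $\mathsf{w}_\pi$ has degree $-|\pi|$), regularity along $\lambda_1+\lambda_2+\lambda_3=0$ forces the restriction to be ``a rational function of $d$ alone.'' Scaling invariance only says the restriction descends to the projective line inside the plane $\{\lambda_1+\lambda_2+\lambda_3=0\}$; it does not make it constant there (compare $\lambda_1/\lambda_2$, which is homogeneous of degree $0$ and regular on a dense open subset of that plane but far from constant). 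So (b) does not follow from (a) plus homogeneity; it is an independent assertion requiring its own argument. Your identification of the source of the falling factorial in (c) is correct in spirit --- the $\pi_{111}$ boxes over the origin contribute factors $(d-(l-1))\lambda_4$ to $L_\pi(0,0,0,-d)$ with $\lambda_4=-(\lambda_1+\lambda_2+\lambda_3)\to 0$, which must cancel a pole of $\mathsf{w}_\pi$ of order exactly $\pi_{111}$ --- but that this pole has exactly that order, that nothing else survives, and that the resulting constant is positive are precisely the conjectural content. If you want to contribute in the spirit of the paper, the realistic target is to extend the explicit verifications of Proposition \ref{verify conj on poly dep} (or to prove the combinatorial identification $\omega_\pi=\omega_\pi^c$ of Conjecture \ref{compare wpi}), not to present this outline as a proof.
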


Geometrically, the specialization \eqref{specialize} corresponds to taking $X = \mathbb{C}^4$ and $D = \{x_4^d=0\} \subseteq \mathbb{C}^4$. Then $L = \oO(D) \cong \oO \otimes t_4^{-d}$. As we have seen in Proposition \ref{section s intro}, the canonical section of $L^{[n]}$ on $\Hilb^n(\mathbb{C}^4)$ cuts out the sublocus of zero-dimensional subschemes $Z$ contained in $D$. At the level of torus fixed points, we are therefore considering solid partitions $\pi$ of height $\pi_{111} \leqslant d$. This is the geometric motivation for the specialization \eqref{specialize}. 

We have the following evidence for this conjecture:
\begin{prop}[Proposition \ref{verify conj on poly dep}] \label{verify conj on poly dep intro} 
\hfill
\begin{itemize}
\item Conjecture \ref{specconj intro} is true for any solid partition $\pi$ of size $|\pi|\leqslant 6$.
\item Properties (a), (b), and the absolute value of equation \eqref{Lwpieqintro} hold for $d=1$ and any solid partition $\pi$ satisfying $\pi_{111}=1$ (in this case $|\omega_\pi| = 1$).     
\item Properties (a), (b), and the absolute value of equation \eqref{Lwpieqintro} hold for various individual solid partitions of size $\leqslant15$ listed in Appendix A.
\end{itemize}
\end{prop}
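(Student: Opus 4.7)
My approach is computational, built on the same Maple routine already used to verify Conjecture~\ref{affineconj intro} modulo $q^7$. The common pattern across all three bullets is: for each solid partition $\pi$ in the relevant family, compute $L_\pi(d_1,d_2,d_3,d_4)$ and, with the sign fixed by the verified truth of Conjecture~\ref{affineconj intro} up through $|\pi|\leqslant 6$, the weight $\mathsf{w}_\pi$ as rational functions of $\lambda_1,\ldots,\lambda_4,d$; then specialize to $(d_1,d_2,d_3,d_4)=(0,0,0,-d)$ and substitute $\lambda_4=-(\lambda_1+\lambda_2+\lambda_3)$; finally check property (a) (absence of the would-be pole), property (b) (independence of $\lambda_1,\lambda_2,\lambda_3$ of the resulting expression), and the polynomial form of the result in $d$.

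For the first bullet, I would enumerate all solid partitions of size $\leqslant 6$, which is a short explicit list. For each I run the routine, inspect the denominator to confirm (a), substitute $\lambda_4=-(\lambda_1+\lambda_2+\lambda_3)$ to obtain a function of $\lambda_1,\lambda_2,\lambda_3,d$ and confirm it is a constant in $\lambda_1,\lambda_2,\lambda_3$ to get (b), and finally extract the rational number $\omega_\pi$, verify its positivity, and match the full $d$-dependence against $(-1)^{|\pi|}\omega_\pi\prod_{l=1}^{\pi_{111}}(d-(l-1))$ to obtain (c).

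For the second bullet, the condition $\pi_{111}=1$ forces $\pi$ to be of height one, i.e.\ a plane partition. Geometrically this is exactly the situation of Proposition~\ref{section s intro} applied to $D=\{x_4=0\}\subseteq \mathbb{C}^4$: the canonical section of $L^{[n]}=\oO_X(D)^{[n]}$ cuts $\Hilb^n(\mathbb{C}^4)$ down to $\Hilb^n(\mathbb{C}^3)$, and the $T$-fixed points that survive are precisely the plane partitions. I would argue structurally that the specialization $\lambda_1+\lambda_2+\lambda_3=0$ combined with $\lambda_4=-(\lambda_1+\lambda_2+\lambda_3)$ sends $\lambda_4\to 0$, and the factor $e_T(L^{[n]}|_Z)$ at $d=1$ (where $L\cong\oO\otimes t_4^{-1}$) exactly cancels the extra Calabi-Yau $4$-fold $t_4$-direction in the tangent-obstruction complex on $\mathbb{C}^4$. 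What remains is the classical MNOP vertex weight for plane partitions on $\mathbb{C}^3$ times an overall sign $(-1)^{|\pi|}$, which is known to have absolute value $1$ \cite{MNOP, LP, Li}. This gives both (a) and (b) on the whole family, together with $|\omega_\pi|=1$.

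For the third bullet, I would simply apply the Maple routine to each individual partition tabulated in Appendix~A and verify (a), (b) and the absolute value of \eqref{Lwpieqintro}. The main obstacle throughout is computational: $\mathsf{w}_\pi$ is a product over boxes of $\pi$ whose characters, expressed as Laurent polynomials in $t_1,\ldots,t_4$, grow in length with $|\pi|$, and the rational-function simplifications at $\lambda_1+\lambda_2+\lambda_3=0$ balloon rapidly in memory and time; near $|\pi|\approx 15$ this becomes the binding constraint, which is why only selected partitions rather than the full list are treated at that size, and why the unconditional size bound in the first bullet is only $6$. The delicate algorithmic point is to process the box contributions incrementally, cancelling common factors between numerator and $e_T(\Ext^1)$ before substituting $\lambda_4=-(\lambda_1+\lambda_2+\lambda_3)$ and $\lambda_1+\lambda_2+\lambda_3=0$, so that the intermediate expressions never blow up; no new conceptual ingredient beyond the vertex formalism is needed.
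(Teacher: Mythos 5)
Your proposal matches the paper's proof: bullets one and three are verified by running the same Maple routine (with the signs fixed by the verification of Conjecture \ref{affineconj unique}), and bullet two is deduced structurally from the smooth-toric-divisor result (Theorem \ref{thm for equiv sm div} applied to $D=\{x_4=0\}\subseteq\mathbb{C}^4$) together with the MNOP computation of the degree-zero $\mathbb{C}^3$ vertex, which is exactly the reduction you describe. No substantive difference in approach.
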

By combining Conjectures \ref{affineconj intro} and \ref{specconj intro}, we find a formula for enumerating $\omega_\pi$-weighted solid partitions $\pi$. 
\begin{thm}[Theorem \ref{solid partition counting}]\label{solid partition counting intro}
Assume Conjectures \ref{affineconj intro} and \ref{specconj intro} are true. Then
\begin{equation}\label{solid part count}
\sum_{\pi} \omega_\pi\,t^{\,\pi_{111}}\, q^{|\pi|} = e^{t(M(q)-1)},
\end{equation}
where the sum is over all solid partitions, $t$ is a formal parameter, and $M(q)$ denotes the MacMahon function. In particular, for $t=1$ 
\begin{equation}
\sum_{\pi} \omega_\pi\, q^{|\pi|} = e^{M(q)-1}. \nonumber 
\end{equation}
\end{thm}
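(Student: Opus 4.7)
The plan is to combine Conjectures \ref{affineconj intro} and \ref{specconj intro} by performing a double specialization and then extracting coefficients in a convenient polynomial basis. I would first set $d_1=d_2=d_3=0$ and $d_4=-d$ in Conjecture \ref{affineconj intro}, and then further specialize at $\lambda_1+\lambda_2+\lambda_3=0$, which together with the relation $\lambda_1+\lambda_2+\lambda_3+\lambda_4=0$ forces $\lambda_4=0$. A direct computation shows that the exponent of $M(-q)$ on the right-hand side becomes
$$
\frac{d\,(\lambda_1\lambda_2\lambda_3+\lambda_1\lambda_2\lambda_4+\lambda_1\lambda_3\lambda_4+\lambda_2\lambda_3\lambda_4)}{\lambda_1\lambda_2\lambda_3},
$$
which collapses to $d$ after imposing $\lambda_4=0$. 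On the left-hand side, Conjecture \ref{specconj intro}(c) identifies each summand with $(-1)^{|\pi|}\,\omega_\pi\,(d)_{\pi_{111}}$, where $(d)_k := d(d-1)\cdots(d-k+1)$ denotes the falling factorial. Absorbing the sign $(-1)^{|\pi|}$ via the substitution $q \mapsto -q$ yields the identity
$$
\sum_\pi \omega_\pi\,(d)_{\pi_{111}}\,q^{|\pi|} \;=\; M(q)^d \qquad \text{in } \mathbb{Q}[d][\![q]\!].
$$

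Next, I would expand the right-hand side by the binomial theorem. Writing $u := M(q)-1 \in q\,\mathbb{Q}[\![q]\!]$, one has
$$
M(q)^d \;=\; (1+u)^d \;=\; \sum_{k\geqslant 0}\binom{d}{k}\,u^k \;=\; \sum_{k\geqslant 0} \frac{(d)_k}{k!}\,(M(q)-1)^k.
$$
Both sides of the identity above are thus polynomials in $d$ with coefficients in $\mathbb{Q}[\![q]\!]$ expanded in the basis $\{(d)_k\}_{k\geqslant 0}$ of $\mathbb{Q}[d]$. Equating coefficients of $(d)_k$ gives
$$
\sum_{\pi \,:\, \pi_{111}=k} \omega_\pi\,q^{|\pi|} \;=\; \frac{(M(q)-1)^k}{k!}.
$$
Multiplying by $t^k$ and summing over $k\geqslant 0$ then produces
$$
\sum_\pi \omega_\pi\,t^{\pi_{111}}\,q^{|\pi|} \;=\; \sum_{k\geqslant 0} \frac{t^k\,(M(q)-1)^k}{k!} \;=\; e^{t(M(q)-1)},
$$
which is the claimed identity; the specialization $t=1$ gives the second formula.

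The only point needing some care is the regularity of the $\lambda_4 = 0$ specialization: on the combinatorial (left-hand) side this is exactly Conjecture \ref{specconj intro}(a), and on the analytic (right-hand) side it is ensured by the explicit cancellation of $\lambda_4$ in the exponent computed above. Beyond this verification, the proof is a pure rearrangement of formal power series, so I do not anticipate any substantive obstacle — the key conceptual ingredient is simply the passage from the falling-factorial basis to the exponential generating function via the binomial identity $(1+u)^d = \sum_{k} \tfrac{(d)_k}{k!}\,u^k$.
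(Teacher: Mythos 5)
Your proposal is correct and follows essentially the same route as the paper: the same specialization $d_1=d_2=d_3=0$, $d_4=-d$, $\lambda_1+\lambda_2+\lambda_3=0$, the same use of Conjecture \ref{specconj intro} to obtain $\sum_\pi \omega_\pi \prod_{l=1}^{\pi_{111}}(d-(l-1))\,q^{|\pi|}=M(q)^d$, and the same key intermediate identity $\sum_{\pi_{111}=k}\omega_\pi q^{|\pi|}=(M(q)-1)^k/k!$. The only (cosmetic) difference is that you extract this identity in one step by expanding $(1+u)^d=\sum_k \frac{(d)_k}{k!}u^k$ in the falling-factorial basis, whereas the paper inverts the triangular system $1+\sum_{i=1}^{k}\frac{k!}{(k-i)!}\sum_{\pi_{111}=i}\omega_\pi q^{|\pi|}=M(q)^k$ for $k=1,2,3,\dots$ by explicit rearrangement.
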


This theorem inspired us to define an \emph{explicit} combinatorial weight $\omega_\pi^c \in \mathbb{Q}_{>0}$ associated to each solid partition $\pi$ (Definition \ref{combinatoric wpi}). Firstly, we \emph{prove} an unconditional version of Theorem \ref{solid partition counting intro} with $\omega_\pi$ replaced by $\omega_{\pi}^{c}$ (Theorem \ref{combinatorial wpi thm}). Secondly, we conjecture that $\omega_\pi = \omega_\pi^c$ and check this for the cases of Proposition \ref{verify conj on poly dep intro} (Conjecture \ref{compare wpi}, Proposition \ref{evidence compare wpi}). 

The definition of $\omega_\pi^c$ (Definition \ref{combinatoric wpi}) can naturally be extended to $d$-dimensional partitions for any $d\geqslant 0$, where $d=3$ corresponds to the case of solid partitions. 
The proof of Theorem \ref{combinatorial wpi thm} immediately gives
$$
\log \sum_{d\textrm{-partitions } \pi} \omega_\pi^c \, q^{|\pi|} =  \sum_{(d-1)\textrm{-partitions } \pi,\, |\pi|\geqslant1} q^{|\pi|}
$$
and we give a similar formula involving the formal parameter $t$ (Remark \ref{rmk on d dim combinatoric weight}). In a future work \cite{CK2}, we relate this formula to equivariant DT type invariants on $\mathbb{C}^{d+1}$. \\

There is a related work due to Nekrasov \cite{Nekrasov}, where he proposes a conjectural formula for a very general equivariant 
\textit{K-theoretical} partition function on Hilbert schemes of points on $\mathbb{C}^4$. Specializations of his partition function seem related to our Conjecture \ref{affineconj intro}. We briefly discuss a very special instance of his conjecture in Appendix B, where we point out relations to our choices of orientation. As opposed to \cite{Nekrasov}, our study of the $\mathbb{C}^4$ case emerges from first studying the compact case (Conjecture \ref{conj intro}) and subsequently studying the toric analogues (Conjectures \ref{conj intro equi} and \ref{affineconj intro}).

\subsection{Acknowledgement} 

This work was initiated during a visit of the first author to the Mathematical Institute of Utrecht University. He is grateful to the
institute for providing an excellent environment.
Y.~C.~is supported by The Royal Society Newton International Fellowship. 
We are very grateful to Professor Nikita Nekrasov for sending us his preprint and correspondence via e-mails. We also thank the anonymous referee for providing numerous suggestions to improve the exposition of the paper.

\section{The compact case}

Before stating our conjecture for Hilbert schemes of points on smooth projective Calabi-Yau 4-folds, we review the framework of $\mathrm{DT_ 4}$ invariants.

\subsection{Review of $\mathrm{DT_4}$ invariants}\label{sec:review}

Let $X$ be a smooth projective Calabi-Yau 4-fold, i.e.~a smooth projective 4-fold $X$ satisfying $K_X\cong \oO_X$ and $H^i(\oO_X) =0$ for $i=1,2,3$. Let $\omega$ be an ample divisor on $X$ and $v \in H^{\ast}(X, \mathbb{Q})$ a cohomology class.

The coarse moduli space $M_{\omega}(v)$
of $\omega$-Gieseker semistable sheaves
$E$ on $X$ with $\ch(E)=v$ exists as a projective scheme.
We always assume that
$M_{\omega}(v)$ is a fine moduli space, i.e.~any point $[E] \in M_{\omega}(v)$ is stable and
there is a universal family
\begin{align*}
\eE \in \Coh(X \times M_{\omega}(v)).
\end{align*}
In~\cite{BJ, CL}, under certain hypotheses,
the authors construct 
a $\mathrm{DT}_{4}$ virtual
class
\begin{align}\label{virtual}
[M_{\omega}(v)]^{\rm{vir}} \in H_{2-\chi(v, v)}(M_{\omega}(v), \mathbb{Z}), \end{align}
where $\chi(-,-)$ denotes the Euler pairing.
This class is not necessarily algebraic.

Roughly speaking, in order to construct such a class, one chooses at
every point $[E]\in M_{\omega}(v)$, a half-dimensional real subspace of the usual obstruction space
\begin{align*}\Ext_{+}^2(E, E)\subseteq \Ext^2(E, E)\end{align*}
on which the non-degenerate quadratic form $Q$ defined by Serre duality is real and positive definite. 
Then one glues local Kuranishi-type models of the form 
\begin{equation}\kappa_{+}=\pi_+\circ\kappa: \Ext^{1}(E,E)\to \Ext_{+}^{2}(E,E),  \nonumber \end{equation}
where $\kappa$ is a Kuranishi map of $M_{\omega}(v)$ at $E$ and $\pi_+$ is projection 
onto the first factor of 
\begin{equation} \label{real sub}
\Ext^{2}(E,E)=\Ext_{+}^{2}(E,E)\oplus\sqrt{-1}\cdot\Ext_{+}^{2}(E,E).  
\end{equation}

In \cite{CL}, local models are glued in three special cases: 
\begin{enumerate}
\item when $M_{\omega}(v)$ consists of locally free sheaves only; 
\item when $M_{\omega}(v)$ is smooth; 
\item when $M_{\omega}(v)$ is a shifted cotangent bundle of a derived smooth scheme. 
\end{enumerate}
In these cases, the corresponding virtual classes are constructed using either gauge theory or algebro-geometric perfect obstruction theory.

Assuming $M_\omega(v)$ can be given a $(-2)$-shifted symplectic structure, a general gluing construction was given by Borisov-Joyce \cite{BJ} based on Pantev-T\"{o}en-Vaqui\'{e}-Vezzosi's theory of shifted symplectic geometry \cite{PTVV} and Joyce's theory of derived $C^{\infty}$-geometry.
The corresponding virtual class is constructed using Joyce's
D-manifold theory (a machinery similar to Spivak's theory of derived smooth manifolds or Fukaya-Oh-Ohta-Ono's theory of Kuranishi space structures used in defining Lagrangian Floer theory).

To have a better understanding of what $\DT_4$ virtual classes look like, we briefly review the construction in situations (2) and (3) mentioned above:  
\begin{itemize}
\item 
When $M_{\omega}(v)$ is smooth, the obstruction sheaf
$$
\mathrm{Ob} := \mathcal{E}{\it{xt}}_{\pi_M}^2(\mathcal{E},\mathcal{E})
$$ 
is a vector bundle 
%See Banica, Putinar, Schumacher, Satz 3(ii).
on $M_{\omega}(v)$ endowed with a non-degenerate quadratic form $Q$ induced by Serre duality, 
where $\pi_M : X \times M_\omega(v) \rightarrow M_\omega(v)$ denotes projection. 
A family version of \eqref{real sub} defines a real subbundle $\mathrm{Ob}^+ \subseteq \mathrm{Ob}$ on which $Q$ is positive definite and $\mathrm{Ob}\cong \mathrm{Ob}^+ \otimes _{\mathbb{R}} \mathbb{C}$ are isomorphic as vector bundles with quadratic forms \cite[Lem.~5]{EG}. Since $M_\omega(v)$ is smooth, the Zariski tangent space $\Ext^1(E,E)$ at any $[E] \in M_\omega(v)$ has the same dimension as $M_{\omega}(v)$, which implies that the local Kuranishi maps 
are zero. The $\DT_4$ virtual class is given by
\begin{equation} \label{virtforsmooth} [M_{\omega}(v)]^{\rm{vir}}=\mathrm{PD}\big(e(\mathrm{Ob},Q)\big), \end{equation}
where $e(\mathrm{Ob}, Q)$ denotes the half-Euler class of 
$(\mathrm{Ob},Q)$, i.e.~the Euler class of a real subbundle $\mathrm{Ob}^+$ and $\mathrm{PD}(-)$ denotes the 
Poincar\'e dual. Equality \eqref{virtforsmooth} holds up to a sign on each connected component. This sign is determined by the \emph{choice of orientation}, which we review below. Note that the half-Euler class satisfies 
%$e(Ob,Q)=0$ if $\mathrm{rk}(Ob)$ is odd and $e(Ob,Q)^{2}=(-1)^{\frac{\mathrm{rk}(Ob)}{2}}e(Ob)$
\begin{equation}\label{half Euler}
e(\mathrm{Ob},Q)^{2}=(-1)^{\frac{\mathrm{rk}(\mathrm{Ob})}{2}}e(\mathrm{Ob}),  \textrm{ }\mathrm{if}\textrm{ } \mathrm{rk}(\mathrm{Ob})\textrm{ } \mathrm{is}\textrm{ } \mathrm{even}, \end{equation} 
\begin{equation}e(\mathrm{Ob},Q)=0, \textrm{ }\mathrm{if}\textrm{ } \mathrm{rk}(\mathrm{Ob})\textrm{ } \mathrm{is}\textrm{ } \mathrm{odd}. 
\nonumber \end{equation}
\item Suppose $M_{\omega}(v)$ is a shifted cotangent bundle of a derived smooth scheme. Roughly speaking, this means that at any closed point $[F]\in M_{\omega}(v)$, we have a Kuranishi map
\begin{equation}\kappa \colon
 \Ext^{1}(F,F)\to \Ext^{2}(F,F)=V_F\oplus V_F^{*},  \nonumber \end{equation}
which factors through a maximal isotropic subspace $V_F$ of $(\Ext^{2}(F,F),Q)$. Then the $\DT_4$ virtual class of $M_{\omega}(v)$ is, 
roughly speaking, the 
virtual class of the perfect obstruction theory formed by $\{V_F\}_{F\in M_{\omega}(v)}$. 
When $M_{\omega}(v)$ is furthermore smooth as a scheme, 
then it is
simply the Euler class of the vector bundle 
$\{V_F\}_{F\in M_{\omega}(v)}$ over $M_{\omega}(v)$. 
\end{itemize}

\subsection*{On orientations}

In order to construct the above virtual class (\ref{virtual}) with coefficients in $\mathbb{Z}$ (instead of $\mathbb{Z}_2$), we need an orientability result 
for $M_{\omega}(v)$, which is stated as follows.
Let  
\begin{equation}
 \lL:=\mathrm{det}(\dR \hH om_{\pi_M}(\eE, \eE))
 \in \Pic(M_{\omega}(v)) \quad  \nonumber
\end{equation}
be the determinant line bundle of $M_{\omega}(v)$, equipped with the non-degenerate symmetric pairing $Q$ induced by Serre duality.  An \textit{orientation} of 
$(\mathcal{L},Q)$ is a reduction of its structure group from $O(1,\mathbb{C})$ to $SO(1, \mathbb{C})=\{1\}$. In other words, we require a choice of square root of the isomorphism
\begin{equation}Q: \lL\otimes \lL \to \oO_{M_{\omega}(v)}  \nonumber \end{equation}
in order to construct the virtual class (\ref{virtual}). The virtual class \eqref{virtual} depends on the choice of orientation $o(\mathcal{L})$, so we write $[M_{\omega}(v)]^{\rm{vir}}_{o(\mathcal{L})}$ in order to stress this dependence.

An existence result of orientations is proved in \cite[Thm.~2.2]{CL2} for Calabi-Yau 4-folds $X$ such that $\mathrm{Hol}(X)=SU(4)$ and $H^{\rm{odd}}(X,\mathbb{Z})=0$. 
Notice that, if orientations exist, the different choices form a torsor for $H^{0}(M_{\omega}(v),\mathbb{Z}_2)$. 

In particular, when $M_\omega(v)$ is smooth, the choice of orientation on $\lL$ is equivalent to a choice of orientation of a real subbundle $\mathrm{Ob}^+ \subseteq \mathrm{Ob}$. By the homotopy equivalence $O(n,\mathbb{C})\sim O(n,\mathbb{R})$, the real subbundle is unique up to isomorphisms.

\subsection{Conjecture for $\mathrm{DT_{4}}$ invariants of $\Hilb^{n}(X)$}

Let $X$ be a smooth projective Calabi-Yau 4-fold. For a positive integer $n$, we consider the Hilbert scheme $\Hilb^{n}(X)$ of $n$ points on $X$.
It can be identified with the Gieseker moduli space of semistable sheaves with Chern character $(1,0,0,0,-n)\in H^{\mathrm{even}}(X)$,
which is a fine moduli space whose closed points parametrize ideal sheaves of points. 

Given a line bundle $L$ on $X$, we define its tautological bundle $L^{[n]}$ as follows \cite[Sect.~4.1]{Lehn1}
\begin{equation}L^{[n]}:=(\pi_M)_{*}\big(\mathcal{O}_{\mathcal{Z}_n}\otimes \pi_X^{*}L\big),   \nonumber \end{equation}
where $\mathcal{Z}_n\subseteq \Hilb^{n}(X)\times X$ denotes the universal subscheme and 
$\pi_M, \pi_X$ are projections from the product $\Hilb^{n}(X)\times X$ to each factor. Since $\pi_M$ is a flat finite morphism of degree $n$, $L^{[n]}$ is a rank $n$ vector bundle on $\Hilb^{n}(X)$ with fibre $H^0(L|_Z)$ over $Z \in \Hilb^n(X)$. Note that the (real) virtual dimension of $\Hilb^n(X)$ is $2n$ by \eqref{virtual}. Hence we define:
\begin{defi}\label{def of zero dim inv}
Let $X$ be a smooth projective Calabi-Yau 4-fold and $L$ a line bundle on $X$. Assume the determinant line bundle 
$\mathcal{L}$ of $\Hilb^n(X)$, with its non-degenerate quadratic form $Q$ induced from Serre duality, is given an orientation $o(\mathcal{L})$. 
We define 
\begin{equation}\DT_4(X,L,n\,;o(\mathcal{L})):=\int_{[\Hilb^n(X)]^{\mathrm{vir}}_{o(\mathcal{L})}}e(L^{[n]})\in \mathbb{Z},\quad \textrm{if}\textrm{ } n\geqslant1, \nonumber \end{equation}
and $\DT_4(X,L,0\,;o(\mathcal{L})):=1$.  
\end{defi}
We make the following conjecture for the corresponding generating series. 
\begin{conj}\label{conj on zero dim dt4}
Let $X$ be a smooth projective Calabi-Yau 4-fold and $L$ a line bundle on $X$. Then there exist choices of orientation such that 
\begin{equation}\sum_{n=0}^{\infty}\DT_4(X,L,n\,;o(\mathcal{L}))\,q^n=M(-q)^{\int_Xc_1(L) \cdot c_3(X)},  \nonumber \end{equation}
where $M(q)$ 
denotes the MacMahon function.
\end{conj}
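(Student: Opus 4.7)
The plan is to first reduce Conjecture \ref{conj on zero dim dt4} to the geometric case $L = \oO_X(D)$ with $D \subseteq X$ a smooth effective divisor, then exploit the tautological section of $L^{[n]}$ provided by Proposition \ref{section s intro} to identify the DT$_4$ integral with a degree-zero DT$_3$ invariant on $D$, and finally evaluate the latter via the Levine--Pandharipande \cite{LP} and Li \cite{Li} formula. A brief adjunction calculation converts the exponent into the desired expression: since $X$ is Calabi--Yau, $K_D \simeq L|_D$ by adjunction, and $c(T_X)|_D = c(T_D)\cdot c(L|_D)$ gives $c_3(T_D \otimes K_D) = c_3(T_D) + c_2(T_D)\, c_1(L|_D) = c_3(T_X)|_D$, so by the projection formula $\int_D c_3(T_D \otimes K_D) = \int_X c_1(L) \cdot c_3(X)$, matching the exponent on the right-hand side of Conjecture \ref{conj on zero dim dt4}.

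\textbf{Pointwise reduction to DT$_3$ on $D$.} By Proposition \ref{section s intro}, the tautological section $\sigma$ of $L^{[n]}$ vanishes precisely along $\iota : \Hilb^n(D) \hookrightarrow \Hilb^n(X)$. For each $Z \in \Hilb^n(D)$, the short exact sequence of ideal sheaves on $X$, namely $0 \to I_Z(-D) \to I_Z \to \iota_\ast I_{Z/D} \to 0$, relates the DT$_4$ obstruction complex $\dR\Hom_X(I_Z, I_Z)_0[1]$ to the DT$_3$ obstruction complex $\dR\Hom_D(I_{Z/D}, I_{Z/D})_0[1]$. The discrepancy should be captured, up to shifts and Serre duals, by $L^{[n]}|_Z$ together with its dual, arranged in a hyperbolic pattern so that the self-dual piece of $\Ext^2_X(I_Z, I_Z)$ with respect to $Q$ splits as the DT$_3$ obstruction on $D$ plus a Lagrangian pair whose half-Euler class, for the correct orientation, equals $e(L^{[n]}|_Z)$. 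This pointwise picture is already visible in the proof of Theorem \ref{n less 4} for $n \leqslant 3$ (cf.\ Proposition \ref{compare def-obs}), and the task is to globalize it.

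\textbf{Virtual pullback.} Globalization amounts to establishing the excess-intersection identity
$$
\int_{[\Hilb^n(X)]^{\rm{vir}}_{o(\lL)}} e(L^{[n]}) = \int_{[\Hilb^n(D)]^{\rm{vir}}_{\rm{DT_3}}} 1
$$
for a suitable orientation $o(\lL)$. The natural route is to work inside Joyce's D-manifold framework used by Borisov--Joyce \cite{BJ} to construct $[\Hilb^n(X)]^{\rm{vir}}$, and to prove a Kuranishi-style formula expressing the virtual class of the zero locus of an oriented section of a complex vector bundle on a $(-2)$-shifted symplectic D-manifold in terms of the ambient virtual class capped with the Euler class of the bundle. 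An alternative is a Manolache-type virtual pullback adapted to the isotropic-reduction structure of DT$_4$ obstruction theories. Once the identity is in place, summing over $n$ gives $M(-q)^{\int_D c_3(T_D \otimes K_D)}$ by \cite{LP, Li}, and the Chern class computation of the first paragraph concludes the smooth-divisor case.

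\textbf{General $L$ and main obstacle.} To pass from $L = \oO_X(D)$ with smooth effective $D$ to a general line bundle, I would attempt either a universality/cobordism argument in the style of Levine--Pandharipande, showing the generating series is a universal power series in the single integer $\int_X c_1(L) \cdot c_3(X)$, or a deformation argument reducing to the geometric setting. The principal obstacle is the virtual pullback step: excess intersection for a section of an oriented real bundle on a $(-2)$-shifted symplectic derived scheme has not yet been developed in the literature, and one must in addition carefully match the orientation of $\lL$ on $\Hilb^n(X)$ with the canonical orientation of $[\Hilb^n(D)]^{\rm{vir}}_{\rm{DT_3}}$ across all components. As the authors themselves remark after Theorem \ref{n less 4}, the required foundational machinery sits squarely in Joyce's D-manifolds / Kuranishi-atlas framework, and this is where the real work lies.
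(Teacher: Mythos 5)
The statement you are asked to prove is a \emph{conjecture}, and the paper does not prove it: it is only verified for $n\leqslant 3$ when $L=\oO_X(D)$ with $D\subseteq X$ a smooth divisor (Theorem \ref{n less 4}), plus equivariant analogues in the toric setting. Your outline reproduces the paper's own intended strategy --- the tautological section of Proposition \ref{section s}, the comparison of obstruction theories as in Proposition \ref{compare def-obs}, reduction to the $\DT_3$ formula of \cite{LP, Li}, and the Chern class identity $\int_D c_3(T_D\otimes K_D)=\int_X c_1(L)\cdot c_3(X)$ (your adjunction computation is correct, and equivalent to the paper's) --- but it does not close either of the two genuine gaps, so it is a strategy, not a proof.

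Concretely: (1) The ``virtual pullback'' identity $\int_{[\Hilb^n(X)]^{\rm vir}_{o(\lL)}}e(L^{[n]})=\int_{[\Hilb^n(D)]^{\rm vir}}1$ is exactly the step the authors defer to future work. For $n\leqslant 3$ both Hilbert schemes are smooth, the virtual classes are honest Euler classes of obstruction bundles, and the pointwise isotropic-subspace comparison (Propositions \ref{compare def-obs} and \ref{isotropic iso}) globalizes trivially; for general $n$ no excess-intersection or functoriality statement for Borisov--Joyce classes \cite{BJ} exists, and you correctly identify this but do not supply it. Orientation matching across components is an additional unresolved issue here. (2) The reduction from arbitrary $L$ to $L=\oO_X(D)$ with $D$ smooth and effective is also unjustified: a line bundle on a Calabi--Yau 4-fold need not admit any smooth (or even effective) divisor, and the Levine--Pandharipande cobordism argument you invoke relies on deformation invariance and a degeneration package that has not been established for $\DT_4$ invariants --- indeed the presence of orientation choices makes even the formulation of such universality delicate. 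Both gaps are acknowledged in your last paragraph, which is honest, but it means the proposal establishes nothing beyond what Theorem \ref{n less 4} already proves.
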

\begin{rmk} 
When $L=\oO_X$, Conjecture \ref{conj on zero dim dt4} follows from the fact that $\oO_X^{[n]}$ has a nowhere vanishing section which sends $Z$ to $1_{Z}\in H^{0}(X,\oO_Z)$. Then $e(\oO_X^{[n]}) = c_1(\oO_X) = 0$. 
\end{rmk}

\subsection{Geometric motivation of the conjecture}

Let us consider the case when $L=\mathcal{O}_X(D)$ corresponds to an effective divisor $D\subseteq X$. 
The following proposition is similar to \cite[Sect.~A.2]{KT1}\footnote{We thank the anonymous referee for pointing out a proof which is significantly simpler than our original.}.
\begin{prop}\label{section s}
Let $D \subseteq X$ be any effective divisor on a smooth quasi-projective variety $X$ and let $L:=\oO_X(D)$. The rank $n$ vector bundle $L^{[n]}$ on $\Hilb^n(X)$ has a tautological section $\sigma$ whose zero locus is isomorphic to the Hilbert scheme 
$\Hilb^n(D)$ of $n$ points on $D$.
\end{prop}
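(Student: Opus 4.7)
The plan is to build $\sigma$ from the tautological section of $L=\mathcal{O}_X(D)$ and then identify its vanishing locus with $\Hilb^n(D)$ by a functor of points argument. Let $s_D \in H^0(X,L)$ be the canonical section cutting out $D$ scheme-theoretically, and let $\iota\colon \mathcal{Z}_n \hookrightarrow \Hilb^n(X)\times X$ denote the universal subscheme with projections $\pi_M,\pi_X$. Restricting $\pi_X^* s_D$ to $\mathcal{Z}_n$ gives a section of $\iota^*\pi_X^* L$, and since $\pi_M\circ\iota\colon \mathcal{Z}_n \to \Hilb^n(X)$ is finite and flat of degree $n$, adjunction produces
\[
\sigma \;\in\; H^0\bigl(\Hilb^n(X),\,(\pi_M)_*(\mathcal{O}_{\mathcal{Z}_n}\otimes \pi_X^* L)\bigr) \;=\; H^0\bigl(\Hilb^n(X),\,L^{[n]}\bigr).
\]
Fibrewise, at $Z\in\Hilb^n(X)$, this section is simply $s_D|_Z \in H^0(L|_Z)$, which is the expected tautological construction.

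To identify the scheme-theoretic zero locus, I would use Yoneda. For a test scheme $T$, a $T$-point of $\sigma^{-1}(0)$ is a $T$-flat family $\mathcal{Z}_T \subseteq X\times T$ of length-$n$ subschemes, with structure maps $p_T\colon \mathcal{Z}_T\to T$ and $p_X\colon \mathcal{Z}_T\to X$, such that the pulled-back section $\sigma|_T$ vanishes. Since $p_T$ is finite (so $(p_T)_*$ is exact and faithful on sheaves supported on $\mathcal{Z}_T$) and since $\sigma|_T$ is by construction $(p_T)_*(p_X^* s_D|_{\mathcal{Z}_T})$, the adjunction
\[
H^0\bigl(T,\,(p_T)_*(p_X^*L|_{\mathcal{Z}_T})\bigr) \;=\; H^0\bigl(\mathcal{Z}_T,\,p_X^*L|_{\mathcal{Z}_T}\bigr)
\]
shows that $\sigma|_T=0$ if and only if $p_X^* s_D|_{\mathcal{Z}_T}=0$. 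The latter condition is precisely the statement that $\mathcal{Z}_T$ factors scheme-theoretically through the vanishing locus $D\times T$ of $p_X^* s_D$, i.e.\ that $[\mathcal{Z}_T]$ is a $T$-point of $\Hilb^n(D)$. Thus the functors of points of $\sigma^{-1}(0)$ and $\Hilb^n(D)$ agree naturally in $T$, and the isomorphism of schemes follows.

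The main subtlety is keeping track of the scheme structure on $\sigma^{-1}(0)$ rather than only its underlying set: this is exactly why the finite-flat hypothesis on $\pi_M$ is essential, since it makes $(\pi_M)_*$ interchange "section is zero on $\Hilb^n(X)$" with "section is zero on $\mathcal{Z}_n$" without losing any nilpotent information. Once this point is clear, the remaining input—that $D\times T$ is cut out inside $X\times T$ by $p_X^* s_D$ in the correct scheme-theoretic sense—is just the defining property of $L=\mathcal{O}_X(D)$ together with flatness of $\mathcal{Z}_T\to T$, so no hypothesis on smoothness of $D$ or of $\Hilb^n(X)$ is needed and the statement holds for arbitrary effective $D$ on any smooth quasi-projective $X$.
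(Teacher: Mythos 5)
Your proposal is correct and follows essentially the same route as the paper: both construct $\sigma$ by pushing forward the pullback of the defining section $s_D$ along the finite flat universal family, and both identify $Z(\sigma)$ with $\Hilb^n(D)$ by testing against an arbitrary $T$-flat family, using base change and the adjunction/faithfulness of pushforward along the finite morphism to translate ``$f^*\sigma = 0$'' into ``$\mathcal{Z}_T \subseteq T \times D$''. Your explicit remark on why nilpotent information is preserved is a useful amplification of a point the paper leaves implicit, but the argument is the same.
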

\begin{proof}
Consider the universal subscheme
\begin{displaymath}
\xymatrix
{
& \mathcal{Z} \ar_p[ld] \ar^q[dr] \ar@^{(->}[r] &  \Hilb^{n}(X) \times X \\
\Hilb^n(X) & & X.
}
\end{displaymath}
Let $s : D \subseteq X$ be a section defining $D$. We claim that the tautological section $\sigma := p_* q^* s$ of $L^{[n]} = p_*q^*L$ has the required property, i.e.~we have an equality of schemes
$$
Z(\sigma) = \Hilb^n(D).
$$
In order to see this, it suffices to take any $T$-flat family 
\begin{displaymath}
\xymatrix
{
& \mathcal{Z}_T \ar_{p_T}[ld] \ar^{q_T}[dr] \ar@^{(->}[r] &  T \times X \\
T & & X
}
\end{displaymath}
with zero-dimensional length $n$ fibres and prove that
$$
\mathcal{Z}_T \subseteq T \times D \subseteq T \times X
$$
if and only if the corresponding morphism $f : T \rightarrow \Hilb^n(X)$ factors through $Z(\sigma)$. 

Now $f$ factors through $Z(\sigma)$ if and only if $f^*\sigma$ is the zero section of $f^* L^{[n]}$.  Note that $\mathcal{Z}_T = \mathcal{Z} \times_T \Hilb^n(X)$ and
$$
f^* \sigma = f^* p_* q^* s = p_{T*} q_{T}^{*} s.
$$
Therefore $f^* \sigma$ is the zero section if and only if $\mathcal{Z}_T \subseteq T \times D$ as required.
\end{proof}

Let $X$ be a smooth projective Calabi-Yau 4-fold with \emph{smooth} divisor $D \subseteq X$ and let $L = \oO_X(D)$. Ideally, if all moduli spaces are smooth of expected dimensions\,\footnote{Of course, this fantasy situation never occurs.}, i.e.~$\dim_{\mathbb{C}}\Hilb^n(D)=0$ and 
$\dim_{\mathbb{R}}\Hilb^n(X)=2n$, then the section $\sigma$ constructed in Proposition \ref{section s} is transverse to the zero section and we have 
\begin{equation}\int_{[\Hilb^n(X)]^{\mathrm{vir}}}e(L^{[n]})=\int_{[\Hilb^n(D)]^{\mathrm{vir}}}1, \nonumber \end{equation}
modulo a sign coming from the choice of orientation involved in defining the LHS. 
Then Conjecture \ref{conj on zero dim dt4} would follow from the generating 
series of zero-dimensional Donaldson-Thomas invariants of a smooth projective 3-fold $D$ \cite{LP, Li}
\begin{equation} \label{LPLi} \sum_{n=0}^{\infty}\Big(\int_{[\Hilb^n(D)]^{\mathrm{vir}}}1\Big)\,q^{n}=M(-q)^{\int_Dc_3(TD\otimes K_D)}  \nonumber \end{equation}
and equation \eqref{Chernmanip} below.

For later reference, we add the derivation of the equality
\begin{equation} \label{Chernmanip}
\int_D c_3(TD\otimes K_D) = \int_X c_1(L) \cdot c_3(TX).
\end{equation}
Indeed, from the short exact sequence
$$
0 \rightarrow TD \rightarrow TX|_D \rightarrow N_{D/X} \rightarrow 0
$$
and the fact that $N_{D/X} \cong \oO_D(D) \cong K_D$ ($X$ is Calabi-Yau), we obtain
$$
\int_D c(TD\otimes K_D) = \int_X c_1(L) \cdot \frac{c(TX \otimes L)}{c(L \otimes L)},
$$
where $c(-)$ denotes total Chern class. The degree 3 part of the fraction is easily calculated:
$$
c_3(TX) + c_1(TX) \cdot c_1(L)^2 = c_3(TX),
$$
where the last equality again uses the fact that $X$ is Calabi-Yau.

\subsection{Preparation on deformation and obstruction theories}

We need to compare deformation-obstruction theories of $\Hilb^n(X)$ and $\Hilb^n(D)$ in order to verify our conjecture. 
\begin{lem}\label{lem on D/X}
Let $X$ be a smooth projective variety and $i:D\hookrightarrow X$ be a smooth divisor. For any subscheme $Z\subseteq D$, we have a short exact sequence
\begin{equation}\label{SES of ideals} 0\to \mathcal{O}_{X}(-D)\to I_{Z,X} \to i_*I_{Z,D}\to 0 \end{equation}
of coherent sheaves on $X$, where $I_{Z,\star}$ is the ideal sheaf of $Z$ in $\star$ (\,$\star=X$ or $D$). 

Furthermore, if $Z$ is zero-dimensional, we have a long exact sequence
\begin{align}
\begin{split} \label{LES}
0&\to\Ext^{0}_X(i_*I_{Z,D},i_*\mathcal{O}_Z)\to\Ext^{0}_X(I_{Z,X},i_*\mathcal{O}_Z)\to H^{0}(\mathcal{O}_{Z}(D))\to \\
&\to\Ext^{1}_X(i_*I_{Z,D},i_*\mathcal{O}_Z)\to\Ext^{1}_X(I_{Z,X},i_*\mathcal{O}_Z)\to H^{1}(\mathcal{O}_{Z}(D))=0, 
\end{split}
\end{align}
and canonical isomorphisms 
\begin{equation}\Ext^{i}_X(i_*I_{Z,D},i_*\mathcal{O}_Z)\cong\Ext^{i}_X(I_{Z,X},i_*\mathcal{O}_Z)\quad \textrm{for}\textrm{ } i\geqslant2.
\nonumber \end{equation}
\end{lem}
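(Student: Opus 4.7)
The plan splits naturally into three independent pieces matching the three assertions of the lemma.

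\textbf{Step 1: the short exact sequence.} Since $Z \subseteq D$, the ideal sheaf $I_{D,X} = \oO_X(-D)$ is contained in $I_{Z,X}$. I would build the SES \eqref{SES of ideals} by applying the snake lemma to the commutative diagram
\begin{equation*}
\xymatrix{
0 \ar[r] & I_{D,X} \ar[r] \ar@{=}[d] & I_{Z,X} \ar[r] \ar@{^{(}->}[d] & I_{Z,X}/I_{D,X} \ar[r] \ar[d] & 0 \\
0 \ar[r] & I_{D,X} \ar[r] & \oO_X \ar[r] & i_{*}\oO_D \ar[r] & 0,
}
\end{equation*}
and identifying the cokernel $I_{Z,X}/I_{D,X}$ with $i_*I_{Z,D}$ via its inclusion into $i_*\oO_D$. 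Equivalently, one can start from $0 \to i_*I_{Z,D} \to i_*\oO_D \to i_*\oO_Z \to 0$ on $X$ and pull back along $\oO_X \twoheadrightarrow i_*\oO_D$.

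\textbf{Step 2: the long exact sequence.} Apply the contravariant functor $\Hom_X(-, i_*\oO_Z)$ to \eqref{SES of ideals}. The resulting long exact Ext-sequence is the one displayed in \eqref{LES}, once we identify
\begin{equation*}
\Ext^{i}_X\bigl(\oO_X(-D),\, i_*\oO_Z\bigr) \;\cong\; H^{i}\bigl(X,\, i_*\oO_Z \otimes \oO_X(D)\bigr) \;\cong\; H^{i}\bigl(D,\, \oO_Z(D)\bigr),
\end{equation*}
the first isomorphism because $\oO_X(-D)$ is a line bundle (so its higher Ext into any sheaf reduces to sheaf cohomology of the tensor product) and the second by the projection formula for the closed immersion $i : D \hookrightarrow X$.

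\textbf{Step 3: vanishing and the isomorphisms for $i \geq 2$.} Since $Z$ is zero-dimensional, $i_*\oO_Z$ is supported on a finite set, hence so is $\oO_Z(D)$, and therefore
\begin{equation*}
H^{i}\bigl(D,\, \oO_Z(D)\bigr) = 0 \quad \text{for all } i \geq 1.
\end{equation*}
This immediately gives the terminal zero in \eqref{LES} and, extending the same long exact sequence one position further, produces the claimed canonical isomorphisms $\Ext^{i}_X(i_*I_{Z,D}, i_*\oO_Z) \cong \Ext^{i}_X(I_{Z,X}, i_*\oO_Z)$ for every $i \geq 2$.

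None of the three steps presents a real obstacle: the argument is formal once one has the SES of Step 1, and the only input beyond homological algebra is the projection formula together with the triviality of higher cohomology of a finite-length sheaf. The one point worth being careful about is that the SES in Step 1 is an SES of sheaves on $X$ (not on $D$), so that $\oO_X(-D)$ appears as the kernel rather than $\oO_D(-D) = K_D$; this is what makes the Ext computation reduce to cohomology of $\oO_Z(D)$ on $D$ rather than to something involving the normal bundle.
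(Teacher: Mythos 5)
Your proposal is correct and follows essentially the same route as the paper: the short exact sequence is obtained by the same diagram chase among the three standard ideal-sheaf sequences (your snake-lemma diagram is just an explicit version of it), and the rest is the long exact $\Ext$-sequence from applying $\Hom_X(-,i_*\mathcal{O}_Z)$ to \eqref{SES of ideals}, with the identification $\Ext^i_X(\mathcal{O}_X(-D),i_*\mathcal{O}_Z)\cong H^i(\mathcal{O}_Z(D))$ and the vanishing of higher cohomology of the finite-length sheaf $\mathcal{O}_Z(D)$. No gaps.
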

\begin{proof}
Sequence (\ref{SES of ideals}) can be easily deduced from the short exact sequences 
\begin{align*}
0&\to \mathcal{O}_X(-D)\to  \mathcal{O}_X \to  \mathcal{O}_D\to 0, \\
0&\to I_{Z,X}\to  \mathcal{O}_X \to  i_*\mathcal{O}_Z\to 0, \\ 
0&\to I_{Z,D}\to  \mathcal{O}_D \to  \mathcal{O}_Z\to 0, 
\end{align*}
and diagram chasing.
Applying $\RHom_X(-,i_*\mathcal{O}_Z)$ to (\ref{SES of ideals}), we get a distinguished triangle 
\begin{equation}\RHom_X(i_*I_{Z,D},i_*\mathcal{O}_Z)\to\RHom_X(I_{Z,X},i_*\mathcal{O}_Z)\to\RHom_X(\mathcal{O}_{X}(-D),i_*\mathcal{O}_Z), \nonumber \end{equation}
whose cohomology gives the long exact sequence (\ref{LES}) and the desired canonical isomorphisms because $Z$ is zero-dimensional.
\end{proof}
\begin{lem}\label{def-obs on 3-fold}
Let $X$ be a smooth projective variety with $\dim_\mathbb{C}(X)\geqslant 3$ and let $L\to X$ be a line bundle on $X$. 
For any zero-dimensional subscheme $Z\subseteq X$,
we have canonical isomorphisms
\begin{align*}
&\Ext^{1}_X(I_{Z,X},I_{Z,X}\otimes L)_0\cong \Hom_X(I_{Z,X},\mathcal{O}_Z\otimes L)\cong\Ext^{1}_X(\mathcal{O}_Z,\mathcal{O}_Z\otimes L), \\
&\Ext^{2}_X(I_{Z,X},I_{Z,X}\otimes L)_0\cong \Ext^1_X(I_{Z,X},\mathcal{O}_Z\otimes L)\cong\Ext^{2}_X(\mathcal{O}_Z,\mathcal{O}_Z\otimes L).
\end{align*}
\end{lem}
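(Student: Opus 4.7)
The plan is to derive both isomorphisms from long exact sequences of $\Ext$ applied to the basic short exact sequence $0\to I_{Z,X}\to \oO_X\to \oO_Z\to 0$ and its tensor product with $L$, using the zero-dimensionality of $Z$ (so $H^i(\oO_Z\otimes L)=0$ for $i\geq 1$) and the Cohen--Macaulay vanishing $\Ext^j_X(\oO_Z,-)=0$ for $j<n:=\dim X$ to collapse enough terms.

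For the second isomorphism in each line I apply $\Hom_X(-,\oO_Z\otimes L)$ to the basic sequence. The vanishing $\Ext^i_X(\oO_X,\oO_Z\otimes L)=H^i(\oO_Z\otimes L)=0$ for $i\geq 1$, together with the fact that $\Hom(\oO_Z,\oO_Z\otimes L)\to \Hom(\oO_X,\oO_Z\otimes L)$ is just the identity on $H^0(\oO_Z\otimes L)$, collapses the long exact sequence to canonical isomorphisms
$$\Ext^{i-1}_X(I_{Z,X},\oO_Z\otimes L)\cong \Ext^{i}_X(\oO_Z,\oO_Z\otimes L)\quad \text{for all } i\geq 1,$$
which handles the cases $i=1,2$.

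For the first isomorphism I apply $\Hom_X(I_{Z,X},-)$ to the tensored sequence $0\to I_{Z,X}\otimes L\to L\to \oO_Z\otimes L\to 0$. Two inputs control the resulting LES. First, the LES for $\Hom_X(-,L)$ on the basic sequence, together with $\Ext^j_X(\oO_Z,L)=0$ for $j<n$, gives $\Ext^i_X(I_{Z,X},L)\cong H^i(L)$ for $i=0,1$ (and an injection $H^2(L)\hookrightarrow \Ext^2_X(I_{Z,X},L)$ in general, an iso when $n\geq 4$). Second, the connecting maps $\Ext^i_X(I_{Z,X},L)\to \Ext^i_X(I_{Z,X},\oO_Z\otimes L)$ vanish for $i=0,1$: for $i=1$ this follows from the commutative square comparing with $\Ext^i_X(\oO_X,-)$ and $H^1(\oO_Z\otimes L)=0$; for $i=0$ one checks directly that $I_{Z,X}\hookrightarrow \oO_X\xrightarrow{s}L\to \oO_Z\otimes L$ factors through $I_{Z,X}\to \oO_X\to \oO_Z=0$. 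Plugging these into the LES yields
\begin{align*}
0&\to \Hom_X(I_{Z,X},\oO_Z\otimes L)\to \Ext^1_X(I_{Z,X},I_{Z,X}\otimes L)\to H^1(L)\to 0,\\
0&\to \Ext^1_X(I_{Z,X},\oO_Z\otimes L)\to \Ext^2_X(I_{Z,X},I_{Z,X}\otimes L)\to \Ext^2_X(I_{Z,X},L).
\end{align*}

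To identify the left-hand terms with the trace-free parts $\Ext^i_X(I_{Z,X},I_{Z,X}\otimes L)_0$, I verify that the composition $H^i(L)\xrightarrow{\alpha\mapsto \alpha\cdot \id_{I_{Z,X}}}\Ext^i_X(I_{Z,X},I_{Z,X}\otimes L)\to \Ext^i_X(I_{Z,X},L)$ coincides with the natural (injective) map $H^i(L)\hookrightarrow \Ext^i_X(I_{Z,X},L)$; this is a routine Yoneda/associativity-of-tensor check. It follows that the LES map provides a splitting of the trace and its kernel is the trace-free part. The main obstacle I anticipate is the case $n=3$, $i=2$, where $\Ext^2_X(I_{Z,X},L)$ strictly contains $H^2(L)$ so the LES map is not surjective onto $H^2(L)$: here I would use the injectivity $H^2(L)\hookrightarrow \Ext^2_X(I_{Z,X},L)$ to conclude that an element of $\Ext^2_X(I_{Z,X},I_{Z,X}\otimes L)$ is trace-free if and only if it lies in the kernel of the LES map, completing the identification $\Ext^2_X(I_{Z,X},I_{Z,X}\otimes L)_0\cong \Ext^1_X(I_{Z,X},\oO_Z\otimes L)$.
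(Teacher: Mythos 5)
Your derivation of the second isomorphisms, and your long exact sequence for $\Hom_X(I_{Z,X},-)$ applied to $0\to I_{Z,X}\otimes L\to L\to\oO_Z\otimes L\to 0$ together with the vanishing of the two connecting maps, are correct and amount to the same triangles the paper uses (the middle row and right column of its diagram \eqref{tracediag}). The problem is the step where you identify the left-hand terms of your two short exact sequences with the trace-free parts. What you actually verify is the ``unit'' compatibility $p\circ u=j$, where $u=(\,\cdot\,\id_{I_{Z,X}}):H^i(L)\to\Ext^i_X(I_{Z,X},I_{Z,X}\otimes L)$, $p$ is the map to $\Ext^i_X(I_{Z,X},L)$ in your LES, and $j:H^i(L)\to\Ext^i_X(I_{Z,X},L)$ is the natural map. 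When $j$ is an isomorphism (namely $i=1$ for all $n\geqslant 3$, and $i=2$ for $n\geqslant 4$) this does suffice: $j^{-1}\circ p$ is then a retraction of $u$, so $\ker p$ is a canonical complement of $\im u$ and hence canonically isomorphic to the trace-free part. But in the one remaining case, $n=3$ and $i=2$ --- which is exactly the case the paper needs for the divisor $D$ --- your argument breaks down. There $j$ is injective but not surjective (its cokernel sits inside $\Ext^3_X(\oO_Z,L)\neq 0$), and the equivalence you assert, ``trace-free iff in $\ker p$'', requires knowing that $p$ \emph{annihilates} trace-free classes, i.e.\ the full factorization $p=j\circ\tr$, not merely $p\circ u=j$. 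Injectivity of $H^2(L)\hookrightarrow\Ext^2_X(I_{Z,X},L)$ gives you nothing here: without $p(\ker\tr)=0$ you cannot exclude that $\ker p$ is strictly smaller than $\Ext^{2}_X(I_{Z,X},I_{Z,X}\otimes L)_0$, and $\ker p$ is precisely what your LES identifies with $\Ext^1_X(I_{Z,X},\oO_Z\otimes L)$.

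The paper never needs this lower compatibility: it takes cones of the commutative square (using only $p\circ u=j$ and the octahedral axiom) to obtain the distinguished triangle $\RHom_X(I_{Z,X},\oO_Z\otimes L)\to\RHom_X(I_{Z,X},I_{Z,X}\otimes L)_0[1]\to\RHom_X(\oO_Z,L)[2]$, and then, for $n=3$, closes the resulting long exact sequence with two inputs absent from your proposal: the vanishing $\Ext^3_X(I_{Z,X},I_{Z,X}\otimes L)_0=0$ (Serre duality plus the fact that the trace is an isomorphism on $\Ext^0$ because $Z$ has codimension $>1$) and the dimension count $\dim_{\mathbb{C}}\Ext^3_X(\oO_Z,L)=\dim_{\mathbb{C}}\Ext^2_X(I_{Z,X},\oO_Z\otimes L)=\mathrm{length}(Z)$, which together force the connecting map out of $\Ext^2_X(I_{Z,X},I_{Z,X}\otimes L)_0$ to vanish. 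To repair your route you must either prove $p=j\circ\tr$ honestly (it is true, but it is a statement about the trace on all of $\Ext^2$, not just on the image of $u$, and your ``routine Yoneda check'' does not reach it), or import the paper's cone construction together with these two additional facts.
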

\begin{proof}
We apply $\RHom_X(-,\mathcal{O}_Z\otimes L)$ to $0\to I_{Z,X}\to \mathcal{O}_X\to \mathcal{O}_Z\to 0$ and get the long exact sequence
\begin{align*}
0&\to \Hom_X(\mathcal{O}_Z,\mathcal{O}_Z\otimes L)\to \Hom_X(\mathcal{O}_X,\mathcal{O}_Z\otimes L)\to \Hom_X(I_{Z,X},\mathcal{O}_Z\otimes L)\to \\
&\to \Ext^1_X(\mathcal{O}_Z,\mathcal{O}_Z\otimes L)\to \Ext^1_X(\mathcal{O}_X,\mathcal{O}_Z\otimes L)\to \Ext^1_X(I_{Z,X},\mathcal{O}_Z\otimes L)\to \\
&\to \Ext^2_X(\mathcal{O}_Z,\mathcal{O}_Z\otimes L)\to \Ext^2_X(\mathcal{O}_X,\mathcal{O}_Z\otimes L)\to \Ext^2_X(I_{Z,X},\mathcal{O}_Z\otimes L)\to\cdots. 
\end{align*}
Since $\Hom_X(\mathcal{O}_Z,\mathcal{O}_Z\otimes L)\cong\Hom_X(\mathcal{O}_X,\mathcal{O}_Z\otimes L)$ and $H^{>0}(X,\mathcal{O}_Z\otimes L)=0$
for zero-dimensional subschemes $Z\subseteq X$, we obtain isomorphisms
\begin{equation}\label{iso1 on D}\Ext^{i}_X(I_{Z,X},\mathcal{O}_Z\otimes L)\cong \Ext^{i+1}_X(\mathcal{O}_Z,\mathcal{O}_Z\otimes L)\quad 
\textrm{ }\textrm{for}\textrm{ }i\geqslant0. \end{equation} 
In particular, for $\mathrm{dim}_{\mathbb{C}}(X) = 3$, we obtain 
\begin{equation} \label{dim3eqn}
\dim_{\mathbb{C}}\Ext^{2}_X(I_{Z,X},\mathcal{O}_Z\otimes L)=\dim_{\mathbb{C}}\Ext^{0}_X(\mathcal{O}_Z,\mathcal{O}_Z),
\end{equation}
where we used Serre duality $\Ext^{3}_X(\mathcal{O}_Z,\mathcal{O}_Z) \cong \Ext^{0}_X(\mathcal{O}_Z,\mathcal{O}_Z \otimes K_X)^*$. We will use this later.

Next we consider the following commutative diagram 
\begin{align}
\begin{split} \label{tracediag}
\xymatrix{    &  \dR \Gamma(L)[1] \ar@{=}[r]\ar[d] &
\dR \Gamma(L)[1] \ar[d] \\
\RHom_X(I_{Z,X}, \mathcal{O}_Z\otimes L) \ar[r] & \RHom_X(I_{Z,X}, I_{Z,X}\otimes L)[1] \ar[r] \ar[d] &
\RHom_X(I_{Z,X}, L)[1] \ar[d] \\
&  \RHom_X(I_{Z,X}, I_{Z,X}\otimes L)_0[1]  &  \RHom_X(\mathcal{O}_{Z}, L)[2], }
\end{split}
\end{align}
where the horizontal and vertical rows are distinguished triangles.
By taking cones, we obtain a distinguished triangle
\begin{equation*}
\RHom_X(I_{Z,X}, \mathcal{O}_Z\otimes L) \to \RHom_X(I_{Z,X}, I_{Z,X}\otimes L)_0[1] \to
\RHom_X(\mathcal{O}_Z, L)[2].
\end{equation*}
The long exact sequence of its cohomology gives an isomorphism
\begin{equation}\Ext^{1}_X(I_{Z,X},I_{Z,X}\otimes L)_0\cong \Hom_X(I_{Z,X},\mathcal{O}_Z\otimes L), \nonumber \end{equation}
where we used $\Ext^2_X(\oO_Z,L) \cong H^{n-2}(X,\mathcal{O}_Z\otimes K_X\otimes L^{-1}) = 0$ because $n=\mathrm{dim}_{\mathbb{C}}(X) \geqslant 3$ and similarly $\Ext^1_X(\oO_Z,L)=0$. Furthermore, we obtain an exact sequence
\begin{align}
\begin{split} \label{longexact}
0&\to\Ext^{1}_X(I_{Z,X}, \mathcal{O}_Z\otimes L)\to \Ext^{2}_X(I_{Z,X}, I_{Z,X}\otimes L)_0 \to \Ext^{3}_X(\mathcal{O}_Z,L)\to  \\
&\to \Ext^{2}_X(I_{Z,X}, \mathcal{O}_Z\otimes L)  \to\Ext^{3}_X(I_{Z,X}, I_{Z,X}\otimes L)_0 \to\cdots.  
\end{split}
\end{align}
When $\dim_\mathbb{C}(X)\geqslant 4$, $\Ext^{3}_X(\mathcal{O}_Z,L)\cong H^{n-3}(X,\mathcal{O}_Z\otimes K_X\otimes L^{-1})^*=0$ and we are done. 

When $\dim_\mathbb{C}(X)=3$, the trace map $\Ext^{0}_X(I_{Z,X}, I_{Z,X}\otimes L')\cong H^{0}(X,L')$ is an isomorphism for any line bundle $L'$ because $Z$ has codimension $>1$ (cf.~\cite[I, proof of Lem.~2]{MNOP}). Hence $\Ext^{3}_X(I_{Z,X}, I_{Z,X}\otimes L)_0=0$. Furthermore
\begin{align*} 
\dim_{\mathbb{C}} \Ext^{3}_X(\mathcal{O}_Z,L) &= \dim_{\mathbb{C}} H^0(X,\oO_Z) \\
&= \dim_{\mathbb{C}} \Ext^{0}_X(\mathcal{O}_Z,\mathcal{O}_Z) \\
&= \dim_{\mathbb{C}}\Ext^{2}_X(I_{Z,X}, \mathcal{O}_Z\otimes L),
\end{align*}
where the second equality uses $\Hom_X(\mathcal{O}_Z,\mathcal{O}_Z)\cong\Hom_X(\mathcal{O}_X,\mathcal{O}_Z)$ and the third equality uses \eqref{dim3eqn}. The exact sequence \eqref{longexact} yields the desired isomorphism
\begin{equation*}
\Ext^1_X(I_{Z,X},\mathcal{O}_Z\otimes L)\cong\Ext^{2}_X(I_{Z,X},I_{Z,X}\otimes L)_0. \qedhere \nonumber 
\end{equation*}
\end{proof} 

In the following lemma, we focus attention on $\Hilb^n(X)$, where $X$ is a smooth projective Calabi-Yau 4-fold and $n\leqslant3$. We recall that for any smooth projective variety $Y$ and $n\leqslant3$, the Hilbert scheme $\Hilb^n(Y)$ is smooth of dimension $\dim_{\mathbb{C}}(Y)\cdot n$ (e.g. \cite{Lehn2}). 
In fact, for a subscheme $Z$ of length $n \leqslant3$,  Lemma \ref{def-obs on 3-fold} implies
\begin{align*}
\dim_{\mathbb{C}}\Ext^1_X(I_{Z,X},I_{Z,X})_0=\dim_{\mathbb{C}}\Ext_X^0(I_{Z,X},\oO_Z) = 4n, \\
\dim_{\mathbb{C}}\Ext^1_D(I_{Z,D},I_{Z,D})_0=\dim_{\mathbb{C}}\Ext_D^0(I_{Z,D},\oO_Z) = 3n. 
\end{align*}
\begin{lem}\label{def-obs on 4-fold}
Let $X$ be a smooth projective Calabi-Yau 4-fold and let $i:D\hookrightarrow X$ be a smooth divisor. For any zero-dimensional subscheme $Z\subseteq D$
of length $\leqslant3$,
the exact sequence (\ref{LES}) in Lemma \ref{lem on D/X} breaks into an exact sequence and a canonical isomorphism
\begin{equation}0\to\Ext^{0}_X(i_*I_{Z,D},i_*\mathcal{O}_Z)\to\Ext^{0}_X(I_{Z,X},i_*\mathcal{O}_Z)\to H^{0}(\mathcal{O}_{Z}(D))\to 
0,  \nonumber \end{equation}
\begin{equation}\Ext^{1}_X(i_*I_{Z,D},i_*\mathcal{O}_Z)\cong \Ext^{1}_X(I_{Z,X},i_*\mathcal{O}_Z). \nonumber \end{equation}
Furthermore, using the isomorphism $\Ext^{1}_X(I_{Z,X},i_*\mathcal{O}_Z)\cong\Ext^{2}_X(I_{Z,X},I_{Z,X})_0$ of Lemma \ref{def-obs on 3-fold}, 
we obtain a canonical inclusion (constructed in the proof)
\begin{equation}\Ext^1_D(I_{Z,D},\mathcal{O}_Z)\hookrightarrow \Ext^{2}_X(I_{Z,X},I_{Z,X})_0 \nonumber \end{equation}
of a half-dimensional subspace which is isotropic with respect to the non-degenerate quadratic form $Q$ on $\Ext^{2}_X(I_{Z,X},I_{Z,X})_0$ defined by Serre duality.
\end{lem}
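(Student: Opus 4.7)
The plan is to reduce the first assertion to a dimension count, realise the inclusion in the second assertion via a Koszul decomposition of $\RHom_X(i_*(-), i_*(-))$, and deduce isotropy from the compatibility of Serre duality on $X$ and on $D$.

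For the first assertion, Lemma \ref{def-obs on 3-fold} (applied on $X$ and on $D$, each with $L = \mathcal{O}$) identifies $\Hom_X(I_{Z,X}, \mathcal{O}_Z)$ and $\Hom_D(I_{Z,D}, \mathcal{O}_Z)$ with the tangent spaces at $Z$ of $\Hilb^n(X)$ and $\Hilb^n(D)$; these are smooth of complex dimensions $4n$ and $3n$ respectively since $n \leqslant 3$. Combined with $\dim H^0(\mathcal{O}_Z(D)) = n$ and $H^1(\mathcal{O}_Z(D)) = 0$, exactness of the opening part of \eqref{LES} together with $3n - 4n + n = 0$ forces the map $\Ext^0_X(I_{Z,X}, i_*\mathcal{O}_Z) \to H^0(\mathcal{O}_Z(D))$ to be surjective. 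Hence the connecting morphism $H^0(\mathcal{O}_Z(D)) \to \Ext^1_X(i_*I_{Z,D}, i_*\mathcal{O}_Z)$ vanishes, yielding the stated short exact sequence and isomorphism.

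For the inclusion, I would apply the closed-immersion adjunction $\RHom_X(i_*\mathcal{F}, \mathcal{G}) = \RHom_D(\mathcal{F}, i^!\mathcal{G})$ with $i^!\mathcal{G} = \mathbf{L}i^*\mathcal{G} \otimes \mathcal{O}_D(D)[-1]$. Tensoring the Koszul resolution $[\mathcal{O}_X(-D) \xrightarrow{s} \mathcal{O}_X]$ of $i_*\mathcal{O}_D$ with $i_*\mathcal{O}_Z$ gives $\mathbf{L}i^*i_*\mathcal{O}_Z \cong \mathcal{O}_Z \oplus \mathcal{O}_Z(-D)[1]$, since the differential $s$ restricts to zero on $Z \subseteq D$. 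This yields a canonical decomposition
\[
\Ext^1_X(i_*I_{Z,D}, i_*\mathcal{O}_Z) \cong \underbrace{\Ext^1_D(I_{Z,D}, \mathcal{O}_Z)}_{A} \oplus \underbrace{\Hom_D(I_{Z,D}, \mathcal{O}_Z(D))}_{B},
\]
in which $A$ is realised as the image of the pushforward $i_* \colon \Ext^1_D \to \Ext^1_X$. Composing with the isomorphism from the first assertion and with Lemma \ref{def-obs on 3-fold} on $X$ (for $L = \mathcal{O}_X$) produces the desired inclusion $A \hookrightarrow \Ext^2_X(I_{Z,X}, I_{Z,X})_0$. Half-dimensionality follows from $B \cong A^*$, which in turn comes from Serre duality on $D$ combined with two applications of Lemma \ref{def-obs on 3-fold} (first with $L = K_D = \mathcal{O}_D(D)$, then with $L = \mathcal{O}_D$).

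The main difficulty is isotropy of $A$ under the Serre pairing $Q$. Running the same Koszul calculation on the Serre-dual side gives $\Ext^3_X(i_*\mathcal{O}_Z, i_*I_{Z,D}) \cong \Ext^3_D(\mathcal{O}_Z, I_{Z,D}) \oplus \Ext^2_D(\mathcal{O}_Z, I_{Z,D}(D))$, and Serre duality on $D$ identifies the two summands (in that order) with $B^*$ and $A^*$. Together with Serre duality on $X$ (valid since $K_X = \mathcal{O}_X$), the expected upshot is that $Q$, transported to $A \oplus B$ via the above identifications, agrees with the canonical hyperbolic pairing $\bigl((a,b),(a',b')\bigr) \mapsto \langle a, b'\rangle_D + \langle b, a'\rangle_D$ (up to sign); in particular both $A$ and $B$ are isotropic. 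The technical content that I expect to be the main obstacle is verifying that the various identifications intertwine the Serre pairings, which requires carefully tracking shifts and signs through the adjunction $i_* \dashv i^!$ (equivalently, $i^!\omega_X[4] = \omega_D[3]$), the extension $0 \to \mathcal{O}_X(-D) \to I_{Z,X} \to i_*I_{Z,D} \to 0$, and the Yoneda composition.
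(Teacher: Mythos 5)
Your treatment of the first two assertions is sound and is essentially the paper's argument in different packaging: the paper derives the splitting of \eqref{LES} and the half-dimensional inclusion from the spectral sequence $\Ext^p_D(I_{Z,D},\mathcal{O}_Z\otimes\wedge^qK_D)\Rightarrow\Ext^{p+q}_X(i_*I_{Z,D},i_*\mathcal{O}_Z)$ together with the dimension counts $3n,4n,3n,6n$ (the last two via Hirzebruch--Riemann--Roch), whereas you get the same information from the adjunction $i_*\dashv i^!$ and the Koszul computation $\mathbf{L}i^*i_*\mathcal{O}_Z\cong\mathcal{O}_Z\oplus\mathcal{O}_Z(-D)[1]$; your route even yields a canonical splitting $A\oplus B$ rather than merely a short exact sequence, and your identification $B\cong A^*$ replaces the paper's explicit computation of $\dim\Ext^1_X(I_{Z,X},\mathcal{O}_Z)=6n$. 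Up to this point the proposal is fine.

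The isotropy of $A$, however, is left as a genuine gap: you state only the ``expected upshot'' that $Q$ transported to $A\oplus B$ is the hyperbolic pairing, and you explicitly defer the verification that the adjunction, the Koszul splitting, and the extension $0\to\mathcal{O}_X(-D)\to I_{Z,X}\to i_*I_{Z,D}\to 0$ intertwine the Serre pairings. That verification is precisely the content of the assertion (and the part needed for Proposition \ref{compare def-obs} and Theorem \ref{n less 4}), so without it the proof is incomplete. The paper avoids this bookkeeping entirely by a Yoneda-composition argument: the image of $u\in\Ext^1_D(I_{Z,D},\mathcal{O}_Z)$ in $\Ext^2_X(I_{Z,X},I_{Z,X})_0$ is the composite $I_{Z,X}\xrightarrow{\alpha}i_*I_{Z,D}\xrightarrow{i_*u}i_*\mathcal{O}_Z[1]\xrightarrow{\beta}I_{Z,X}[2]$, so $Q(u,u')$ is computed by the six-fold composition of two such classes. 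The key observation is that the ``turnaround'' $\alpha[2]\circ\beta\in\Ext^1_X(i_*\mathcal{O}_Z,i_*I_{Z,D})$ is of the form $i_*\gamma$ for some $\gamma\in\Ext^1_D(\mathcal{O}_Z,I_{Z,D})$, because $\Ext^1_X(i_*\mathcal{O}_Z,i_*I_{Z,D})\cong\Ext^1_D(\mathcal{O}_Z,I_{Z,D})$ — this follows from the same spectral sequence together with the vanishing $\Ext^0_D(\mathcal{O}_Z,I_{Z,D}\otimes K_D)\cong\Ext^3_D(I_{Z,D},\mathcal{O}_Z)^*=0$. Hence the whole composition is the pushforward of $u'[2]\circ\gamma[1]\circ u\in\Ext^3_D(I_{Z,D},\mathcal{O}_Z)=0$, giving $Q(u,u')=0$ with no sign-tracking required. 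If you wish to salvage your hyperbolic-pairing strategy you must actually carry out the compatibility check you flag as the main obstacle; otherwise you should substitute an argument of the above type, which reduces isotropy to a single vanishing on the $3$-fold $D$.
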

\begin{proof}
In the proof, we will use the following dimensions 
\begin{align} 
\begin{split} \label{dimensions}
\dim_{\mathbb{C}}\Ext^0_D(I_{Z,D},\mathcal{O}_Z)=3n, \quad \dim_{\mathbb{C}}\Ext^0_X(I_{Z,X},\mathcal{O}_Z)=4n, \\
\dim_{\mathbb{C}}\Ext^1_D(I_{Z,D},\mathcal{O}_Z)=3n. \quad \dim_{\mathbb{C}}\Ext^1_X(I_{Z,X},\mathcal{O}_Z)=6n. 
\end{split}
\end{align}
The first line follows from the fact that $\Hilb^n(X)$ and $\Hilb^n(D)$ are smooth for $n\leqslant 3$ and these are exactly the Zariski tangent spaces at $Z$. The second line can be seen in several ways. Firstly $\Ext^1_D(I_{Z,D},\mathcal{O}_Z) \cong \Ext^2_D(I_{Z,D},I_{Z,D})_0$ and $\Ext^1_X(I_{Z,X},\mathcal{O}_Z) \cong \Ext^2_X(I_{Z,X},I_{Z,X})_0$ by Lemma \ref{def-obs on 3-fold}, so it suffices to calculate the dimensions of the latter. By Hirzebruch-Riemann-Roch on $D$ we have
\begin{align*}
0 = \chi(\oO_D) - \chi(I_{Z,D}, I_{Z,D}) &= \mathrm{dim}_{\mathbb{C}}  \Ext^1_D(I_{Z,D},I_{Z,D})_0 - \mathrm{dim}_{\mathbb{C}}  \Ext^2_D(I_{Z,D},I_{Z,D})_0 \\
&= 3n - \mathrm{dim}_{\mathbb{C}}  \Ext^2_D(I_{Z,D},I_{Z,D})_0.
\end{align*}
By Hirzebruch-Riemann-Roch and Serre duality on $X$ we have
\begin{align*}
2n = \chi(\oO_X) - \chi(I_{Z,X}, I_{Z,X}) &= 2 \mathrm{dim}_{\mathbb{C}}  \Ext^1_X(I_{Z,X},I_{Z,X})_0 - \mathrm{dim}_{\mathbb{C}}  \Ext^2_X(I_{Z,X},I_{Z,X})_0 \\
&= 8n - \mathrm{dim}_{\mathbb{C}}  \Ext^2_X(I_{Z,X},I_{Z,X})_0.
\end{align*}
This establishes \eqref{dimensions}.

The spectral sequence 
\begin{equation}E^{p,q}_2=\Ext^p_D(I_{Z,D},\mathcal{O}_Z\otimes\wedge^{q}K_D)\Rightarrow \Ext^{p+q}_X(i_*I_{Z,D},i_*\mathcal{O}_Z)
\nonumber \end{equation}
gives an isomorphism 
\begin{equation}\label{iso0.5 on X}\Ext^0_D(I_{Z,D},\mathcal{O}_Z)\cong \Ext^{0}_X(i_*I_{Z,D},i_*\mathcal{O}_Z) \end{equation}
and an exact sequence
\begin{align}
\begin{split} \label{iso0.6 on X}
0&\to \Ext^1_D(I_{Z,D},\mathcal{O}_Z)\to \Ext^{1}_X(i_*I_{Z,D},i_*\mathcal{O}_Z) \to \Ext^0_D(I_{Z,D},\mathcal{O}_Z\otimes K_D)  \to \\
&\to \Ext^2_D(I_{Z,D},\mathcal{O}_Z)\to \Ext^{2}_X(i_*I_{Z,D},i_*\mathcal{O}_Z) \to \Ext^1_D(I_{Z,D},\mathcal{O}_Z\otimes K_D) \to 0,
\end{split}
\end{align}
where we use $\Ext^3_D(I_{Z,D},\mathcal{O}_Z)=0$ (see (\ref{iso1 on D})).

Combining \eqref{dimensions} and \eqref{iso0.5 on X}, we know the exact sequence (\ref{LES}) in Lemma \ref{lem on D/X} breaks into a short exact sequence and a canonical isomorphism
\begin{equation}0\to\Ext^{0}_X(i_*I_{Z,D},i_*\mathcal{O}_Z)\to\Ext^{0}_X(I_{Z,X},i_*\mathcal{O}_Z)\to H^{0}(\mathcal{O}_{Z}(D))\to 
0,  \nonumber \end{equation}
\begin{equation}\label{iso1 on X}\Ext^{1}_X(i_*I_{Z,D},i_*\mathcal{O}_Z)\cong \Ext^{1}_X(I_{Z,X},i_*\mathcal{O}_Z). \end{equation}
In particular, $\dim_{\mathbb{C}}\Ext^{1}_X(i_*I_{Z,D},i_*\mathcal{O}_Z)=6n$ by \eqref{dimensions}.  Therefore \eqref{dimensions} implies that the six term exact sequence \eqref{iso0.6 on X} splits into two short exact sequences and we obtain
\begin{equation}\label{iso2 on X} 0\to \Ext^1_D(I_{Z,D},\mathcal{O}_Z)\to \Ext^{1}_X(i_*I_{Z,D},i_*\mathcal{O}_Z) \to
\Ext^0_D(I_{Z,D},\mathcal{O}_Z\otimes K_D)  \to 0.  \end{equation}

Together (\ref{iso1 on X}) and (\ref{iso2 on X}) provide an inclusion 
\begin{equation}\Ext^1_D(I_{Z,D},\mathcal{O}_Z) \hookrightarrow \Ext^{1}_X(I_{Z,X},i_*\mathcal{O}_Z) \cong \Ext^{2}_X(I_{Z,X},I_{Z,X})_0, \nonumber \end{equation}
where the second isomorphism comes from Lemma \ref{def-obs on 3-fold}. We have obtained a canonical inclusion of a half-dimensional subspace (by \eqref{dimensions}).

Next, we check $\Ext^1_D(I_{Z,D},\mathcal{O}_Z)$ is an isotropic subspace of $\big(\Ext^2_X(I_{Z,X}, I_{Z,X})_0,Q\big)$ under this inclusion. 
Given $u \in \Ext^1_D(I_{Z,D}, \oO_Z)$, the corresponding
element in $\Ext^2_X(I_{Z,X}, I_{Z,X})_0$ is given by the composition
\begin{align*}
I_{Z,X} \stackrel{\alpha}{\to} i_{\ast}I_{Z,D} \stackrel{i_{\ast}u}{\to}
i_{\ast}\mathcal{O}_Z[1] \stackrel{\beta}{\to} I_{Z,X}[2],
\end{align*}
where $\alpha$ is the morphism constructed in (\ref{SES of ideals})
and $\beta$ is the obvious morphism.
Given another $u' \in \Ext^1_D(I_{Z,D}, \mathcal{O}_Z)$,
it is enough to show the vanishing of the composition
\begin{equation*}
I_{Z,X} \stackrel{\alpha}{\to} i_{\ast}I_{Z,D} \stackrel{i_{\ast}u}{\to}
i_{\ast}\mathcal{O}_Z[1] \stackrel{\beta}{\to} I_{Z,X}[2]
\stackrel{\alpha[2]}{\to} i_{\ast}I_{Z,D}[2] \stackrel{i_{\ast}u'[2]}{\to}
i_{\ast}\mathcal{O}_Z[3] \stackrel{\beta[2]}{\to} I_{Z,X}[4].
\end{equation*}
We claim
\begin{equation} \label{isopush}
\Ext^1_X(i_*\mathcal{O}_Z,i_*I_{Z,D})\cong\Ext^1_D(\mathcal{O}_Z,I_{Z,D}). 
\end{equation}
This implies that the composition
$i_{\ast}\mathcal{O}_Z[1] \stackrel{\beta}{\to} I_{Z,X}[2]
\stackrel{\alpha[2]}{\to} i_{\ast}I_{Z,D}[2]$ can be written as $i_{\ast}\gamma$,
for some $\gamma : \oO_Z \to I_{Z,D}[1]$. Therefore the composition
\begin{align*}
i_{\ast}
I_{Z,D} \stackrel{i_{\ast}u}{\to}
i_{\ast}\mathcal{O}_Z[1] \stackrel{\beta}{\to} I_{Z,X}[2]
\stackrel{\alpha[2]}{\to} i_{\ast}I_{Z,D}[2] \stackrel{i_{\ast}u'[2]}{\to}
i_{\ast}\mathcal{O}_Z[3]
\end{align*}
comes from $\Ext^3_D(I_{Z,D}, \mathcal{O}_Z)$ which is zero by (\ref{iso1 on D}).

We are left to show \eqref{isopush}. This follows at once from the spectral sequence
\begin{equation}E^{p,q}_2=\Ext^p_D(\mathcal{O}_Z, I_{Z,D} \otimes\wedge^{q}K_D)\Rightarrow \Ext^{p+q}_X(i_*\mathcal{O}_Z,i_*I_{Z,D}),
\nonumber \end{equation}
and 
\begin{equation*}
\Ext^0_D(\mathcal{O}_Z,I_{Z,D}\otimes \,K_D)\cong \Ext^3_D(I_{Z,D},\mathcal{O}_Z)^{*}=0, 
\end{equation*}
where the vanishing is by \eqref{iso1 on D}.
\end{proof}
Combining Lemma \ref{def-obs on 3-fold} and \ref{def-obs on 4-fold}, we deduce the following: 
\begin{prop}\label{compare def-obs}
Let $X$ be a smooth projective Calabi-Yau 4-fold and let $D\subseteq X$ be a smooth divisor. 
For any zero-dimensional subscheme $Z\subseteq D$ of length $\leqslant3$, 
we have short sequences
\begin{equation}0\to\Ext^{1}_D(I_{Z,D},I_{Z,D})_0\to \Ext^{1}_X(I_{Z,X},I_{Z,X})_0 \to H^{0}(\mathcal{O}_{Z}(D))\to 
0,  \nonumber \end{equation}
\begin{equation}0\to\Ext^{2}_D(I_{Z,D},I_{Z,D})_0\to \Ext^{2}_X(I_{Z,X},I_{Z,X})_0\to \Ext^{2}_D(I_{Z,D},I_{Z,D})_0^{*}\to 0, \nonumber \end{equation}
under which $\Ext^{2}_D(I_{Z,D},I_{Z,D})_0$ is a maximal isotropic subspace of $\Ext^{2}_X(I_{Z,X},I_{Z,X})_0$ with respect to the non-degenerate quadratic form $Q$ defined by Serre duality.
\end{prop}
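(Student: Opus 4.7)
The plan is to derive both short exact sequences by translating the Ext groups appearing in Lemma \ref{def-obs on 4-fold} into the trace-free deformation/obstruction spaces of the Hilbert schemes. The dictionary is supplied by Lemma \ref{def-obs on 3-fold} applied once to the Calabi-Yau 4-fold $X$ with $L=\oO_X$ and once to the smooth projective 3-fold $D$ with $L=\oO_D$, producing canonical isomorphisms
$$
\Ext^{1}_X(I_{Z,X},I_{Z,X})_0 \cong \Ext^{0}_X(I_{Z,X},\oO_Z), \quad \Ext^{2}_X(I_{Z,X},I_{Z,X})_0 \cong \Ext^{1}_X(I_{Z,X},\oO_Z),
$$
$$
\Ext^{1}_D(I_{Z,D},I_{Z,D})_0 \cong \Ext^{0}_D(I_{Z,D},\oO_Z), \quad \Ext^{2}_D(I_{Z,D},I_{Z,D})_0 \cong \Ext^{1}_D(I_{Z,D},\oO_Z).
$$
Combined with the pushforward identification $\Ext^0_X(i_*I_{Z,D}, i_*\oO_Z)\cong \Ext^0_D(I_{Z,D},\oO_Z)$ (and the analogous degree-one identification that already appeared as \eqref{iso1 on X}--\eqref{iso2 on X} in the proof of Lemma \ref{def-obs on 4-fold}), this converts every Ext group in Lemma \ref{def-obs on 4-fold} into one of the shape needed for the proposition.

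For the first sequence I would simply substitute these isomorphisms into the short exact sequence
$$
0\to \Ext^{0}_X(i_*I_{Z,D},i_*\oO_Z)\to \Ext^{0}_X(I_{Z,X},i_*\oO_Z)\to H^{0}(\oO_Z(D))\to 0
$$
given by Lemma \ref{def-obs on 4-fold}, which immediately yields the desired
$$
0\to \Ext^{1}_D(I_{Z,D},I_{Z,D})_0\to \Ext^{1}_X(I_{Z,X},I_{Z,X})_0\to H^{0}(\oO_Z(D))\to 0.
$$

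For the second sequence, Lemma \ref{def-obs on 4-fold} already provides a canonical inclusion $\Ext^{1}_D(I_{Z,D},\oO_Z)\hookrightarrow \Ext^{2}_X(I_{Z,X},I_{Z,X})_0$, which under the above dictionary becomes an inclusion $\Ext^{2}_D(I_{Z,D},I_{Z,D})_0\hookrightarrow \Ext^{2}_X(I_{Z,X},I_{Z,X})_0$. This inclusion is half-dimensional (the count $3n$ versus $6n$ is exactly \eqref{dimensions}) and is isotropic for the non-degenerate Serre duality form $Q$; hence it is maximal isotropic. Because $Q$ is non-degenerate, contracting with $Q$ identifies the cokernel canonically with the dual of the subspace, yielding
$$
0\to \Ext^{2}_D(I_{Z,D},I_{Z,D})_0\to \Ext^{2}_X(I_{Z,X},I_{Z,X})_0\to \Ext^{2}_D(I_{Z,D},I_{Z,D})_0^{*}\to 0.
$$

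All the substantive work — in particular the construction of the inclusion and the verification of isotropy via the vanishing of the sixfold composition in the proof of Lemma \ref{def-obs on 4-fold} — has already been absorbed into the preceding lemmas, so the proposition is essentially a packaging step. The only point requiring genuine attention is the compatibility of the various canonical identifications: one should check that the inclusion in Lemma \ref{def-obs on 4-fold} matches, under the Lemma \ref{def-obs on 3-fold} dictionary, the map induced on second trace-free Ext's by the morphism $\alpha\colon I_{Z,X}\to i_*I_{Z,D}$ of \eqref{SES of ideals}; this is forced by functoriality since every isomorphism in sight is canonical, but is the step I would spell out most carefully.
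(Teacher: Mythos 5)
Your proposal is correct and follows essentially the same route as the paper: both translate the Ext groups of Lemma \ref{def-obs on 4-fold} into trace-free Ext's via Lemma \ref{def-obs on 3-fold} (applied to $Y=X$ and $Y=D$), read off the first sequence and the maximal isotropic inclusion, and then use non-degeneracy of $Q$ plus isotropy to identify the cokernel with $\Ext^{2}_D(I_{Z,D},I_{Z,D})_0^{*}$ (the paper makes this last step explicit with a commutative diagram and a dimension count showing the induced map $t$ into the annihilator is an isomorphism). The compatibility-of-identifications point you flag at the end is indeed the only place where care is needed, and the paper treats it exactly as you suggest, by taking all the isomorphisms to be the canonical ones from the preceding lemmas.
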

\begin{proof}
By Lemma \ref{def-obs on 3-fold}, we have isomorphisms
\begin{equation}\Ext^{i+1}_Y(I_{Z,Y},I_{Z,Y}\otimes L)_0\cong \Ext^i_Y(I_{Z,Y},\mathcal{O}_Z\otimes L),\textrm{ }\textrm{for}\textrm{ }i=0,1 \textrm{ }\textrm{and}\textrm{ } Y=X, D.  \nonumber \end{equation}
Combining with Lemma \ref{def-obs on 4-fold}, we obtain the desired short exact sequences and an inclusion 
\begin{equation}\Ext^{2}_D(I_{Z,D},I_{Z,D})_0\hookrightarrow \Ext^{2}_X(I_{Z,X},I_{Z,X})_0 \nonumber \end{equation}
of a maximal isotropic subspace. 

This leads to the following commutative diagram
\begin{align*}\xymatrix{
0 \ar[r] & \Ext^{2}_D(I_{Z,D},I_{Z,D})_0 \ar[r] \ar[d]^{\exists\, t}& \Ext^{2}_X(I_{Z,X},I_{Z,X})_0 \ar[d]_{Q}^{\cong} \ar[r] & W \ar[r] & 0 \\
0 \ar[r] & W^* \ar[r] & \Ext^{2}_X(I_{Z,X},I_{Z,X})_0^* \ar[r] &
\Ext^{2}_D(I_{Z,D},I_{Z,D})_0^* \ar[r] & 0. }
\end{align*}
Note that the restriction $t$ of $Q$ is injective, hence also an isomorphism by dimension counting. 
Thus the quadratic form $Q$ gives an identification $W\cong \Ext^{2}_D(I_{Z,D},I_{Z,D})_0^{*}$.
\end{proof}
A positive real form $V_+$ on a complex even dimensional vector space $V$ with non-degenerate quadratic form $Q$ is a half-dimensional real subspace on which $Q$ is real and positive definite. When the obstruction space $\Ext^{2}_X(E,E)_0$ has a maximal isotropic subspace as in Proposition \ref{compare def-obs}, we can apply the following useful fact:
\begin{prop}\label{isotropic iso}
Let $V$ be an even dimensional complex vector space with a non-degenerate quadratic form $Q$ and let $V_{\mathrm{iso}}$ be a maximal isotropic subspace of $(V,Q)$. Then for any positive real form $V_+$ of $(V,Q)$, the composition 
\begin{equation}c:V_{\mathrm{iso}}\hookrightarrow V\to V_+ \nonumber \end{equation}
of the inclusion and projection is an isomorphism of the underlying real vector spaces.
\end{prop}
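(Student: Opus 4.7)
The plan is to establish the isomorphism by a pure dimension count combined with an injectivity check, using the interaction of the quadratic form $Q$ with the positive real form and the isotropic subspace.

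First, I would record the relevant real dimensions. Since $V_+$ is a positive real form, we have the real direct sum decomposition
\begin{equation*}
V = V_+ \oplus \sqrt{-1}\,V_+,
\end{equation*}
so $\dim_{\mathbb{R}} V_+ = \dim_{\mathbb{C}} V$, and the ``projection'' $V \to V_+$ in the statement is precisely the $\mathbb{R}$-linear projection along $\sqrt{-1}\,V_+$. On the other hand, $V_{\mathrm{iso}}$ is a complex subspace with $\dim_{\mathbb{C}} V_{\mathrm{iso}} = \tfrac{1}{2}\dim_{\mathbb{C}} V$, so $\dim_{\mathbb{R}} V_{\mathrm{iso}} = \dim_{\mathbb{C}} V$ as well. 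Thus $c$ is an $\mathbb{R}$-linear map between real vector spaces of the same dimension, and it suffices to prove injectivity.

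Next, I would take $v \in \ker c \subseteq V_{\mathrm{iso}}$. By definition of the projection this means $v \in \sqrt{-1}\,V_+$, so I can write $v = \sqrt{-1}\,w$ with $w \in V_+$. Because $V_{\mathrm{iso}}$ is isotropic we have $Q(v,v) = 0$, and by $\mathbb{C}$-bilinearity
\begin{equation*}
0 = Q(v,v) = Q(\sqrt{-1}\,w, \sqrt{-1}\,w) = -Q(w,w).
\end{equation*}
Since $Q$ restricted to $V_+$ is real and positive definite, $Q(w,w) = 0$ forces $w = 0$, hence $v = 0$. This shows $\ker c = 0$, and combined with the dimension count gives that $c$ is an $\mathbb{R}$-linear isomorphism.

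There is no real obstacle here; the only subtlety is to keep straight that $V_{\mathrm{iso}}$ is a complex subspace while $V_+$ is only a real subspace, so the comparison of dimensions is made over $\mathbb{R}$, and that the sign $-Q(w,w)$ emerging from $\mathbb{C}$-bilinearity is exactly what allows positive definiteness of $Q|_{V_+}$ to kill $w$.
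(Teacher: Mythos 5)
Your proof is correct and follows essentially the same route as the paper: a real-dimension count reduces the claim to injectivity, and a kernel element $v\in V_{\mathrm{iso}}\cap\sqrt{-1}\,V_+$ is killed because $Q(v,v)=0$ by isotropy while $Q$ is negative definite on $\sqrt{-1}\,V_+$ (which is exactly your computation $Q(\sqrt{-1}\,w,\sqrt{-1}\,w)=-Q(w,w)$). Your write-up is if anything slightly more explicit about the $\mathbb{R}$-linear versus $\mathbb{C}$-linear bookkeeping, but there is no substantive difference.
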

\begin{proof}
Since dimensions of $V_{\mathrm{iso}}$ and $V_+$ are the same, we only need to check that the map $c$ is injective. 
Take $v\in V_{\mathrm{iso}}$ which projects to zero in $V_+$. By 
\begin{equation}V=V_+\oplus \sqrt{-1}\cdot V_+, \nonumber \end{equation}
we know $v\in \sqrt{-1}\cdot V_+$. Then $Q(v,v)=0$, by the isotropic property, which implies that $v=0$ since $Q$ is negative definite on the 
subspace $\sqrt{-1}\cdot V_+$.
\end{proof}

\subsection{Verification in simple cases: $n\leqslant3$}

When the number $n$ of points satisfies $n\leqslant3$, the Hilbert schemes $\Hilb^{n}(X)$ and $\Hilb^{n}(D)$ are smooth of dimensions $4n$ and $3n$ respectively.  Our conjecture can then be verified by direct calculation.
 \begin{thm}\label{n less 4}
Let $X$ be a smooth projective Calabi-Yau 4-fold. Let $D$ be a smooth divisor on $X$ and set $L:=\mathcal{O}_X(D)$. 
For each $n\leqslant3$, there exists a choice of orientation $o(\mathcal{L})$ such that
\begin{equation}
\int_{[\Hilb^{n}(X)]^{\mathrm{vir}}_{o(\mathcal{L})}}e(L^{[n]})=\int_{[\Hilb^{n}(D)]^{\mathrm{vir}}}1. \nonumber \end{equation}
In particular, Conjecture \ref{conj on zero dim dt4} is true modulo $q^4$ for $L = \oO_X(D)$ and $D \subseteq X$ a smooth divisor.
\end{thm}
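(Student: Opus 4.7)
The plan is to reduce the $\DT_4$ integral on $\Hilb^n(X)$ to a classical integral over $\Hilb^n(D)$ by cutting out $\Hilb^n(D)$ with the tautological section of $L^{[n]}$, replacing the virtual class by the explicit half-Euler class formula \eqref{virtforsmooth} available when $\Hilb^n(X)$ is smooth, and then identifying the restricted obstruction bundle with the $\DT_3$ obstruction bundle on $\Hilb^n(D)$ via the maximal isotropic embedding of Proposition \ref{compare def-obs}.

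Concretely, for $n\leqslant 3$ both $\Hilb^n(X)$ and $\Hilb^n(D)$ are smooth of dimensions $4n$ and $3n$ respectively. Since $L^{[n]}$ has rank $n$ and the tautological section $\sigma$ of Proposition \ref{section s intro} has zero locus $\iota\colon\Hilb^n(D)\hookrightarrow\Hilb^n(X)$ of the expected codimension $n$, the section $\sigma$ is a regular section and
$$e(L^{[n]})\cap[\Hilb^n(X)]=\iota_{*}[\Hilb^n(D)]\in H_{6n}(\Hilb^n(X),\mathbb{Z}).$$
On the other hand, smoothness of $\Hilb^n(X)$ and formula \eqref{virtforsmooth} identify
$$[\Hilb^n(X)]^{\mathrm{vir}}_{o(\lL)}=\mathrm{PD}\bigl(e(\mathrm{Ob}_X,Q)\bigr),$$
where $\mathrm{Ob}_X$ is the rank-$6n$ obstruction bundle with fibre $\Ext^{2}_X(I_{Z,X},I_{Z,X})_0$ and $e(\mathrm{Ob}_X,Q)=e(\mathrm{Ob}_X^{+})$ is the Euler class of a positive real form determined by the chosen orientation. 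Combining these gives
$$\int_{[\Hilb^n(X)]^{\mathrm{vir}}_{o(\lL)}}e(L^{[n]})=\int_{\Hilb^n(D)}e(\mathrm{Ob}_X,Q)\big|_{\Hilb^n(D)}.$$

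The crucial remaining step is to identify this restriction, up to sign, with the Euler class of the $\DT_3$ obstruction bundle $\mathrm{Ob}_D$ on $\Hilb^n(D)$, whose fibre at $Z$ is $\Ext^{2}_D(I_{Z,D},I_{Z,D})_0$. Proposition \ref{compare def-obs} exhibits $\mathrm{Ob}_D$ as a maximal isotropic complex subbundle of $\mathrm{Ob}_X|_{\Hilb^n(D)}$ with respect to the Serre duality form $Q$, and applying Proposition \ref{isotropic iso} fibrewise produces a real vector bundle isomorphism
$$\mathrm{Ob}_D\;\stackrel{\sim}{\longrightarrow}\;\mathrm{Ob}_X^{+}\big|_{\Hilb^n(D)}.$$
Comparing the canonical complex orientation on $\mathrm{Ob}_D$ with the orientation induced by $o(\lL)$ on $\mathrm{Ob}_X^{+}$ yields $e(\mathrm{Ob}_X,Q)|_{\Hilb^n(D)}=\pm\,e(\mathrm{Ob}_D)$, with the sign locally constant on $\Hilb^n(D)$. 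Finally, since $\Hilb^n(D)$ is smooth of dimension $3n$ with the $\DT_3$ obstruction bundle $\mathrm{Ob}_D$ of rank $3n$, the MNOP perfect obstruction theory gives $[\Hilb^n(D)]^{\mathrm{vir}}=e(\mathrm{Ob}_D)\cap[\Hilb^n(D)]$, which closes the chain of equalities.

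The step I expect to require the most care is the sign in the penultimate equality: it must be promoted from a pointwise comparison of orientations (via Proposition \ref{isotropic iso}) to a globally coherent choice of $o(\lL)$. This is where the ``there exists'' in the statement is used: we are free to flip the orientation $o(\lL)$ on each connected component of $\Hilb^n(X)$ meeting $\Hilb^n(D)$, and the orientations form a torsor under $H^{0}(\Hilb^n(X),\mathbb{Z}_2)$, so we simply choose the sign on each such component so that the real bundle isomorphism above is orientation preserving. The ``In particular'' clause then follows by combining with the Levine--Pandharipande/J.~Li evaluation of the zero-dimensional $\DT_3$ series of $D$ together with the identity \eqref{Chernmanip}.
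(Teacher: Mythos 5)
Your proposal is correct and follows essentially the same route as the paper: smoothness for $n\leqslant 3$, the half-Euler class formula \eqref{virtforsmooth}, the tautological section of Proposition \ref{section s} cutting out $\Hilb^n(D)$ in the expected codimension, the maximal isotropic subbundle of Proposition \ref{compare def-obs} combined with Proposition \ref{isotropic iso} to identify $e(\mathrm{Ob}^+)|_{\Hilb^n(D)}$ with $e(\mathrm{Ob}_{\Hilb^n(D)})$, and finally the Levine--Pandharipande/Li evaluation together with \eqref{Chernmanip}. Your explicit discussion of promoting the pointwise sign to a global choice of $o(\mathcal{L})$ via the $H^0(\Hilb^n(X),\mathbb{Z}_2)$-torsor is slightly more detailed than the paper's ``for an appropriate choice of orientation,'' but it is the same argument.
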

\begin{proof}
When $n\leqslant3$, the Hilbert schemes $\Hilb^{n}(X)$, $\Hilb^{n}(D)$ are smooth of dimensions $4n$ and $3n$ respectively.We have also seen that the obstruction sheaf $\mathrm{Ob}$ on $\Hilb^{n}(X)$ is locally free of rank $6n$ (\eqref{dimensions} and Lemma \ref{def-obs on 3-fold}).

Consider the quadric bundle $(\mathrm{Ob},Q)$, where $Q$ is the non-degenerate quadratic form defined by Serre duality. By \cite[Lem.~5]{EG}, we can choose a positive real form $\mathrm{Ob}^+$ 
of the quadric bundle $(\mathrm{Ob},Q)$, such that $\mathrm{Ob} \cong \mathrm{Ob}^+ \otimes_{\mathbb{R}} \mathbb{C}$ as quadric bundles. 
Then
\begin{equation}[\Hilb^{n}(X)]^{\mathrm{vir}}_{o(\mathcal{L})}=\mathrm{PD}\big(e(\mathrm{Ob}^+)\big)\in H_{2n}(\Hilb^{n}(X)) \nonumber \end{equation}
for an appropriate choice of orientation $o(\mathcal{L})$ in the definition of both sides. 
Therefore
\begin{eqnarray*}
\int_{[\Hilb^{n}(X)]^{\mathrm{vir}}_{o(\mathcal{L})}}e(L^{[n]})&=& \int_{[\Hilb^{n}(X)]}e(L^{[n]})\cdot e(\mathrm{Ob}^+) \\
&=& \int_{[\Hilb^{n}(D)]}e(\mathrm{Ob}^+)|_{\Hilb^{n}(D)},
\end{eqnarray*}
where the second equality follows from the fact that $\Hilb^{n}(D)\subseteq\Hilb^{n}(X)$ represents the Poincar\'e dual of the Euler class $e(L^{[n]})$ by Proposition \ref{section s}.

Next, we use the fact that the subspaces 
$$
\Ext^2_D(I_{Z,D},I_{Z,D})_0 \hookrightarrow \Ext^2_X(I_{Z,X},I_{Z,X})_0
$$
determine a maximal isotropic subbundle $V_{\mathrm{iso}} \subseteq \mathrm{Ob}|_{\Hilb^n(D)}$. Note that
$$
V_{\mathrm{iso}} \cong \mathrm{Ob}_{\Hilb^n(D)}
$$
is precisely the obstruction bundle of the perfect obstruction theory on $\Hilb^n(D)$ studied in \cite{MNOP}, whose fiber over $Z\in\Hilb^{n}(D)$ is $\Ext^2_D(I_{Z,D},I_{Z,D})_0$. By (a family version of) Proposition \ref{isotropic iso}, we have
$$
e(\mathrm{Ob}^+)|_{\Hilb^n(D)} = e(V_{\mathrm{iso}}) = e(\mathrm{Ob}_{\Hilb^n(D)}).
$$
Since $\Hilb^n(D)$ is smooth, we also have 
$$
[\Hilb^n(D)]^{\mathrm{vir}} = e(\mathrm{Ob}_{\Hilb^n(D)}) \cap [\Hilb^n(D)].
$$
Putting everything together, we deduce
\begin{align*}
\int_{[\Hilb^{n}(D)]}e(\mathrm{Ob}^+)|_{\Hilb^{n}(D)}=&\int_{[\Hilb^{n}(D)]}e(\mathrm{Ob}_{\Hilb^{n}(D)}) \\
=& \int_{[\Hilb^{n}(D)]^{\mathrm{vir}}}1. 
\end{align*}
The final statement of the proposition follows from \cite{LP, Li} and \eqref{Chernmanip}.
%For family version: use Banica, Putinar, Schumacher, Satz 1(ii) and 3(ii).
\end{proof}
For general $\Hilb^{n}(X)$, we need Joyce's theory of D-manifolds or Kuranishi atlases to prove a similar statement.
We hope to return to this in a future work.

\section{The toric case}

\subsection{Definition and conjecture}

Following \cite[Sect.~8]{CL}, we can similarly study zero-dimensional $\DT_4$ invariants of toric Calabi-Yau 4-folds (which are never compact). 

Let $X$ be a smooth quasi-projective toric Calabi-Yau 4-fold. By this we mean a smooth quasi-projective toric 4-fold $X$ satisfying $K_X \cong \oO_X$ and  $H^{>0}(\oO_X) = 0$. We also assume the fan contains cones of dimension 4. Such cones correspond to $(\mathbb{C}^*)^4$-invariant affine open subsets (equivariantly) isomorphic to $\mathbb{C}^4$.
Fix a Calabi-Yau volume form $\Omega$ on $X$ and denote by $T\subseteq (\mathbb{C}^*)^4$ the 3-dimensional subtorus which preserves $\Omega$.
Let $\bullet$ be $\Spec \mathbb{C}$ with trivial $(\mathbb{C}^*)^4$-action. We denote by $\mathbb{C} \otimes t_i$ the 1-dimensional $(\mathbb{C}^*)^4$-representation with weight $t_i$ and we write $\lambda_i \in H_{(\mathbb{C}^*)^4}^{\ast}(\bullet)$ for its $(\mathbb{C}^*)^4$-equivariant first Chern class. Then
\begin{align*}
H^*_{(\mathbb{C}^*)^4}(\bullet)=\mathbb{C}[\lambda_1, \lambda_2, \lambda_3,\lambda_4],  \end{align*} 
\begin{align*}
H^*_{T}(\bullet)=\mathbb{C}[\lambda_1, \lambda_2, \lambda_3,\lambda_4]/(\lambda_1+\lambda_2+\lambda_3+\lambda_4)
\cong \mathbb{C}[\lambda_1, \lambda_2,\lambda_3]. 
\end{align*} 
The $(\mathbb{C}^*)^4$-action and $T$-action both canonically lift to the Hilbert scheme $\Hilb^n(X)$ of $n$ points on $X$,
where $T$ preserves the Serre duality pairing (for compactly supported sheaves).

Let $L$ be a $T$-equivariant line bundle on $X$ and let $L^{[n]}$ be its tautological bundle with induced $T$-equivariant structure.
As in Definition \ref{def of zero dim inv}, we would like to evaluate the integral
\begin{equation}\int_{[\Hilb^n(X)]^{\mathrm{vir}}}e(L^{[n]}),\quad \textrm{for}\textrm{ } n\geqslant1. \nonumber \end{equation}
However, $\Hilb^n(X)$ is non-compact, so the usual virtual class is not well-defined. Nevertheless,
$\Hilb^n(X)$ is ``equivariantly compact'', i.e.~the $T$-fixed locus $\Hilb^n(X)^T$ is compact. In fact, it consists of finitely many points.
\begin{lem} \label{Tfixlocus}
At the level of closed points, we have
\begin{equation}\Hilb^n(X)^T=\Hilb^n(X)^{(\mathbb{C}^*)^4}, \nonumber \end{equation}
which consists of finitely many points.
\end{lem}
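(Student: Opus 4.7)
The inclusion $\Hilb^n(X)^{(\mathbb{C}^*)^4} \subseteq \Hilb^n(X)^T$ is immediate from $T \subseteq (\mathbb{C}^*)^4$, so the real content lies in the reverse inclusion together with finiteness. My plan is to argue in two stages: first, establish the analogous equality $X^T = X^{(\mathbb{C}^*)^4}$ on $X$ itself, which forces the support of any $T$-fixed $Z$ to lie at $(\mathbb{C}^*)^4$-fixed points; second, show that on any $(\mathbb{C}^*)^4$-equivariant affine chart $\cong \mathbb{C}^4$, a $T$-invariant $\mathfrak{m}$-primary ideal is automatically monomial. Finiteness will then follow from the standard solid-partition classification of monomial ideals.

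For the global step $X^T = X^{(\mathbb{C}^*)^4}$, I would use the Calabi-Yau condition on the fan $\Sigma$ of $X$. In the cocharacter lattice $N \cong \mathbb{Z}^4$, the subgroup $T$ corresponds to the hyperplane $N_T = \{n : n_1+n_2+n_3+n_4=0\}$, dual to the character $\chi_0 = \lambda_1+\lambda_2+\lambda_3+\lambda_4$ of the Calabi-Yau volume form. The condition $K_X \cong \oO_X$ forces every primitive ray generator $v_\rho \in \Sigma(1)$ to satisfy $\langle \chi_0, v_\rho\rangle = 1$, so for any cone $\sigma$ of positive dimension the real span of $\sigma$ contains a vector with $\sum n_i = 1$ and therefore cannot coincide with $N_T \otimes_\mathbb{Z} \mathbb{R}$. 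Consequently the stabilizer subtorus of the corresponding $(\mathbb{C}^*)^4$-orbit $O_\sigma$ does not contain $T$, so no positive-dimensional orbit has $T$-fixed points, yielding $X^T = X^{(\mathbb{C}^*)^4}$.

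For the local step, let $Z \in \Hilb^n(X)^T$; its reduced support is a $T$-invariant finite set, hence contained in $X^{(\mathbb{C}^*)^4}$ by the above. In a $(\mathbb{C}^*)^4$-equivariant chart $\mathbb{C}^4$ around a support point, $Z$ is cut out by an $\mathfrak{m}$-primary $T$-invariant ideal $I \subseteq R := \mathbb{C}[x_1,x_2,x_3,x_4]$. The key observation is that two monomials $x^a,x^b$ share a $T$-weight iff $a-b \in \mathbb{Z}\cdot(1,1,1,1)$, so each $T$-weight space of $R$ has basis $\{m_0 p^k\}_{k\geq 0}$, where $p := x_1 x_2 x_3 x_4$ and $m_0$ is the unique monomial of minimal total degree in that weight. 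Any $T$-semi-invariant element is therefore of the form $m_0 f(p)$ for a polynomial $f$, and after localizing at $\mathfrak{m}$ one writes $f(p) = p^r u$ with $u \in R_\mathfrak{m}^\times$, forcing $m_0 p^r \in I R_\mathfrak{m}$. Hence $I R_\mathfrak{m}$ is generated by monomials, and contracting back (using $I = I R_\mathfrak{m} \cap R$, valid for $\mathfrak{m}$-primary $I$) shows $I$ itself is monomial; in particular $Z$ is $(\mathbb{C}^*)^4$-fixed. Finiteness is then standard: the fan has finitely many maximal cones, and at each $(\mathbb{C}^*)^4$-fixed point the monomial ideals of colength at most $n$ correspond to solid partitions of bounded size, of which there are finitely many.

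The main obstacle is the local step: a naive hope that $T$-semi-invariant polynomials are monomials fails, since $T$-weights are \emph{not} multiplicity-free on $R$ (for instance $1$ and $p$ share the same $T$-weight). The trick is that after localizing at $\mathfrak{m}$, any polynomial in $p$ with nonzero constant term becomes a unit, which absorbs the ambiguity and lets one extract the underlying monomial generator.
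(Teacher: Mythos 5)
Your proof is correct and follows essentially the same route as the paper's: both reduce to a $(\mathbb{C}^*)^4$-invariant chart $\mathbb{C}^4$, observe that each $T$-weight space of $\mathbb{C}[x_1,x_2,x_3,x_4]$ is spanned by $m_0\,(x_1x_2x_3x_4)^k$ so that $T$-invariant ideals are generated by elements $m_0 f(x_1x_2x_3x_4)$, and then absorb $f$ into a unit near the origin to conclude the ideal is monomial. The only cosmetic differences are that you obtain the support statement from the fan (via $\langle \chi_0, v_\rho\rangle = 1$) rather than from the observation that every nonzero point of $\mathbb{C}^4$ has positive-dimensional $T$-orbit, and that you localize at $\mathfrak{m}$ where the paper restricts to the distinguished open set on which the $f_i$ are invertible.
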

\begin{proof}
We cover $X$ by maximal $(\mathbb{C}^*)^4$-invariant open affine subsets $\{U_\alpha\}$ with centres at $(\mathbb{C}^*)^4$-fixed points. There exist coordinates $x_1,x_2,x_3,x_4$ on $U_\alpha\cong\mathbb{C}^4$, such that the action of $t\in(\mathbb{C}^*)^4$ on $U_\alpha$ is given by
\begin{equation}t \cdot x_i = t_i x_i, \quad \textrm{for all } i=1,2,3,4. \nonumber \end{equation}
Then the Calabi-Yau torus is given by
\begin{equation}T=\{t\in(\mathbb{C}^*)^4\textrm{ }|\textrm{ }t_1t_2t_3t_4=1\}  \nonumber \end{equation}
and we see that $U_\alpha$ is also $T$-invariant. Therefore it suffices to prove the lemma for $X = U_\alpha = \mathbb{C}^4$ with the standard torus action. 

The $(\mathbb{C}^*)^4$-invariant ideals in $\mathbb{C}[x_1,x_2,x_3,x_4]$ are precisely the monomial ideals. Clearly 
\begin{equation}\Hilb^n(X)^T \supseteq \Hilb^n(X)^{(\mathbb{C}^*)^4}. \nonumber \end{equation}
By considering the weight of $x^{n_1}_1 x^{n_2}_2 x^{n_3}_3 x^{n_4}_4$ under the action of $t\in\mathbb{C}^4$, it is easy to see that
any $T$-invariant ideal $I\subseteq \mathbb{C}[x_1,x_2,x_3,x_4]$ is of form
\begin{equation}I=\langle x^{n_{11}}_1 x^{n_{12}}_2 x^{n_{13}}_3 x^{n_{14}}_4 f_1(x_1x_2x_3x_4),\cdots, x^{n_{l1}}_1 x^{n_{l2}}_2 x^{n_{l3}}_3 x^{n_{l4}}_4 f_l(x_1x_2x_3x_4) \rangle, \nonumber \end{equation}
where $\{f_i(y)\}$ are polynomials of one variable with constant coefficient 1 and $n_{ij} \in \mathbb{Z}_{\geqslant 0}$. Suppose $I$ is $T$-invariant and corresponds to a zero-dimensional subscheme $Z$. Then the underlying reduced subscheme $Z_{\mathrm{red}}$ is a zero-dimensional $T$-invariant subset of $\mathbb{C}^4$, i.e.~$Z_{\mathrm{red}} = \{(0,0,0,0)\}$. Therefore $I$ is determined by its restriction to any Zariski open neighbourhood $U$ of $(0,0,0,0)$.  Take
$$
(0,0,0,0) \in U =  \{f_1(x_1x_2x_3x_4) \neq 0\} \cap \cdots \cap \{f_l(x_1x_2x_3x_4) \neq 0\}.
$$
The polynomials $f_i(x_1x_2x_3x_3)$ become invertible elements on $U$ and therefore
$$
I|_U = \langle x^{n_{11}}_1 x^{n_{12}}_2 x^{n_{13}}_3 x^{n_{14}}_4,\cdots, x^{n_{l1}}_1 x^{n_{l2}}_2 x^{n_{l3}}_3 x^{n_{l4}}_4 \rangle.
$$
We conclude that
$$
I = \langle x^{n_{11}}_1 x^{n_{12}}_2 x^{n_{13}}_3 x^{n_{14}}_4,\cdots, x^{n_{l1}}_1 x^{n_{l2}}_2 x^{n_{l3}}_3 x^{n_{l4}}_4 \rangle
$$
which shows $\Hilb^n(X)^T \subseteq \Hilb^n(X)^{(\mathbb{C}^*)^4}$ as sets.
\end{proof}
\begin{exam}
Consider $X = \mathbb{C}^4$ with standard torus action. Then 
$$
I = \langle x_1^3,x_2^3,x_3^3,x_4^3, x_1^2 x_2^2 x_3^2 x_4^2 + x_1x_2x_3x_4 \rangle
$$ defines a zero-dimensional $T$-invariant subscheme. According to the proof of Lemma \ref{Tfixlocus}, it is equal to  $\langle x_1^3,x_2^3,x_3^3,x_4^3, x_1 x_2 x_3 x_4 \rangle$. Indeed, we have
$$
x_1x_2x_3x_4 = [x_2^3 x_3^3 x_4^3] x_1^3 + [1-x_1x_2x_3x_4] (x_1^2 x_2^2 x_3^2 x_4^2 + x_1x_2x_3x_4).
$$ 
\end{exam}
${}$ \\

Let $U \cong \mathbb{C}^4$ be a maximal $(\mathbb{C}^*)^{4}$-invariant affine open subset of $X$. Choose coordinates $x_1, \ldots, x_4$ such that the action is given by
\begin{equation*}
t \cdot x_i = t_i x_i, \quad \textrm{for all } i =1,2,3,4.
\end{equation*}
The $T$-invariant (and therefore $(\mathbb{C}^*)^{4}$-invariant by Lemma \ref{Tfixlocus}) zero-dimensional subschemes of $U_\alpha$ can be labelled by solid partitions. 

\begin{defi}
A \textit{solid partition} $\pi = \{\pi_{ijk}\}_{i,j,k \geqslant 1}$ consists of a sequence of non-negative integers $\pi_{ijk} \in \mathbb{Z}_{\geqslant 0}$ satisfying
\begin{align*}
&\pi_{ijk} \geqslant \pi_{i+1,j,k}, \quad \pi_{ijk} \geqslant \pi_{i,j+1,k}, \quad \pi_{ijk}\geqslant \pi_{i,j,k+1} \quad \forall \textrm{ } 
i,j,k \geqslant 1, \end{align*}
such that
\begin{equation}|\pi| := \sum_{i,j,k \geqslant 1} \pi_{ijk} < \infty. \nonumber \end{equation}
Here $|\pi|$ is called the \emph{size} of $\pi$.
\end{defi}
Specifically, the zero-dimensional subscheme $Z_\pi$ corresponding to the solid partition $\pi = \{\pi_{ijk}\}_{i,j,k \geqslant 1}$ is defined by the monomial ideal
$$
I_{Z_\pi} := \langle x_1^{i-1} x_2^{j-1} x_3^{k-1} x_4^{\pi_{ijk}} \ | \ i,j,k \geqslant 1 \,\rangle
$$
and $|\pi|$ equals the length of $Z_\pi$. The $(\mathbb{C}^*)^4$-equivariant representation of $Z_{\pi}$ is given by 
\begin{equation} \label{Zpi}
Z_\pi = \sum_{i,j,k \geqslant 1} \sum_{l=1}^{\pi_{ijk}} t_1^{i-1} t_2^{j-1} t_3^{k-1} t_4^{l-1},
\end{equation}
where the sum is over all $i,j,k \geqslant1$ for which $\pi_{ijk}\geqslant 1$.  \\

In order to be able to apply Serre duality for $\Ext^*(I_Z,I_Z)$ on a non-compact toric Calabi-Yau 4-fold $X$, we will use the following lemma.
\begin{lem}\label{lem on toric identify deform-obs}
For any $Z\in \Hilb^n(X)^T$, we have isomorphisms of $T$-representations
\begin{equation}\Ext^i(I_Z,\oO_Z) \cong \Ext^{i+1}(I_Z,I_Z), \,\,\, i=0,1,2,
\nonumber \end{equation}
\begin{equation}\Ext^{i}(I_Z,I_Z)\cong \Ext^i(\oO_Z,\oO_Z), \,\,\, i=1,2,3, \quad \Ext^{4}(I_Z,I_Z)=0. \nonumber \end{equation}
\end{lem}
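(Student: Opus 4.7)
The plan is to extract everything from the short exact sequence
\[
0 \to I_Z \to \oO_X \to \oO_Z \to 0
\]
by applying $\RHom(-,\oO_X)$, $\RHom(-,\oO_Z)$, and $\RHom(I_Z,-)$, and then invoking Serre duality for the traceless part in the single case not covered by the elementary argument. The three relevant auxiliary computations are:
\begin{itemize}
\item $\Ext^i(\oO_X,\oO_X)=H^i(X,\oO_X)=0$ for $i>0$ by the standing hypothesis $H^{>0}(\oO_X)=0$;
\item $\Ext^i(\oO_X,\oO_Z)=H^i(X,\oO_Z)=0$ for $i>0$ since $Z$ is zero-dimensional;
\item $\Ext^i(\oO_Z,\oO_X)=0$ for $i\neq 4$, because $\oO_Z$ is Cohen--Macaulay of dimension $0$ in the smooth $4$-fold $X$ (locally $Z$ is cut out by a regular sequence of length $4$, giving a Koszul resolution), so $\mathcal{E}xt^q(\oO_Z,\oO_X)=0$ for $q\neq 4$ and the local-to-global spectral sequence collapses since the support of these $\mathcal{E}xt$-sheaves is finite.
\end{itemize}

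Feeding these into the long exact sequence obtained from $\RHom(-,\oO_X)$ yields $\Ext^i(I_Z,\oO_X)=0$ for $i=1,2,4$. The sequence from $\RHom(-,\oO_Z)$ then gives $\Ext^i(I_Z,\oO_Z)\cong\Ext^{i+1}(\oO_Z,\oO_Z)$ for all $i\geqslant 0$ (using the isomorphism $\Hom(\oO_Z,\oO_Z)\cong\Hom(\oO_X,\oO_Z)$ at the $i=0$ step). Plugging into the long exact sequence from $\RHom(I_Z,-)$, the vanishings $\Ext^i(I_Z,\oO_X)=0$ for $i=1,2$ together with the fact that the canonical map $\Hom(I_Z,I_Z)\cong\mathbb{C}\to\Hom(I_Z,\oO_X)\cong\mathbb{C}$ is an isomorphism directly give the isomorphisms $\Ext^{i+1}(I_Z,I_Z)\cong\Ext^i(I_Z,\oO_Z)$ for $i=0,1$. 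Combining the two chains then yields $\Ext^i(I_Z,I_Z)\cong\Ext^i(\oO_Z,\oO_Z)$ for $i=1,2$.

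The main obstacle is the case $i=3$ and the vanishing of $\Ext^4(I_Z,I_Z)$, since $\Ext^3(I_Z,\oO_X)\cong\Ext^4(\oO_Z,\oO_X)$ need not vanish. Here I would switch to Serre duality. In characteristic zero the trace map splits $\Ext^i(I_Z,I_Z)=\Ext^i(I_Z,I_Z)_0\oplus H^i(\oO_X)$, so for $i\geqslant 1$ one has $\Ext^i(I_Z,I_Z)=\Ext^i(I_Z,I_Z)_0$. The traceless parts are computed by $H^0$ of the sheaves $\mathcal{E}xt^i(I_Z,I_Z)_0$, which are supported on the compact set $Z$, so Serre duality on the smooth Calabi--Yau $X$ (with $K_X\cong\oO_X$) applies in compactly supported form and gives $\Ext^i(I_Z,I_Z)_0\cong\Ext^{4-i}(I_Z,I_Z)_0^*$. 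Taking $i=0$ yields $\Ext^4(I_Z,I_Z)=\Ext^4(I_Z,I_Z)_0=0$, while $i=1$ gives $\Ext^3(I_Z,I_Z)\cong\Ext^1(I_Z,I_Z)^*\cong\Ext^1(\oO_Z,\oO_Z)^*\cong\Ext^3(\oO_Z,\oO_Z)$, where the last isomorphism is Serre duality for the compactly supported sheaf $\oO_Z$. Finally, $\Ext^2(I_Z,\oO_Z)\cong\Ext^3(I_Z,I_Z)$ is forced by dimension count (or by reading off the connecting map in the $i=2$ piece of sequence (A) since both sides now have the same dimension and the map is injective).

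All of the constructions above are canonical and respect the torus actions: the various exact sequences and spectral sequences are $(\mathbb{C}^*)^4$-equivariant, and Serre duality is $T$-equivariant because by definition $T\subseteq (\mathbb{C}^*)^4$ is the subtorus preserving the Calabi--Yau volume form $\Omega$, hence preserves the trivialization of $K_X$ used in the duality pairing. Thus all isomorphisms lift to isomorphisms of $T$-representations, as required.
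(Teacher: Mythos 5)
Your reduction steps coincide with the paper's: the same three long exact sequences obtained from $0\to I_Z\to\oO_X\to\oO_Z\to 0$, the same vanishings of $\Ext^i(\oO_X,\oO_Z)$, $\Ext^i(\oO_Z,\oO_X)$ and $\Ext^i(I_Z,\oO_X)$, and the same conclusions $\Ext^i(I_Z,\oO_Z)\cong\Ext^{i+1}(\oO_Z,\oO_Z)$ for all $i$ and $\Ext^{i+1}(I_Z,I_Z)\cong\Ext^i(I_Z,\oO_Z)$ for $i=0,1$. The problem is that the entire weight of the lemma sits in the remaining case ($i=3$ and the vanishing of $\Ext^4(I_Z,I_Z)$), and there your argument has a gap. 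You invoke ``Serre duality in compactly supported form'' to get $\Ext^i(I_Z,I_Z)_0\cong\Ext^{4-i}(I_Z,I_Z)_0^*$, but neither argument of $\RHom(I_Z,I_Z)_0$ has proper support, so the standard non-compact Serre duality (which requires at least one of the two objects to be properly supported) does not apply. The observation that $\mathcal{E}xt^{\geqslant 1}(I_Z,I_Z)$ are skyscrapers on $Z$ only reduces you to a Grothendieck local duality statement for the perfect complex $\RHom(I_Z,I_Z)_0$, which is true but is not what you cite and needs its own proof. The paper is deliberately structured to avoid exactly this: the Remark following the lemma presents Serre duality for $\Ext^i(I_Z,I_Z)$, $i=1,2,3$, as a \emph{consequence} of the lemma (via the identification with $\Ext^i(\oO_Z,\oO_Z)$, where $\oO_Z$ is properly supported), not as an input to it. Instead, the paper finishes by showing that the connecting map $\eta\colon\Ext^3(I_Z,\oO_X)\to\Ext^3(I_Z,\oO_Z)$ is an isomorphism, by identifying it with the surjection $\phi\colon\Ext^4(\oO_Z,\oO_X)\to\Ext^4(\oO_Z,\oO_Z)$ and checking that both sides have dimension $n$ (Riemann--Roch for the source, Serre duality for the properly supported $\oO_Z$ for the target). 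Your route could be repaired by proving the local duality at the points of $Z$, or by a careful toric compactification, but as written the key step is asserted rather than proved.

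A secondary error: your claim that $\Hom(I_Z,I_Z)\cong\mathbb{C}\to\Hom(I_Z,\oO_X)\cong\mathbb{C}$ ``is an isomorphism directly'' is false as stated, since $X$ is only quasi-projective: $\Hom(I_Z,\oO_X)\cong H^0(\oO_X)$, which for $X=\mathbb{C}^4$ is the full polynomial ring. The map is nevertheless an isomorphism, because the isomorphism $H^0(\oO_X)\to\Hom(I_Z,\oO_X)$ factors through $\Hom(I_Z,I_Z)$ and the second map is injective; this is the argument the paper gives, and you need some version of it for the first isomorphism $\Ext^0(I_Z,\oO_Z)\cong\Ext^1(I_Z,I_Z)$.
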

\begin{proof}
All morphisms in this proof are $T$-equivariant. By applying $\RHom(-,\oO_Z)$ to the short exact sequence, 
\begin{equation} \label{idealseq} 0\to I_Z\to \oO_X \to \oO_Z\to 0,   \end{equation}
we obtain isomorphisms
\begin{equation}\label{map i2}\Ext^i(I_Z,\oO_Z) \cong \Ext^{i+1}(\oO_Z,\oO_Z), \quad i\geqslant0, \end{equation}
where we use $H^{i\geqslant1}(\oO_X) = 0$. By applying $\RHom(I_Z,-)$ to \eqref{idealseq} 
we obtain an exact sequence  
\begin{equation}\label{Trepsseq}\cdots \to \Ext^i(I_Z,\oO_Z)\to\Ext^{i+1}(I_Z,I_Z)\to \Ext^{i+1}(I_Z,\oO_X)\to \cdots.  \end{equation}
By applying $\RHom(-,\oO_X)$ to \eqref{idealseq}, we find
\begin{align}
\begin{split} \label{map i1}
&\Hom(I_Z,\oO_X) = \Hom(\oO_X,\oO_X), \\
&\Ext^1(I_Z,\oO_X) = \Ext^2(I_Z,\oO_X) =\Ext^4(I_Z,\oO_X) = 0, \\
&\Ext^3(I_Z,\oO_X)\cong \Ext^4(\oO_Z,\oO_X), 
\end{split}
\end{align}
where we use $H^{i\geqslant1}(\oO_X) = 0$ and $\Ext^i(\oO_Z,\oO_X) = 0$ for $i \leqslant 3$ 
(by \cite[pp.~78]{Huy}, $\mathcal{E}xt^{i\leqslant3}(\oO_Z,\oO_X)=0$, so the vanishing follows from the local-to-global spectral sequence 
$H^p(X,\mathcal{E}xt^{q}(-,-))\Rightarrow \Ext^{p+q}(-,-)$
\cite[pp.~85, (3.16)]{Huy}). Combining with (\ref{Trepsseq}), we get the following isomorphisms and exact sequence 
\begin{align} 
\begin{split} \label{twists} 
&\Ext^0(I_Z,\oO_Z) \cong \Ext^{1}(I_Z,I_Z), \quad \Ext^1(I_Z,\oO_Z) \cong \Ext^{2}(I_Z,I_Z), \\
&0 \to \Ext^2(I_Z,\oO_Z)\to\Ext^{3}(I_Z,I_Z)\to \Ext^{3}(I_Z,\oO_X)\stackrel{\eta\,}{\to} \Ext^3(I_Z,\oO_Z)\to  \\ 
&\ \, \to\Ext^{4}(I_Z,I_Z)\to\Ext^{4}(I_Z,\oO_X)=0. 
\end{split}
\end{align}
For the first isomorphism of \eqref{twists}, we used $\Hom(I_Z,I_Z) \cong \Hom(I_Z,\oO_X)$. This follows from the fact that the isomorphism $H^0(\oO_X) \rightarrow \Hom(I_Z,\oO_X)$ of \eqref{map i1} factors through $H^0(\oO_X) \rightarrow \Hom(I_Z,I_Z)$ (see diagram \eqref{tracediag}\,\footnote{\label{equivtrace}Since $X$ is smooth and quasi-projective, any $(\mathbb{C}^*)^4$-equivariant coherent sheaf on $X$ has a finite 
$(\mathbb{C}^*)^4$-equivariant locally free resolution by \cite[Prop.~5.1.28]{CG}. Therefore we have $T$-equivariant trace maps as usual.}).

We claim that the map $\eta$ is an isomorphism. In fact, we have a commutative diagram
\begin{equation}
\xymatrix{\ar @{} [dr] |{} \Hom(I_Z,\oO_X[3]) \ar[d]^{i_1} \ar[r]^{\eta} & \Hom(I_Z,\oO_Z[3]) \ar[d]^{i_2} 
&  \\  \Hom(\oO_Z[-1],\oO_X[3]) \ar[r]^{\phi}
&\Hom(\oO_Z[-1],\oO_Z[3]), }
\nonumber \end{equation}
where $i_1$, $i_2$ are isomorphisms in (\ref{map i1}), (\ref{map i2}) respectively, and $\phi$ is the map in the exact sequence 
\begin{equation}\to \Ext^4(\oO_Z,I_Z)\to \Ext^4(\oO_Z,\oO_X)\stackrel{\phi\,}{\to} \Ext^4(\oO_Z,\oO_Z)\to 0,  \nonumber \end{equation}
obtained by applying $\RHom(\oO_Z,-)$ to (\ref{idealseq}). By Riemann-Roch and Serre duality, we have\,\footnote{\label{compactification}Although $X$ is non-compact, we can pass to a ``toric compactification'' $X \subset \overline{X}$, i.e.~a smooth projective toric 4-fold containing $X$ as a $(\mathbb{C}^*)^4$-invariant open subset. Since $Z \subset X$ has proper support, we get $(\mathbb{C}^*)^4$-equivariant isomorphisms $H^0(X,\mathcal{E}{\it{xt}}^4_X(\oO_Z,\oO_X)) \cong H^0(\overline{X},\mathcal{E}{\it{xt}}^4_{\overline{X}}(\oO_Z,\oO_{\overline{X}}))$ and $\Ext^*_X(\oO_Z,\oO_Z) \cong \Ext^*_{\overline{X}}(\oO_Z,\oO_Z)$.} 
\begin{align*}
\dim_{\mathbb{C}}\Ext^4(\oO_Z,\oO_X)&= \dim_{\mathbb{C}} H^0(X,\mathcal{E}{\it{xt}}^4(\oO_Z,\oO_X)) \\
&= \chi(\oO_Z,\oO_X)=n, \nonumber  \\
\dim_{\mathbb{C}}\Ext^4(\oO_Z,\oO_Z)&=\dim_{\mathbb{C}}\Ext^0(\oO_Z,\oO_Z)=n. \nonumber 
\end{align*} 
Therefore $\phi$ is an isomorphism and
so is $\eta$. We conclude that $\Ext^2(I_Z,\oO_Z) \cong \Ext^{3}(I_Z,I_Z)$ and $\Ext^4(I_Z,I_Z)=0$ by (\ref{twists}), which finished the proof.
%Since $Z$ is compactly supported, by working with a compactification of $X$, we know $\oO_Z$ has a finite resolution by locally-free sheaves, so has $I_Z$. 
%Then we can define trace map, and similarly to (\ref{cone exact triangle}) we have a distinguished triangle 
%\begin{equation}\RHom_X(I_{Z}, \mathcal{O}_Z) \to \RHom_X(I_{Z}, I_{Z})_0[1] \to\RHom_X(\mathcal{O}_Z, \oO_X)[2],\nonumber \end{equation}
%whose cohomology gives $T$-equivariant isomorphisms
%\begin{equation}\Ext^0(I_Z,\oO_Z)\cong\Ext^{1}(I_Z,I_Z), \quad \Ext^1(I_Z,\oO_Z)\cong\Ext^{2}(I_Z,I_Z). \nonumber \end{equation}
%Then the result follows from \cite[I, proof of Lemma 7]{MNOP}.
\end{proof}
\begin{rmk}
Although a smooth quasi-projective toric Calabi-Yau 4-fold $X$ is non-compact, the sheaf $\oO_Z$ has proper support for any $Z \in \Hilb^n(X)^T$. Therefore, we can apply $T$-equivariant Serre duality to $\Ext^i(\oO_Z,\oO_Z)$\,\footnote{See footnote \ref{compactification}.}. Consequently, Lemma \ref{lem on toric identify deform-obs} allows us to apply $T$-equivariant Serre duality to $\Ext^{i}(I_Z,I_Z)$ for $i=1,2,3$. We will use this throughout the rest of this section.
\end{rmk}

Similarly to \cite[I, Lem.~6]{MNOP}, we have the following.
\begin{lem}\label{Tfixlocus scheme}
For any $Z\in \Hilb^n(X)^T$, we have an isomorphism of $T$-representations
\begin{equation}\Ext^0(I_Z,\oO_Z) \cong \Ext^{1}(I_Z,I_Z).  \nonumber \end{equation}
Moreover, $\Ext^0(I_Z,\oO_Z)^T=0$. In particular, the scheme $\Hilb^n(X)^T=\Hilb^n(X)^{(\mathbb{C}^*)^4}$ consists of finitely many \emph{reduced} points.
\end{lem}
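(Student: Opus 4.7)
The first isomorphism $\Ext^0(I_Z,\oO_Z) \cong \Ext^1(I_Z,I_Z)$ is the $i=0$ case of Lemma \ref{lem on toric identify deform-obs}. Once the vanishing $\Ext^0(I_Z,\oO_Z)^T = 0$ is established, the Zariski tangent space to $\Hilb^n(X)^T$ at any $T$-fixed point $[I_Z]$ equals $\bigl(T_{[I_Z]}\Hilb^n(X)\bigr)^T = \Ext^1(I_Z,I_Z)^T$, which is therefore zero; combined with Lemma \ref{Tfixlocus}, this identifies $\Hilb^n(X)^T = \Hilb^n(X)^{(\mathbb{C}^*)^4}$ scheme-theoretically as a finite disjoint union of reduced points.

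For the key vanishing $\Hom(I_Z,\oO_Z)^T = 0$, I first reduce to a local computation. Since $\oO_Z$ is supported on the $(\mathbb{C}^*)^4$-fixed points of $X$ and each such point sits in a $T$-invariant open affine subset equivariantly isomorphic to $\mathbb{C}^4$, decomposing $Z = \bigsqcup_p Z_p$ along these fixed points yields a $T$-equivariant decomposition
\[\Hom(I_Z,\oO_Z) = \bigoplus_p \Hom_{\oO_{\mathbb{C}^4}}(I_{Z_p},\oO_{Z_p}).\]
It therefore suffices to treat $X=\mathbb{C}^4$ with its standard $(\mathbb{C}^*)^4$-action and $Z = Z_\pi$ a single $T$-fixed length-$n$ subscheme supported at the origin, corresponding to a solid partition $\pi$.

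On $\mathbb{C}^4$ the $T$-invariants in $\Hom(I_{Z_\pi},\oO_{Z_\pi})$ decompose as a direct sum over $(\mathbb{C}^*)^4$-weights $(c,c,c,c)$ with $c \in \mathbb{Z}$, since these are precisely the characters of $(\mathbb{C}^*)^4$ trivial on $T=\ker(t_1t_2t_3t_4)$. A homogeneous morphism of such weight must send each minimal monomial generator $x^{\mathbf{a}}$ of $I_{Z_\pi}$ to a scalar multiple of $x^{\mathbf{a}+c(1,1,1,1)}$, which is nonzero in $\oO_{Z_\pi}$ only when $c<0$ and the shifted exponent is a box of $\pi$. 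Following the character-theoretic approach of \cite[I, Lem.~6]{MNOP}, and using the isomorphism $\Hom(I_{Z_\pi},\oO_{Z_\pi})\cong\Ext^1(\oO_{Z_\pi},\oO_{Z_\pi})$ from Lemma \ref{lem on toric identify deform-obs}, I would compute the full $(\mathbb{C}^*)^4$-character of $\Ext^1(\oO_{Z_\pi},\oO_{Z_\pi})$ from the $T$-equivariant Koszul resolution of the diagonal on $\mathbb{C}^4\times\mathbb{C}^4$ applied to $Q=\sum_{\mathbf{a}\in\pi}t^{\mathbf{a}}$, obtaining a Laurent polynomial in $t_1,\ldots,t_4$ whose coefficient at each weight $(c,c,c,c)$ must be shown to vanish.

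The main obstacle is that, in contrast with the 3-fold case of \cite{MNOP}, Serre duality in dimension four pairs $\Ext^1$ with $\Ext^3$ rather than $\Ext^2$, so the Koszul character calculation naturally controls the virtual sum $\sum_i(-1)^i\Ext^i(\oO_{Z_\pi},\oO_{Z_\pi})$ rather than $\Ext^1$ alone. To disentangle this, I would either track the Koszul resolution at the level of individual $\Ext$-groups (writing the character of $\Ext^1$ in closed form in $Q$ and the $t_i$), or verify the $\oO_X$-linearity of the candidate scalar $\lambda_{\mathbf{a}}$ attached to each minimal generator $x^{\mathbf{a}}$ of $I_{Z_\pi}$ along the syzygies $x_i x^{\mathbf{a}} = x_j x^{\mathbf{a}'}$, propagating the vanishing across the boundary of $\pi$. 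Making either argument uniform in all solid partitions $\pi$ is the technical heart of the proof.
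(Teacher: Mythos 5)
Your reduction steps are all correct and match the paper's: the isomorphism $\Ext^0(I_Z,\oO_Z)\cong\Ext^1(I_Z,I_Z)$ is indeed just the $i=0$ case of Lemma \ref{lem on toric identify deform-obs}; the localization of $\Hom(I_Z,\oO_Z)$ to the $(\mathbb{C}^*)^4$-fixed points reduces everything to $X=\mathbb{C}^4$ and a single monomial ideal $I_{Z_\pi}$; and the $T$-invariants are exactly the $(\mathbb{C}^*)^4$-weight spaces of weight $(c,c,c,c)$. The deduction of reducedness from the vanishing of the invariant part of the tangent space is also fine. However, the statement that actually needs proving --- that no nonzero homomorphism $I_{Z_\pi}\to\oO_{Z_\pi}$ of weight $(c,c,c,c)$ exists --- is precisely what you leave open: your observation that a generator $x^{\mathbf a}$ must map to a multiple of $x^{\mathbf a + c(1,1,1,1)}$ does not by itself rule such maps out (for $c<0$ the shifted exponent can perfectly well be a box of $\pi$), and you explicitly defer the uniform argument over all solid partitions as ``the technical heart of the proof.'' That is the entire content of the lemma, so as written the proposal has a genuine gap.

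For comparison, the paper closes this gap not by a character computation (your worry about four-dimensional Serre duality pairing $\Ext^1$ with $\Ext^3$, so that the Koszul/vertex calculus only controls the alternating sum, is a real obstruction to that route) but by invoking Haiman's explicit combinatorial basis of $\Hom(I_Z,\oO_Z)$ from Miller--Sturmfels: basis elements correspond to equivalence classes of ``Haiman arrows'' $\alpha$ in $\mathbb{Z}^4$ whose every representative has head landing in the support of $\oO_Z$, the weight of the basis element being $h(\alpha)-t(\alpha)$. The key fact, which replaces your syzygy-propagation step, is that any arrow with displacement $(n,n,n,n)$ can be translated to an equivalent arrow whose head lies outside the support of $\oO_Z$, so its class contributes no basis vector. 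If you want to complete your second proposed route (checking $\oO_X$-linearity of the scalars $\lambda_{\mathbf a}$ along syzygies directly), you would essentially be reproving this property of Haiman arrows by hand; citing the Haiman basis is the efficient way to make the argument uniform in $\pi$.
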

\begin{proof}
The isomorphism $\Ext^0(I_Z,\oO_Z) \cong \Ext^{1}(I_Z,I_Z)$ was proved in Lemma \ref{lem on toric identify deform-obs}.

Next we show $\Ext^0(I_Z,\oO_Z)^T=0$. In fact it suffices to prove this when $X = \mathbb{C}^4$.
Then there exists a convenient basis for $\Ext^0(I_Z,\oO_Z)$ of $(\mathbb{C}^*)^4$-equivariant homomorphisms. This basis is described by combinatorial objects, which we call \emph{Haiman arrows}. See \cite{MS} (and also \cite{BK}) for details. 
These are arrows $\alpha$ in the character lattice $\mathbb{Z}^4$ such that:
\begin{itemize}
\item the tail $t(\alpha) \in \mathbb{Z}^4$ satisfies $(I_Z)_{t(\alpha)} \neq 0$, i.e.~it lies on a nonzero weight space of $I_Z$,
\item the head $h(\alpha) \in \mathbb{Z}^4$ satisfies $(\oO_Z)_{h(\alpha) + (n_1,n_2,n_3,n_4)} \neq 0$ for some $n_1,n_2,n_3,n_4 \geqslant0$.
\end{itemize}
Denote the standard basis of $\mathbb{Z}^4$ by
$$
e_1=(1,0,0,0), \quad e_2=(0,1,0,0), \quad e_3 = (0,0,1,0), \quad e_4=(0,0,0,1).
$$
Suppose $\alpha$ is a Haiman arrow such that the arrow defined by $t(\alpha) \pm e_i$, $h(\alpha) \pm e_i$, for some choice of $\pm$ and some basis vector $e_i$, is also a Haiman arrow. I.e.~the Haiman arrow $\alpha$ can be translated to another neighbouring Haiman arrow $\beta$. Then we call these Haiman arrows \emph{equivalent}. This induces an equivalence relation on the collection of all Haiman arrows. Next, we consider the collection $\mathcal{C}$ of equivalence classes $c$ of Haiman arrows such that all representatives $\alpha \in c$ satisfy $h(\alpha) \in (\oO_Z)_{h(\alpha)} \neq 0$. Then the elements of $\mathcal{C}$ are in 1-1 correspondence with a basis of $(\mathbb{C}^*)^4$-equivariant homomorphisms of $\Ext^0(I_Z,\oO_Z)$ as follows.
To each class $c \in \mathcal{C}$ we assign a module morphism $\phi_c : I_Z \rightarrow \oO_Z$, which is determined as follows. For each $\alpha \in c$ such that $t(\alpha)$ corresponds to a minimal homogeneous generator of $I_Z$, we define 
\begin{align*}
\phi_c(x^{t(\alpha)}) = x^{h(\alpha)}
\end{align*}
and all other minimal homogeneous generators are mapped to zero. Here we use multi-index notation $x^w := x_1^{w_1}x_2^{w_2}x_3^{w_3}x_4^{w_4}$. It is part of Haiman's theory that this is well-defined and defines a basis $\{\phi_c\}_{c \in \mathcal{C}}$ of $\Hom(I_Z,\oO_Z)$. Clearly the weight of $\phi_c$ equals
$$
h(\alpha) - t(\alpha),
$$
which is independent of the choice $\alpha \in c$. The statement we are after follows from the fact that any Haiman arrow $\beta$ with the property that $h(\beta) - t(\beta) = (n,n,n,n)$, for some $n$, is equivalent to a Haiman arrow $\gamma$ satisfying $(\oO_Z)_{h(\gamma)} = 0$, i.e.~$[\beta] \not\in \mathcal{C}$. We conclude $\Ext^0(I_Z,\oO_Z)^T=0$.
\end{proof}
\begin{exam} \label{maxsquared}
Suppose $I_Z := (x_1,x_2,x_3,x_4)^2$. Then $\mathcal{C}$ consists of 40 elements (implying that $\Ext^0(I_Z,\oO_Z)$ is 40-dimensional and $\Hilb^5(\mathbb{C}^4)$ is singular at $Z$). Explicitly, the basis $\phi_c$ described in the proof of the previous lemma consists of the following $40$ homomorphisms:
\begin{align*}
\phi_{ij} : x_i^2 \mapsto x_j, \quad \textrm{any other minimal homogeneous generator} \mapsto 0 \\
\phi_{abc} : x_a x_b \mapsto x_c, \quad \textrm{any other minimal homogeneous generator} \mapsto 0 
\end{align*} 
for all $i,j$ and $a,b,c$ with $a < b$. Observe that none of these homomorphisms has weight of the form $(n,n,n,n)$. 
Therefore $\Ext^0(I_Z,\oO_Z)^T=0$. 
\end{exam}

We continue with the definition of equivariant $\DT_4$ invariants.
For $Z\in \Hilb^n(X)^T$, one can form complex vector bundle
\begin{equation}
\begin{array}{lll}
      & \quad ET\times_{T}\Ext^{i}(I_Z,I_Z)
      \\  &  \quad \quad\quad \quad \downarrow \\   &   \quad  ET\times_{T}\{I_Z\}=BT
\end{array}\textrm{ }\textrm{for}\textrm{ } i=1,2, \nonumber\end{equation}
whose Euler class is the $T$-equivariant Euler class $e_T\big(\Ext^{i}(I_Z,I_Z)\big)$.

When $i=2$, the Serre duality pairing on $\Ext^{2}(\oO_Z,\oO_Z)$ defines a non-degenerate quadratic form $Q$ on $\Ext^{2}(I_Z,I_Z)$ (via Lemma \ref{lem on toric identify deform-obs}) and also on $ET\times_{T}\Ext^{2}(I_Z,I_Z)$ as $T$ preserves the Calabi-Yau volume form. We define 
\begin{equation}\label{half euler class equivariant}e_T\big(\Ext^{2}(I_Z,I_Z),Q\big)\in\mathbb{Z}[\lambda_1,\lambda_2,\lambda_3] \end{equation}
as the \textit{half Euler class} of $(ET\times_{T}\Ext^{2}(I_Z,I_Z),Q)$. By definition, this is the Euler class of its positive real form\,\footnote{I.e.~a half rank real subbundle on which $Q$ is real and positive definite.}, which exists 
because the classifying space $BT$ is simply connected. The half Euler class \eqref{half euler class equivariant} depends on 
a choice of orientation on a positive real form. \\

Following \cite[Sect.~8]{CL}, we can define the equivariant virtual class as follows:
\begin{defi}\label{def of equ virtual class} 
Let $X$ be a smooth quasi-projective toric Calabi-Yau 4-fold. Denote by $T\subseteq (\mathbb{C}^*)^4$ the three-dimensional subtorus which preserves the 
Calabi-Yau volume form.
The $T$-equivariant virtual class of $\Hilb^{n}(X)$ is 
\begin{equation}[\Hilb^{n}(X)]_{T,o(\lL)}^{\mathrm{vir}}:=\sum_{Z\in\Hilb^{n}(X)^T}\frac{e_T\big(\Ext^{2}(I_Z,I_Z),Q\big)}{e_T\big(\Ext^{1}(I_Z,I_Z)\big)}
\in\mathbb{Q}(\lambda_1,\lambda_2,\lambda_3), \nonumber \end{equation}
where $o(\lL)$ denotes a choice of orientation of a positive real form of $(ET\times_{T}\Ext^{2}(I_Z,I_Z),Q)$ for each $Z\in\Hilb^{n}(X)^T$. 
\end{defi}
Note that we have $\Ext^{i}(I_{Z},I_{Z})=\Ext^{i}(I_{Z},I_{Z})_0$ for $i=1,2$, because $H^{>0}(\oO_X) = 0$\,\footnote{See footnote \ref{equivtrace} on the existence of $T$-equivariant trace maps.}.
\begin{rmk}
For each $Z\in\Hilb^{n}(X)^T$, $o(\lL)$ is equivalent to the choice of sign in the square root (\ref{signeqn}).
If the number of fixed points $\Hilb^{n}(X)^T$ is $N$, the number of choices of $o(\lL)$ is $2^N$.
\end{rmk}

The $T$-equivariant version of Definition \ref{def of zero dim inv} is given as follows.
\begin{defi}
In the setup of Definition \ref{def of equ virtual class}, let $L$ be a $T$-equivariant line bundle on $X$ with corresponding tautological bundle
$L^{[n]}$ on $\Hilb^n(X)$. Then
\begin{equation}\DT_4(X,T,L,n\,;o(\lL)):=\sum_{Z\in\Hilb^{n}(X)^T}\frac{e_T\big(\Ext^{2}(I_Z,I_Z),Q\big)\cdot e_T(L^{[n]}|_Z)}{e_T\big(\Ext^{1}(I_Z,I_Z)\big)}\in\mathbb{Q}(\lambda_1,\lambda_2,\lambda_3),\, \textrm{if}\textrm{ } n\geqslant1, \nonumber \end{equation}
\begin{equation}\DT_4(X,T,L,0\,;o(\lL)):=1. \nonumber \end{equation}
\end{defi}
We recall the notion of equivariant push-forward for (not necessarily compact) manifolds with torus action (e.g.~toric Calabi-Yau 4-folds). In the compact case, this coincides with
the usual proper push-forward in the Atiyah-Bott localization formula.
\begin{defi}\label{equi push}
Let $X$ be a smooth manifold with $T\cong (\mathbb{C}^*)^k$-action such that the torus fixed locus $X^T$ consists of finite number of (necessarily reduced) points.
The equivariant push-forward of $\pi: X\to pt$ is 
\begin{equation}\int_X\textrm{ } : H^{*}_T(X) \to H^{*}_T(pt)_{\mathrm{loc}}, \quad\textrm{s.t.} \textrm{ }\int_X\alpha=\sum_{x\in X^T}\frac{\iota_x^*\alpha}{e_T(T_xX)}, \nonumber \end{equation}
where $H^{*}_T(pt)_{\mathrm{loc}}$ is the ring of fractions of $H^{*}_T(pt)$, which is isomorphic to $\mathbb{C}(\lambda_1,\cdots,\lambda_k)$ if we
identify $H^{*}_T(pt)\cong \mathbb{C}[\lambda_1,\cdots,\lambda_k]$, and $\iota_x: \{x\}\times_T ET\to X\times_T ET$ is the natural inclusion.
\end{defi} 
We propose the following $T$-equivariant version of Conjecture \ref{conj on zero dim dt4}.
\begin{conj}\label{conj for toric}
Let $X$ be a smooth quasi-projective toric Calabi-Yau 4-fold. Denote by $T\subseteq (\mathbb{C}^*)^4$ the three-dimensional subtorus which preserves the 
Calabi-Yau volume form. Let $L$ be a $T$-equivariant line bundle on $X$. Then there exist choices of orientation such that
\begin{equation}\sum_{n=0}^{\infty}\DT_4(X,T,L,n\,;o(\mathcal{L}))\,q^n=M(-q)^{\mathlarger{\int}_Xc_1^{T}(L)\,\cdot\,c_3^{T}(X)}, \nonumber \end{equation}
where $M(q)$ denotes the MacMahon function.
\end{conj}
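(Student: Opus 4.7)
The plan is to prove Conjecture~\ref{conj for toric} by equivariant localization combined with a vertex formalism that reduces the global toric statement to a local calculation on $\mathbb{C}^4$. By Lemma~\ref{Tfixlocus scheme}, the $T$-fixed scheme $\Hilb^n(X)^T = \Hilb^n(X)^{(\mathbb{C}^*)^4}$ consists of finitely many isolated reduced points, so the equivariant class is a sum over fixed points. Each fixed point $Z$ decomposes, under the cover of $X$ by maximal $(\mathbb{C}^*)^4$-invariant affine charts $U_\alpha \cong \mathbb{C}^4$, into a tuple of $(\mathbb{C}^*)^4$-invariant zero-dimensional subschemes $Z_\alpha \subseteq U_\alpha$, each one labelled by a solid partition $\pi^\alpha$ via formula~\eqref{Zpi}.

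The first main step is to set up a vertex formalism analogous to \cite{MNOP}. Concretely, I would express the $T$-equivariant $K$-theory class of $[\Ext^1(I_Z,I_Z)] - [\Ext^2(I_Z,I_Z)]$ as a sum of purely local contributions $\mathsf{V}_{\pi^\alpha}$ (one per vertex) plus gluing contributions along the $1$-dimensional and $2$-dimensional toric strata. The heuristic is that for zero-dimensional subschemes these higher-dimensional strata contribute trivially, so that only vertex terms survive. A parallel computation for the tautological bundle $L^{[n]}|_Z$, using its Čech-type description, yields a local factor $L_{\pi^\alpha}(d_1^\alpha, d_2^\alpha, d_3^\alpha, d_4^\alpha)$, where $(d_1^\alpha,\dots,d_4^\alpha)$ are the $T$-weights of $L$ at the fixed point inside $U_\alpha$. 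Taking the half Euler class with respect to $Q$ then produces a local weight $\mathsf{w}_{\pi^\alpha}$, defined up to the sign encoded in the orientation $o(\mathcal{L})$.

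Assuming the vertex formalism is in place, the generating series factorises as
\begin{equation*}
\sum_{n \geqslant 0} \DT_4(X,T,L,n;o(\mathcal{L}))\, q^n \;=\; \prod_\alpha \left( \sum_{\pi} L_\pi(d_1^\alpha,\dots,d_4^\alpha)\, \mathsf{w}_\pi\, q^{|\pi|} \right),
\end{equation*}
which reduces the problem to the affine case $X=\mathbb{C}^4$, namely Conjecture~\ref{affineconj intro}. To match exponents, I would apply equivariant localization to $\int_X c_1^T(L) \cdot c_3^T(X)$: each vertex $U_\alpha$ contributes $(d_1^\alpha \lambda_1 + \cdots + d_4^\alpha \lambda_4)(-\lambda_1\lambda_2\lambda_3 - \lambda_1\lambda_2\lambda_4 - \lambda_1\lambda_3\lambda_4 - \lambda_2\lambda_3\lambda_4)/(\lambda_1\lambda_2\lambda_3\lambda_4)$, exactly matching the exponent appearing in Conjecture~\ref{affineconj intro}. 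The compatibility of sign choices across vertices should follow from an orientation analysis modelled on \cite{CL2}, taking advantage of the product structure over charts.

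The hard part, and the true obstruction, is Conjecture~\ref{affineconj intro} itself. This has two components: the combinatorial identity for all solid partitions $\pi$, and the existence and uniqueness of the sign of $\mathsf{w}_\pi$. The identity involves solid partitions, for which no effective closed formula is available, so a direct manipulation of the sum seems out of reach; a more plausible route is to exploit a $(-2)$-shifted symplectic structure on the derived Hilbert scheme of $\mathbb{C}^4$ to obtain a canonical orientation, and then to match the resulting signed weights with the combinatorial weights $\omega_\pi^c$ of Definition~\ref{combinatoric wpi}, whose generating function is computed unconditionally in Theorem~\ref{combinatorial wpi thm}. Absent such structural input, one can at present only verify the conjecture in geometric situations where the local model is more tractable, as in Theorem~\ref{thm for equiv sm div intro} for $L = \oO_X(D)$ with $D$ a smooth toric divisor, where Proposition~\ref{section s intro} permits an explicit comparison with lower-dimensional $\DT_3$ vertex contributions.
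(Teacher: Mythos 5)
Your proposal follows essentially the same route as the paper: the vertex-formalism reduction of Conjecture~\ref{conj for toric} to the affine case $X=\mathbb{C}^4$ is precisely Proposition~\ref{compare equi conj} (built on Lemma~\ref{Vlem}, Definition~\ref{wpi} and Proposition~\ref{equi euler class vertex}, including your observation that overlaps $U_\alpha\cap U_\beta$ contribute trivially and that the exponent localizes vertex-by-vertex), and you correctly identify that the residual content is Conjecture~\ref{affineconj}, which the paper itself leaves open except for the smooth toric divisor case (Theorem~\ref{thm for equiv sm div}) and a verification modulo $q^7$ (Theorem~\ref{verify affine conj}). Since the statement is a conjecture, neither your plan nor the paper constitutes a complete proof, but your reduction and your assessment of where the genuine difficulty lies (the combinatorial identity over solid partitions together with the existence of consistent signs) coincide with the paper's.
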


\subsection{Proof for smooth toric divisors}

Let $L=\oO_X(D)$ for a $T$-invariant divisor $D\subseteq X$. 
Note that if $D$ is not $(\mathbb{C}^*)^4$-invariant, by the proof of Lemma \ref{Tfixlocus}, 
$D$ can locally be written as the sum of a $(\mathbb{C}^*)^4$-invariant divisor and a $T$-invariant divisor which is not $(\mathbb{C}^*)^4$-invariant. Hence, locally near each fixed point, $L$ is $T$-equivariantly isomorphic to a $(\mathbb{C}^*)^4$-equivariant line bundle. Therefore it suffices to consider Conjecture \ref{conj for toric} for $(\mathbb{C}^*)^4$-equivariant divisors only. 

We prove Conjecture \ref{conj for toric} when $D \subseteq X$ is a \emph{smooth} $(\mathbb{C}^*)^4$-equivariant divisor.
\begin{thm}\label{thm for equiv sm div}
Let $X$ be a smooth quasi-projective toric Calabi-Yau 4-fold. Denote by $T\subseteq (\mathbb{C}^*)^4$ the three-dimensional subtorus which preserves the Calabi-Yau volume form. Let $L = \oO_X(D)$, where $D \subseteq X$ is a smooth $(\mathbb{C}^*)^4$-invariant divisor.
Then Conjecture \ref{conj for toric} is true.
\end{thm}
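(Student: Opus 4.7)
The plan is to reduce to the affine case $X=\mathbb{C}^4$ via the vertex formalism of Proposition \ref{compare equi conj} and then verify the statement at each $(\mathbb{C}^*)^4$-fixed point. Since $D\subseteq X$ is smooth and $(\mathbb{C}^*)^4$-invariant, at each fixed point of $X$ through which $D$ passes one can choose $(\mathbb{C}^*)^4$-equivariant coordinates $x_1,\ldots,x_4$ with $D=\{x_4=0\}$ locally; then $L=\oO_X(D)\cong \oO\otimes t_4^{-1}$ as equivariant line bundles. At fixed points of $X$ not lying on $D$, the bundle $L$ is equivariantly trivial and the vertex is the standard zero-dimensional $\DT_4$ vertex with no insertion, which is handled by Conjecture \ref{affineconj intro} applied with $L$ trivial.

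First I would identify the contributing fixed points. For $Z_\pi\in\Hilb^n(\mathbb{C}^4)^T$ indexed by a solid partition $\pi$, the $T$-character of the tautological fibre is
\begin{equation*}
L^{[n]}|_{Z_\pi}=Z_\pi\cdot t_4^{-1}=\sum_{(i,j,k,l)\in\pi}t_1^{i-1}t_2^{j-1}t_3^{k-1}t_4^{l-2},
\end{equation*}
so that $e_T(L^{[n]}|_{Z_\pi})=\prod_{(i,j,k,l)\in\pi}\bigl((i-1)\lambda_1+(j-1)\lambda_2+(k-1)\lambda_3+(l-2)\lambda_4\bigr)$. This product vanishes as soon as the box $(1,1,1,2)$ lies in $\pi$, i.e.\ whenever $\pi_{111}\geqslant 2$. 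Hence only plane partitions $\pi$ with $\pi_{111}\leqslant 1$ contribute, and these are in bijection with $(\mathbb{C}^*)^3$-fixed zero-dimensional subschemes $Z\subseteq D\cong\mathbb{C}^3$.

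Next, for each contributing $Z=Z_\pi\subseteq D$, I would run the $T$-equivariant analogues of Lemmas \ref{lem on D/X}, \ref{def-obs on 3-fold}, \ref{def-obs on 4-fold} and Proposition \ref{compare def-obs}. The distinguished triangle obtained from $0\to\oO_X(-D)\to I_{Z,X}\to i_*I_{Z,D}\to 0$, the spectral sequence $\Ext^p_D(-,-\otimes\wedge^q K_D)\Rightarrow\Ext^{p+q}_X(i_*-,i_*-)$, and the trace diagram \eqref{tracediag} are all $(\mathbb{C}^*)^4$-equivariant on the affine chart (cf.\ footnote \ref{equivtrace}); the non-equivariant dimension counts of \eqref{dimensions} are replaced by a direct check from the solid-partition description. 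The outcome is $T$-equivariant short exact sequences
\begin{equation*}
0\to\Ext^1_D(I_{Z,D},I_{Z,D})_0\to\Ext^1_X(I_{Z,X},I_{Z,X})_0\to H^0(\oO_Z(D))\to 0,
\end{equation*}
\begin{equation*}
0\to\Ext^2_D(I_{Z,D},I_{Z,D})_0\to\Ext^2_X(I_{Z,X},I_{Z,X})_0\to\Ext^2_D(I_{Z,D},I_{Z,D})_0^{*}\to 0,
\end{equation*}
in which $\Ext^2_D(I_{Z,D},I_{Z,D})_0$ is a maximal isotropic subspace for $Q$, and $H^0(\oO_Z(D))\cong L^{[n]}|_Z$ as $T$-representations. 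The equivariant Proposition \ref{isotropic iso}, combined with the orientation choice which picks the Euler class of this isotropic subbundle, then simplifies the vertex weight to
\begin{equation*}
\frac{e_T\bigl(\Ext^2(I_Z,I_Z),Q\bigr)\cdot e_T(L^{[n]}|_Z)}{e_T\bigl(\Ext^1(I_Z,I_Z)\bigr)}=\frac{e_T\bigl(\Ext^2_D(I_{Z,D},I_{Z,D})_0\bigr)}{e_T\bigl(\Ext^1_D(I_{Z,D},I_{Z,D})_0\bigr)},
\end{equation*}
which is precisely the MNOP vertex weight for zero-dimensional $\DT_3$ invariants of the toric 3-fold $D$. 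Summing over plane partitions at each vertex and gluing along the fan reduces the conjecture to the theorem of Levine--Pandharipande \cite{LP} and Li \cite{Li}, yielding $M(-q)^{\int_D c_3^T(TD\otimes K_D)}$; identity \eqref{Chernmanip} (applied equivariantly, its expansion being forced by the non-equivariant case) rewrites the exponent as $\int_X c_1^T(L)\cdot c_3^T(X)$, establishing the claim. The principal difficulty lies in the last step of running \ref{compare def-obs} equivariantly and orchestrating the signs so that a single global choice of orientation reproduces the MNOP sign conventions at every vertex; as the entire comparison is local at each fixed point, no new issue arises beyond a $T$-equivariant re-enactment of the argument of Theorem \ref{n less 4}.
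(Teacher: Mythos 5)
Your overall architecture coincides with the paper's: you kill the fixed points with $Z\not\subseteq D$ using $L\cong\oO\otimes t_4^{-1}$ (this is exactly Lemma \ref{vanishing}), you reduce the surviving sum to the MNOP $\DT_3$ vertex of the toric $3$-fold $D$, and you finish with \cite{MNOP} and the equivariant version of \eqref{Chernmanip}, which the paper also verifies by an explicit Chern class computation. The problem is the middle step. You propose to obtain the comparison of deformation-obstruction theories at a fixed point $Z\subseteq D$ by re-running Lemmas \ref{lem on D/X}, \ref{def-obs on 3-fold}, \ref{def-obs on 4-fold} and Proposition \ref{compare def-obs} equivariantly, with ``the non-equivariant dimension counts of \eqref{dimensions} replaced by a direct check from the solid-partition description''. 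But those dimension counts are not merely unverified for general $n$ --- they are false. They encode the smoothness of $\Hilb^n(D)$ and $\Hilb^n(X)$, which holds only for $n\leqslant 3$; already for $I_{Z,D}=(x_1,x_2,x_3)^2$ one has $\dim_{\mathbb{C}}\Hom_D(I_{Z,D},\oO_Z)=18\neq 3n=12$. Since the splitting of the long exact sequences \eqref{LES} and \eqref{iso0.6 on X} into short exact sequences, and hence the existence of the maximal isotropic subspace $\Ext^2_D(I_{Z,D},I_{Z,D})_0\subseteq\Ext^2_X(I_{Z,X},I_{Z,X})_0$, is deduced in the paper precisely from these counts, your argument is missing a proof of its key input for all $|\pi|\geqslant 4$: whether the relevant connecting maps vanish for an arbitrary monomial $Z\subseteq D$ is a genuine question you would have to settle.

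The paper's proof avoids this entirely. For $Z\subseteq D$ it uses Lemma \ref{lem on toric identify deform-obs} to replace $\Ext^i(I_Z,I_Z)$ by $\Ext^i(\oO_Z,\oO_Z)$, applies the distinguished triangle $\RHom_D(\oO_Z,\oO_Z)\to\RHom_X(\oO_Z,\oO_Z)\to\RHom_D(\oO_Z,\oO_Z\otimes K_D)[-1]$, and works only with the resulting alternating sum in $K_T(\bullet)$ together with $T$-equivariant Serre duality. This yields the identity $e_T(\Ext^1_X)\,e_T(\Ext^3_X)/e_T(\Ext^2_X)=(-1)^n\bigl(e_T(\Ext^1_D)\,e_T(H^0(\oO_Z\otimes K_D))/e_T(\Ext^2_D)\bigr)^2$ without ever splitting a long exact sequence or invoking smoothness; the square root is then extracted via \eqref{signeqn}, with the resulting sign absorbed into the choice of orientation. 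If you replace your equivariant re-enactment of Proposition \ref{compare def-obs} by this $K$-theoretic computation (or independently establish your two short exact sequences and the isotropy statement for all monomial $Z\subseteq D$), the rest of your argument goes through.
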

\begin{proof}
For $Z\in\Hilb^n(X)^T$ such that $Z\not\subseteq D$, i.e.~$Z$ does not lie scheme theoretically in $D$, we claim that 
\begin{equation}\label{vani} e_T(L^{[n]}|_Z)=0. \end{equation}
Let $U \cong \mathbb{C}^4$ be any $(\mathbb{C}^*)^4$-invariant affine open subset of $X$. As $D$ is smooth and $(\mathbb{C}^*)^4$-invariant, we can choose coordinates $x_1,x_2,x_3,x_4$ on $U$ such that the action is given by
\begin{equation}t \cdot x_i = t_i x_i, \quad \textrm{for all } i=1,2,3,4, \nonumber \end{equation}
and $D\cap U$ is defined by $x_4=0$. Equation \eqref{vani} then follows from Lemma \ref{vanishing} below.

Now we only need to calculate 
\begin{equation}\label{equ sum}\sum_{Z\in\Hilb^n(X)^T,\, Z\subseteq D}\frac{e_T(\Ext^{2}_X(I_{Z,X},I_{Z,X}),Q)\cdot e_T(L^{[n]}|_Z)}{e_T(\Ext^{1}_X(I_{Z,X},I_{Z,X}))}. 
\end{equation}
For $Z\in\Hilb^n(X)^T$ and $Z\subseteq D\subseteq X$, Lemma \ref{lem on toric identify deform-obs} gives $T$-equivariant isomorphisms 
\begin{align*}\Ext^{i}_X(I_{Z,X},I_{Z,X}) &\cong \Ext^{i}_X(\oO_Z,\oO_Z), \textrm{ }\textrm{for}\textrm{ }i=1,2,3, \nonumber \\
\Ext^{i}_D(I_{Z,D},I_{Z,D}) &\cong \Ext^{i}_D(\oO_Z,\oO_Z), \textrm{ }\textrm{for}\textrm{ }i=1,2, \nonumber \end{align*}
where the isomorphisms on $D$ can be deduced similarly as for $X$.

From the $T$-equivariant distinguished triangle (e.g. \cite[Cor.~11.4, pp.~248--249]{Huy})
\begin{equation}\RHom_D(\oO_Z,\oO_Z)\to \RHom_X(\oO_Z,\oO_Z)\to \RHom_D(\oO_Z,\oO_Z\otimes K_D)[-1], \nonumber \end{equation}
we obtain a $T$-equivariant exact sequence
\begin{align*}
0&\to \Ext^1_D(\oO_Z,\oO_Z)\to \Ext^1_X(\oO_Z,\oO_Z)\to \Hom_D(\oO_Z,\oO_Z\otimes K_D)\to \nonumber \\
&\to \Ext^2_D(\oO_Z,\oO_Z)\to \Ext^2_X(\oO_Z,\oO_Z)\to \Ext^1_D(\oO_Z,\oO_Z\otimes K_D)\to \nonumber \\
&\to \Ext^3_D(\oO_Z,\oO_Z)\to \Ext^3_X(\oO_Z,\oO_Z)\to \Ext^2_D(\oO_Z,\oO_Z\otimes K_D)\to 0. \nonumber 
\end{align*}
By $T$-equivariant Serre duality, this gives 
\begin{equation}\Ext^{1}_X-\Ext^{2}_X+\Ext^{3}_X=\Ext^{1}_D+(\Ext^{1}_D)^*-(\Ext^{2}_D+(\Ext^{2}_D)^*) 
\nonumber \end{equation}
\begin{equation}+H^0(D,\oO_Z\otimes K_D)+H^0(D,\oO_Z\otimes K_D)^* \in K_T(\bullet) 
\nonumber \end{equation}
in the $T$-equivariant $K$-theory of a point, where we abbreviate $\Ext^{i}_A:=\Ext^{i}_A(\oO_Z,\oO_Z)$. 
For the corresponding Euler classes, we deduce
\begin{equation}\frac{e_T(\Ext^{1}_X)\cdot e_T(\Ext^{3}_X)}{e_T(\Ext^{2}_X)}=(-1)^{n}\cdot\Bigg(\frac{e_T(\Ext^{1}_D)\cdot e_T(H^{0}(D,\mathcal{O}_Z\otimes K_D))}{e_T(\Ext^{2}_D)}\Bigg)^2
. \nonumber \end{equation}
Therefore we have 
\begin{eqnarray*}\frac{e_T(\Ext^{2}_X(I_{Z,X},I_{Z,X}),Q)\cdot e_T(L^{[n]}|_Z)}{e_T(\Ext^{1}_X(I_{Z,X},I_{Z,X}))}  
&=& \frac{e_T(\Ext^{2}_X(I_{Z,X},I_{Z,X}),Q)\cdot e_T(H^{0}(X,\mathcal{O}_Z\otimes\mathcal{O}_{X}(D)))}{e_T(\Ext^{1}_X(I_{Z,X},I_{Z,X}))} \\
&=& \frac{e_T(\Ext^{2}_D(I_{Z,D},I_{Z,D}))}{e_T(\Ext^{1}_D(I_{Z,D},I_{Z,D}))},
\end{eqnarray*}
where we used \eqref{signeqn} and $L|_D = K_D$ ($X$ is Calabi-Yau). Moreover, the second equality is up to sign corresponding to the choice of orientation in defining the half Euler class.

Being a toric prime divisor, $D \subseteq X$ is itself a smooth toric 3-fold \cite[Sect.~3.1]{Ful}. As above, on any $(\mathbb{C}^*)^4$-invariant open $U \cong \mathbb{C}^4$ we can choose coordinates such that $t \cdot x_i = t_i x_i$, for all $i=1,2,3,4$, and $D \cap U = \{x_4=0\}$. In these coordinates, the torus of $D$ is obtained from $T = \{t_1t_2t_3t_4=1\}$ by setting $t_4=1$, i.e.~at the level of equivariant parameters we have $\lambda_1 + \lambda_2+\lambda_3=\lambda_4 = 0$. We conclude that (\ref{equ sum}) becomes the $T$-equivariant Donaldson-Thomas invariants 
of $n$ points on $D$ which, by \cite[II, Thm.~2]{MNOP}, are equal to 
\begin{equation}\sum_{Z\in\Hilb^n(D)^{T}}\frac{e_T(\Ext^{2}_D(I_{Z,D},I_{Z,D}))}{e_T(\Ext^{1}_D(I_{Z,D},I_{Z,D}))} \,q^n=
M(-q)^{\mathlarger{\int}_Dc_3^{T}(TD\otimes K_D)}. \nonumber \end{equation}
By the definition of equivariant push-forward (Def. \ref{equi push}), we have
\begin{eqnarray*}\int_Xc^T_3(X)\cdot c_1^T(L)&:=& \sum_{x\in X^{T}}\frac{\iota_x^*(c^T_3(X)\cdot c_1^T(L))}{c^T_4(T_xX)}  \\
&=& \sum_{x\in X^{T}}\frac{c^T_3(T_xX)\cdot c_1^T(L|_x)}{c^T_4(T_xX)} \\
&=& \sum_{x\in D^{T}}\frac{c^T_3(T_xX)\cdot c_1^T(L|_x)}{c^T_4(T_xX)}, 
\end{eqnarray*}
where $\iota_x: \{x\}\times_T ET\to X\times_T ET$ is the natural inclusion and the last equality follows from Lemma \ref{vanishing} below.
Similarly, we have 
\begin{eqnarray*}\int_Dc^T_3(TD\otimes K_D)&:=& \sum_{x\in D^{T}}\frac{\iota_x^*(c^T_3(TD\otimes K_D))}{c^T_3(T_xD)}  \\
&=& \sum_{x\in D^{T}}\frac{c^T_3(T_xD\otimes K_D|_x)}{c^T_3(T_xD)}. 
\end{eqnarray*}
From the $T$-equivariant short exact sequence
\begin{equation}0\to TD\to TX|_D\to K_D\to 0, \nonumber \end{equation}
we obtain
\begin{align*}
&c^T_3(T_xX)=c^T_3(T_xD)+ c_2^T(T_xD)\cdot c_1^T(K_D|_x), \quad c^T_4(T_xX)=c^T_3(T_xD)\cdot c_1^T(K_D|_x), \\ 
&c_3^T(T_xD \otimes K_D|_x) = c^T_3(T_xD)+ c_2^T(T_xD)\cdot c_1^T(K_D|_x) +c_1^T(T_xD) \cdot c_1^T(K_D|_x)^2 + c_1^T(K_D|_x)^3.
\end{align*}
Since $K_D|_x = \wedge^3 T^*_xD$, we have
$$
c_1^T(T_xD) \cdot c_1^T(K_D|_x)^2 + c_1^T(K_D|_x)^3 = (c_1^T(T_xD) + c_1^T(K_D|_x)) \cdot c_1^T(K_D|_x)^2 = 0
$$
and therefore $\int_Xc^T_3(X)\cdot c_1^T(L)=\int_Dc^T_3(TD\otimes K_D)$ for $L=\oO_X(D)$.
\end{proof}
In order to prove (\ref{vani}), let $X =\mathbb{C}^4$ with coordinates $x_1,x_2,x_3,x_4$ such that the action of $t\in(\mathbb{C}^*)^4$ satisfies 
\begin{equation}t \cdot x_i = t_i x_i, \quad \textrm{for all } i=1,2,3,4, \nonumber \end{equation}
and the $(\mathbb{C}^*)^4$-equivariant line bundle $L$ is given by 
\begin{equation}D:= \{x_4=0 \} \subseteq \mathbb{C}^4 \textrm{ }\textrm{and}\textrm{ } L:=\oO(D).  \nonumber \end{equation}
\begin{lem}\label{vanishing}
We have a $(\mathbb{C}^*)^4$-equivariant isomorphism $L^{[n]} \cong \oO^{[n]} \otimes t_4^{-1}$. Moreover, for any $Z \in \Hilb^n(\mathbb{C}^4)^T$ such that $Z$ does not lie scheme theoretically in $D$, we have 
\begin{equation}e_T(L^{[n]}|_{Z}) = 0.  \nonumber \end{equation}
\end{lem}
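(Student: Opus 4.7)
My plan is to first establish the equivariant isomorphism $L \cong \oO \otimes t_4^{-1}$ and then use the explicit description of the fixed point $Z_\pi$ in \eqref{Zpi} to show that one factor in the equivariant Euler class product becomes zero whenever $Z \not\subseteq D$.

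For the first claim, I would observe that the ideal sheaf $I_D = (x_4) \subseteq \oO_X$ is generated as an $\oO_X$-module by $x_4$, which is a section of weight $t_4$; consequently $\oO_X(-D) \cong \oO_X \otimes t_4$ as $(\mathbb{C}^*)^4$-equivariant line bundles, and dualizing gives $L = \oO_X(D) \cong \oO_X \otimes t_4^{-1}$. Since $t_4^{-1}$ is a trivial line bundle with a nontrivial character, the projection formula applied to the definition
\[
L^{[n]} = (\pi_M)_*\bigl(\oO_{\mathcal{Z}_n}\otimes \pi_X^*L\bigr)
\]
pulls the character out of the pushforward and yields $L^{[n]} \cong \oO^{[n]}\otimes t_4^{-1}$.

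For the vanishing, by Lemma \ref{Tfixlocus} the point $Z \in \Hilb^n(\mathbb{C}^4)^T$ is $(\mathbb{C}^*)^4$-invariant and hence corresponds to a solid partition $\pi$ whose $(\mathbb{C}^*)^4$-character is given by \eqref{Zpi}. Since $\oO^{[n]}|_Z = H^0(\oO_Z)$ has the same character as $Z_\pi$, tensoring by $t_4^{-1}$ shifts every $t_4$-weight by $-1$, and the equivariant Euler class becomes
\[
e_T(L^{[n]}|_Z) = \prod_{i,j,k\geqslant 1}\prod_{l=1}^{\pi_{ijk}}\bigl((i-1)\lambda_1+(j-1)\lambda_2+(k-1)\lambda_3+(l-2)\lambda_4\bigr).
\]
The box $(i,j,k,l) = (1,1,1,2)$ contributes the factor $0$, so it suffices to show that $(1,1,1,2)\in Z_\pi$ whenever $Z\not\subseteq D$.

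Finally, $Z\subseteq D=\{x_4=0\}$ scheme-theoretically precisely when $x_4 \in I_{Z_\pi}$, which translates to $\pi_{ijk}\leqslant 1$ for all $i,j,k\geqslant 1$. So $Z\not\subseteq D$ means $\pi_{ijk}\geqslant 2$ for some $(i,j,k)$, and the monotonicity $\pi_{111}\geqslant \pi_{ijk}$ built into the definition of a solid partition forces $\pi_{111}\geqslant 2$. Hence the box $(1,1,1,2)$ appears in the sum \eqref{Zpi}, its twisted weight is $0$, and $e_T(L^{[n]}|_Z)=0$. The only subtle step is really bookkeeping the equivariant weight of the tautological section of $\oO_X(-D)$ correctly; once that sign is fixed, the rest is a direct combinatorial identification using the solid partition structure.
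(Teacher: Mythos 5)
Your proof is correct and follows essentially the same route as the paper: identify $L\cong\oO\otimes t_4^{-1}$ from the weight of the defining section $x_4$, note that $Z\not\subseteq D$ forces $\pi_{111}\geqslant 2$ so the character of $\oO_Z\otimes t_4^{-1}$ contains the trivial weight (your box $(1,1,1,2)$, the paper's ``term $t_4$'' in $Z$), and conclude that the Euler class has a zero factor. The only cosmetic difference is that you write out the full weight product explicitly where the paper simply factors off $e_{(\mathbb{C}^*)^4}(1)=0$.
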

\begin{proof}
Consider the ideal sheaf $\oO(-D) \subseteq \oO$. This corresponds to the inclusion
\begin{equation}(x_4) \subseteq \mathbb{C}[x_1,x_2,x_3,x_4]  \nonumber \end{equation}
and therefore $\oO(-D) \cong \oO \otimes t_{4}$ and $L \cong \oO \otimes t_{4}^{-1}$. The fibres of $L^{[n]}$ are given by
\begin{equation}L^{[n]}|_{Z} \cong H^0(L|_Z) \cong H^0(\oO_Z) \otimes t_{4}^{-1},  \nonumber \end{equation}
where all isomorphisms are $(\mathbb{C}^*)^4$-equivariant isomorphisms. Hence, we have a $(\mathbb{C}^*)^4$-equivariant isomorphism
\begin{equation}L^{[n]} \cong \oO^{[n]} \otimes t_4^{-1}.  \nonumber \end{equation}

Now suppose $Z \in \Hilb^n(\mathbb{C}^4)$ is a $T$-fixed (and therefore $(\mathbb{C}^*)^4$-fixed) element. Then $Z$ corresponds to a solid partitions $\pi = \{\pi_{ijk}\}_{i,j,k \geqslant 1}$. Suppose $Z \not\subseteq D$, i.e.~$Z$ is not scheme theoretically contained in $D$, then 
$(x_4) \not\subseteq I_Z$. Therefore, $\pi_{111} > 1$ and the class of $Z$ in the $(\mathbb{C}^*)^4$-equivariant $K$-group $K_{(\mathbb{C}^*)^4}(\bullet)$ contains the term $t_4$. Hence
\begin{equation*}
e_{(\mathbb{C}^*)^4}(L^{[n]}|_{Z}) = e_{(\mathbb{C}^*)^4}(Z \otimes t_{4}^{-1}) = e_{(\mathbb{C}^*)^4}(1 + \mathrm{other \ terms}) = e_{(\mathbb{C}^*)^4}(1) \, e_{(\mathbb{C}^*)^4}(\mathrm{other \ terms}) = 0. 
\end{equation*}
This equality holds for $T$-equivariant Euler classes as well, which corresponds to setting $\lambda_4=-(\lambda_1+\lambda_2+\lambda_3)$.
\end{proof}

\subsection{Vertex formalism}

In order to prove Conjecture \ref{conj for toric}, it is in fact enough to prove it for affine space $\mathbb{C}^4$.
In this section, we develop the necessary vertex formalism from which this follows. We follow the original arguments developed in the 3-dimensional case by MNOP \cite{MNOP} very closely.

Let $X$ be a smooth quasi-projective toric Calabi-Yau 4-fold and let $\{U_\alpha\}$ be the cover by maximal $(\mathbb{C}^*)^4$-invariant affine open subsets. Let $Z \subseteq X$ be a $T$-invariant zero-dimensional subscheme (hence also $(\mathbb{C}^*)^4$-invariant by Lemma \ref{Tfixlocus}). For each $\alpha$, the restriction $Z_\alpha := Z|_{U_\alpha}$ corresponds to a solid partitions $\pi^{(\alpha)}$, as described previously, and we write
$$
I_\alpha := I_{Z_{\pi^{(\alpha)}}}.
$$ 
By footnote \ref{equivtrace}, we have $T$-equivariant trace maps and we can take the trace-free part
$$
-\dR\mathrm{Hom}_X(I_Z,I_Z)_0 \in K_T(\bullet).
$$
Denote the global section functor by $\Gamma(-)$. 
The local-to-global spectral sequence and calculation of sheaf cohomology with respect to the \v{C}ech cover $\{U_\alpha\}$ yields
$$
-\dR\mathrm{Hom}_X(I_Z,I_Z)_0 = \sum_{\alpha,i} (-1)^i \Big(\Gamma(U_\alpha, \oO_{U_\alpha}) -  \Gamma(U_\alpha, \mathcal{E}{\it{xt}}^i(I_\alpha,I_\alpha)) \Big).
$$
Here we use $H^{>0}(U_\alpha,-) = 0$, because $U_\alpha$ is affine. We also use that intersections $U_{\alpha} \cap U_{\beta} \cap \cdots$, 
with $\alpha \neq \beta$, do not contribute because $Z$ is zero-dimensional and therefore
$$
I_Z|_{U_{\alpha} \cap U_{\beta} \cap \cdots} = \oO_{U_{\alpha} \cap U_{\beta} \cap \cdots}.
$$
This reduced the calculation to 
$$
-\dR\mathrm{Hom}_{U_\alpha}(I_\alpha,I_\alpha)_0 = \sum_i (-1)^i \Big( \Gamma(U_\alpha, \oO_{U_\alpha}) -  \Gamma(U_\alpha, \mathcal{E}{\it{xt}}^i(I_\alpha,I_\alpha)) \Big).
$$
On $U_\alpha \cong \mathbb{C}^4$, we use coordinates $x_1,x_2,x_3, x_4$ such that the $(\mathbb{C}^*)^4$-action is given by
$$
t \cdot x_i = t_i x_i, \quad \textrm{for all } i=1,2,3,4. 
$$
Let $U:=U_\alpha$, $Z:=Z_\alpha$, $I:=I_\alpha$, $\pi := \pi^{(\alpha)}$, and $R:=\Gamma(\oO_{U_\alpha}) \cong \mathbb{C}[x_1,x_2,x_3,x_4]$. Consider class $[I]$ in the equivariant $K$-group $K_{(\mathbb{C}^*)^4}(U)$. By identifying $[R]$ with $1$, we obtain a ring isomorphism
$$
K_{(\mathbb{C}^*)^4}(U) \cong \mathbb{Z}[t_1^{\pm},t_2^{\pm},t_3^{\pm},t_4^{\pm}].
$$
The Laurent polynomial $\mathsf{P}(I)$ corresponding to $[I]$ under this isomorphism is called the Poincar\'e polynomial of $I$.
For any $w=(w_1,w_2,w_3,w_4) \in \mathbb{Z}^4$, we use multi-index notation
$$
t^w:= t_1^{w_1} t_2^{w_2} t_3^{w_3} t_4^{w_4}.
$$
Then $[R \otimes t^w] \in K_{(\mathbb{C}^*)^4}(U)$ corresponds to $t^w \in \mathbb{Z}[t_1^{\pm},t_2^{\pm},t_3^{\pm},t_4^{\pm}]$.
 
Define an involution $\overline{(\cdot)}$ on $K_{(\mathbb{C}^*)^4}(U)$ by $\mathbb{Z}$-linear extension of
$$
\overline{t^w} := t^{-w}.
$$ 
By definition, the trace map
$$
\tr : K_{(\mathbb{C}^*)^4}(U) \rightarrow \mathbb{Z}(\!(t_1,t_2,t_3,t_4)\!)
$$
corresponds to $(\mathbb{C}^*)^4$-equivariant restriction to the fixed point of $U$. 

Take a $(\mathbb{C}^*)^4$-equivariant graded free resolution
$$
0 \rightarrow F_s \rightarrow \cdots \rightarrow F_0 \rightarrow I \rightarrow 0,
$$
as in \cite{MNOP}, where 
$$
F_i = \bigoplus_j R \otimes t^{d_{ij}},
$$
for certain $d_{ij} \in \mathbb{Z}^4$. Then
\begin{equation} \label{Poin}
\mathsf{P}(I) = \sum_{i,j} (-1)^i t^{d_{ij}}.
\end{equation}
The $(\mathbb{C}^*)^4$-character of $\oO_Z$ is given by \eqref{Zpi} and can be expressed in terms of the Poincar\'e polynomial of $I$ as follows
\begin{equation} \label{Z}
Z = \sum_{i,j,k \geqslant1} \sum_{l=1}^{\pi_{ijk}} t_1^{i-1} t_2^{j-1} t_3^{k-1} t_4^{l-1}  = \tr(\oO_U - I) = \frac{1-\mathsf{P}(I)}{(1-t_1)(1-t_2)(1-t_3)(1-t_4)}.
\end{equation}
We deduce
\begin{align*}
\dR\mathrm{Hom}_{U}(I,I) &= \sum_{i,j,k,l} (-1)^{i+k} \mathrm{Hom}(R \otimes t^{d_{ij}},R \otimes t^{d_{kl}}) \\
&= \sum_{i,j,k,l} (-1)^{i+k} R \otimes t^{d_{kl} - d_{ij}} \\
&=\mathsf{P}(I)\overline{\mathsf{P}(I)}\\
\tr_{\dR\mathrm{Hom}_{U}(I,I)}&= \frac{\mathsf{P}(I)\overline{\mathsf{P}(I)} }{(1-t_1)(1-t_2)(1-t_3)(1-t_4)},
\end{align*}
where we used \eqref{Poin} for the third equality. Eliminating $\mathsf{P}(I)$ by using \eqref{Z}, the trace of
$-\dR\mathrm{Hom}_{U_\alpha}(I_\alpha,I_\alpha)_0$ is then given by
\begin{equation} \label{defV}
\mathsf{V}_\alpha := Z_\alpha + \frac{\overline{Z}_{\alpha}}{t_1t_2t_3t_4} - \frac{Z_\alpha \overline{Z}_\alpha(1-t_1)(1-t_2)(1-t_3)(1-t_4)}{t_1t_2t_3t_4},
\end{equation}
where we re-introduced the index $\alpha$. Summing up, we have proved the following lemma:
\begin{lem} \label{Vlem}
Let $Z \subseteq X$ be a $T$-fixed zero-dimensional subscheme. Then
\begin{equation*} 
\tr_{-\dR\mathrm{Hom}_X(I_Z,I_Z)_0} = \sum_{\alpha} \tr_{-\dR\mathrm{Hom}_{U_\alpha}(I_{Z_\alpha},I_{Z_\alpha})_0}= \sum_\alpha \mathsf{V}_\alpha,
\end{equation*}
where the equivariant vertex $\mathsf{V}_\alpha$ is defined by \eqref{defV}.
\end{lem}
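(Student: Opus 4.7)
The plan is to read off the lemma from the computation that has already been assembled in the paragraphs immediately preceding the statement. The one genuinely new observation needed is the vanishing of contributions from multiple intersections in the \v{C}ech computation; once that is in hand, the rest is bookkeeping.

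First I would set up the \v{C}ech cover by the maximal $(\mathbb{C}^*)^4$-invariant affines $\{U_\alpha\}$ and combine the local-to-global spectral sequence $H^p(X,\mathcal{E}{\it{xt}}^q(I_Z,I_Z))\Rightarrow \mathrm{Ext}^{p+q}(I_Z,I_Z)$ with the \v{C}ech description of sheaf cohomology. Since each $U_\alpha$ is affine, the higher sheaf cohomology there vanishes. For any intersection $U_{\alpha}\cap U_{\beta}\cap\cdots$ with $\alpha\neq\beta$, the subscheme $Z$ is zero-dimensional and its $T$-fixed (equivalently $(\mathbb{C}^*)^4$-fixed) support is concentrated at distinct torus-fixed points of the different charts, so $Z$ does not meet the intersection. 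Hence $I_Z$ restricts to $\oO$ there and the trace-free Ext on the intersection is zero. This collapses the \v{C}ech–local-to-global double complex to a sum over the $U_\alpha$ alone and yields
\begin{equation*}
-\dR\mathrm{Hom}_X(I_Z,I_Z)_0 \;=\; \sum_\alpha -\dR\mathrm{Hom}_{U_\alpha}(I_{Z_\alpha},I_{Z_\alpha})_0
\end{equation*}
in $K_T(\bullet)$, which is the first equality of the lemma.

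Second, I would perform the per-chart computation on $U_\alpha\cong \mathbb{C}^4$. Choose a $(\mathbb{C}^*)^4$-equivariant graded free resolution $F_\bullet\to I_\alpha$ with $F_i=\bigoplus_j R\otimes t^{d_{ij}}$, so that the Poincar\'e polynomial $\mathsf{P}(I_\alpha)=\sum_{i,j}(-1)^i t^{d_{ij}}$ represents $[I_\alpha]$ in $K_{(\mathbb{C}^*)^4}(U_\alpha)$. Computing $\dR\mathrm{Hom}(F_\bullet,F_\bullet)$ termwise gives $\tr_{\dR\mathrm{Hom}(I_\alpha,I_\alpha)}=\mathsf{P}(I_\alpha)\,\overline{\mathsf{P}(I_\alpha)}/\prod_i(1-t_i)$. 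Removing the trace part (which equals $\Gamma(U_\alpha,\oO_{U_\alpha})$, i.e.\ $1/\prod_i(1-t_i)$) yields the expression for the trace-free part. Here one uses footnote \ref{equivtrace} to ensure the equivariant trace map is defined.

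Third, I would eliminate the Poincar\'e polynomial via the identity $Z_\alpha=(1-\mathsf{P}(I_\alpha))/\prod_i(1-t_i)$ recorded in \eqref{Z}, i.e.\ $\mathsf{P}(I_\alpha)=1-Z_\alpha\prod_i(1-t_i)$. Substituting and expanding $\mathsf{P}(I_\alpha)\overline{\mathsf{P}(I_\alpha)}$ produces, after cancellation of the $1/\prod(1-t_i)$ factor coming from the trace, exactly
\begin{equation*}
\mathsf{V}_\alpha \;=\; Z_\alpha+\frac{\overline{Z}_\alpha}{t_1t_2t_3t_4}-\frac{Z_\alpha\overline{Z}_\alpha\prod_i(1-t_i)}{t_1t_2t_3t_4},
\end{equation*}
using $\overline{1}=1$ and the substitution rule $\overline{t^w}=t^{-w}$. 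Summing over $\alpha$ finishes the proof. The only nontrivial step is the vanishing of intersection contributions in the first stage; everything else is a direct, already-motivated calculation.
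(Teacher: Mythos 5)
Your proposal is correct and follows essentially the same route as the paper: the \v{C}ech/local-to-global reduction with vanishing on multiple intersections (which you justify correctly via the $T$-fixed, zero-dimensional support sitting at the distinct chart origins), followed by the free-resolution computation of $\mathsf{P}(I_\alpha)\overline{\mathsf{P}(I_\alpha)}$ and the elimination of $\mathsf{P}(I_\alpha)$ via \eqref{Z} to arrive at \eqref{defV}. No gaps.
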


For a fixed $\alpha$, after specialization $t_1t_2t_3t_4=1$, we have 
\begin{align*}
\mathsf{V}_\alpha &= \mathrm{Ext}^1_{U_\alpha}(I_{Z_\alpha},I_{Z_\alpha}) + \mathrm{Ext}^3_{U_\alpha}(I_{Z_\alpha},I_{Z_\alpha}) 
-\mathrm{Ext}^2_{U_\alpha}(I_{Z_\alpha},I_{Z_\alpha}) \\
&= \mathrm{Ext}^1_{U_\alpha}(I_{Z_\alpha},I_{Z_\alpha}) + \mathrm{Ext}^1_{U_\alpha}(I_{Z_\alpha},I_{Z_\alpha})^*-
\mathrm{Ext}^2_{U_\alpha}(I_{Z_\alpha},I_{Z_\alpha}),  
\end{align*}
where each $\mathrm{Ext}^{i}_{U_\alpha}(I_{Z_\alpha},I_{Z_\alpha})$, with $i \neq 0$, is a finite-dimensional $T$-representation by Lemma \ref{lem on toric identify deform-obs} and $\mathrm{Ext}^2_{U_\alpha}(I_{Z_\alpha},I_{Z_\alpha})$ is self-dual. Consequently
\begin{equation}e_T(-\mathsf{V}_\alpha)=(-1)^{\dim_{\mathbb{C}}\mathrm{Ext}^1_{U_\alpha}(I_{Z_\alpha},I_{Z_\alpha})}\cdot
\frac{e_T(\mathrm{Ext}^2_{U_\alpha}(I_{Z_\alpha},I_{Z_\alpha}))}
{e_T(\mathrm{Ext}^1_{U_\alpha}(I_{Z_\alpha},I_{Z_\alpha}))^2}.  \nonumber \end{equation}
Since the Serre duality pairing on $\mathrm{Ext}^2_{U_\alpha}(I_{Z_\alpha},I_{Z_\alpha})$ is $T$-invariant, 
there exists a half Euler class $e_T\big(\mathrm{Ext}^2_{U_\alpha}(I_{Z_\alpha},I_{Z_\alpha}),Q\big)$ as in (\ref{half euler class equivariant}).
By its property (\ref{half Euler}), we know
\begin{equation}e_T\big(\mathrm{Ext}^2_{U_\alpha}(I_{Z_\alpha},I_{Z_\alpha}),Q\big)^2=(-1)^{\frac{1}{2}\dim_{\mathbb{C}}
\mathrm{Ext}^2_{U_\alpha}(I_{Z_\alpha},I_{Z_\alpha})}\cdot e_T(\mathrm{Ext}^2_{U_\alpha}(I_{Z_\alpha},I_{Z_\alpha})). 
\nonumber  \end{equation}
Denoting the length of the zero-dimensional subscheme $Z_\alpha$ by $n_\alpha$ and using $\chi(\oO_{U_\alpha}) - \chi(I_\alpha,I_\alpha) = 2n_\alpha$, we obtain 
\begin{equation}\label{equi square equality}e_T(-\mathsf{V}_\alpha)=(-1)^{n_\alpha}\cdot
\Bigg(\frac{e_T(\mathrm{Ext}^2_{U_\alpha}\big(I_{Z_\alpha},I_{Z_\alpha}),Q\big)}
{e_T(\mathrm{Ext}^1_{U_\alpha}(I_{Z_\alpha},I_{Z_\alpha}))}\Bigg)^2. \end{equation}
\begin{defi} \label{wpi}
Let $\pi$ be a solid partition of size $|\pi|$ and let $\mathsf{V}_\pi$ be the expression defined by \eqref{defV}, where $Z$ is the $T$-invariant 
zero-dimensional subscheme determined by (\ref{Z}).
We define
$$
\mathsf{w}_\pi:=\pm\sqrt{(-1)^{|\pi|}\cdot e_T(-\mathsf{V}_\pi)}
\in \mathbb{Q}(\lambda_1,\lambda_2,\lambda_3,\lambda_4) / (\lambda_1+\lambda_2+\lambda_3+\lambda_4),
$$
i.e.~the square root of $(-1)^{|\pi|}$ times (\ref{equi square equality}). We only define $\mathsf{w}_{\pi}$ up to a sign $\pm$. 
\end{defi}

From Lemma \ref{Vlem} and Definition \ref{wpi}, we conclude:
\begin{prop}\label{equi euler class vertex}
Let $Z \subseteq X$ be a $T$-fixed zero-dimensional subscheme. Suppose the restriction $Z|_{U_\alpha} \subseteq U_\alpha$ corresponds to a solid partition $\pi^{(\alpha)}$. Then
$$
\frac{e_T\big(\Ext^{2}(I_Z,I_Z),Q\big)}{e_T\big(\Ext^{1}(I_Z,I_Z)\big)}  = \pm \prod_{\alpha} \mathsf{w}_{\pi^{(\alpha)}}.
$$
\end{prop}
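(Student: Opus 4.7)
The plan is to establish the identity after squaring both sides and then to take square roots; the single $\pm$ in the conclusion matches the sign ambiguity already built into Definition \ref{wpi}. Specifically, I will prove
\begin{equation*}
\prod_\alpha \mathsf{w}_{\pi^{(\alpha)}}^{2} \;=\; \left(\frac{e_T\bigl(\Ext^{2}(I_Z,I_Z),Q\bigr)}{e_T\bigl(\Ext^{1}(I_Z,I_Z)\bigr)}\right)^{\!2},
\end{equation*}
from which the proposition follows immediately.

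For the left-hand side, write $n_\alpha = |\pi^{(\alpha)}|$ and $n = \sum_\alpha n_\alpha$. By Definition \ref{wpi} one has $\mathsf{w}_{\pi^{(\alpha)}}^2 = (-1)^{n_\alpha} e_T(-\mathsf{V}_{\pi^{(\alpha)}})$, so $\prod_\alpha \mathsf{w}_{\pi^{(\alpha)}}^2 = (-1)^n \prod_\alpha e_T(-\mathsf{V}_{\pi^{(\alpha)}})$. Lemma \ref{Vlem} identifies $\sum_\alpha \mathsf{V}_{\pi^{(\alpha)}}$ with the $T$-character of the virtual representation $-\dR\mathrm{Hom}_X(I_Z,I_Z)_0$, and the equivariant Euler class is multiplicative on sums of $T$-characters, hence
\begin{equation*}
\prod_\alpha e_T(-\mathsf{V}_{\pi^{(\alpha)}}) \;=\; e_T\bigl(\dR\mathrm{Hom}_X(I_Z,I_Z)_0\bigr).
\end{equation*}
Consequently, proving the displayed squared identity is equivalent to proving the global version of \eqref{equi square equality}, namely
\begin{equation*}
(-1)^{n}\, e_T\bigl(\dR\mathrm{Hom}_X(I_Z,I_Z)_0\bigr) \;=\; \left(\frac{e_T\bigl(\Ext^{2}(I_Z,I_Z),Q\bigr)}{e_T\bigl(\Ext^{1}(I_Z,I_Z)\bigr)}\right)^{\!2}.
\end{equation*}

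I would prove this by running the derivation of \eqref{equi square equality} verbatim with $X$ in place of $U_\alpha$. The four necessary ingredients are all globally available: Lemma \ref{lem on toric identify deform-obs} together with the Calabi-Yau condition $H^{>0}(\oO_X)=0$ gives $\dR\mathrm{Hom}_X(I_Z,I_Z)_0 = -[\Ext^1] + [\Ext^2] - [\Ext^3]$ in $K_T(\bullet)$; $T$-equivariant Serre duality on a toric compactification (as in footnote \ref{compactification}), valid because $\oO_Z$ has proper support and $T$ preserves the Calabi-Yau volume form, yields $\Ext^3(I_Z,I_Z) \cong \Ext^1(I_Z,I_Z)^*$ as $T$-representations and hence $e_T(\Ext^3) = (-1)^{\dim \Ext^1}e_T(\Ext^1)$; the half Euler class identity \eqref{half Euler} gives $e_T(\Ext^2,Q)^{2} = (-1)^{\dim \Ext^2/2}e_T(\Ext^2)$; and Hirzebruch-Riemann-Roch (on the compactification) gives $\chi(\oO_X) - \chi(I_Z,I_Z) = 2n$, equivalent to $\dim\Ext^2 = 2\dim\Ext^1 - 2n$ and hence $\dim\Ext^1 + \dim\Ext^2/2 \equiv n \pmod{2}$. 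Combining these four observations in the same manner as the text's derivation of \eqref{equi square equality} yields the displayed identity; taking square roots then completes the proof. The main obstacle is purely bookkeeping of signs: one must track the factors of $(-1)$ contributed by Serre duality on $\Ext^3$, by the half Euler class on $\Ext^2$, and by the parity of $\dim\Ext^2/2$, and verify that they assemble into the single factor $(-1)^n$ on the right-hand side.
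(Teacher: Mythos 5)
Your proposal is correct and follows essentially the same route as the paper: the paper's proof is precisely the combination of Lemma \ref{Vlem}, the multiplicativity of equivariant Euler classes over the local vertices, and the squared identity \eqref{equi square equality} (whose derivation via Serre duality, the half-Euler-class relation \eqref{half Euler}, and the Riemann--Roch computation $\chi(\oO)-\chi(I_Z,I_Z)=2n$ you reproduce globally), followed by taking square roots in the field $\mathbb{Q}(\lambda_1,\lambda_2,\lambda_3)$. Your sign bookkeeping ($\dim\Ext^1+\tfrac{1}{2}\dim\Ext^2\equiv n \pmod 2$) checks out.
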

${}$ 

\subsection*{Insertions}

Let $L$ be a $(\mathbb{C}^*)^4$-equivariant line bundle on $X$. For each $\alpha$, there exists a character $d^{(\alpha)} = (d^{(\alpha)}_1,d^{(\alpha)}_2,d^{(\alpha)}_3,d^{(\alpha)}_4) \in \mathbb{Z}^4$ such that 
$$
L|_{U_\alpha} = \oO_{U_{\alpha}} \otimes t^{d^{(\alpha)}}.
$$
As above, write $U:=U_\alpha$, $d:=d^{(\alpha)}$, and suppose we have the standard torus action $t \cdot x_i = t_i x_i$ for all $i=1,2,3,4$. Let $Z\subseteq U$ be a 0-dimensional $T$-fixed subscheme corresponding to a solid partition $\pi$. Then we define
$$
L_\pi(d_1,d_2,d_3,d_4) := e_T\big( H^0(U,\oO_{Z}\otimes L|_{U})\big) \in \mathbb{Q}(\lambda_1,\lambda_2,\lambda_3,\lambda_4) / (\lambda_1+\lambda_2+\lambda_3+\lambda_4),
$$
where 
$$
H^0(U,\oO_{Z}\otimes L|_{U}) = \sum_{i,j,k \geqslant 1} \sum_{l=1}^{\pi_{ijk}} t_1^{d_1+i-1} t_2^{d_2+j-1} t_3^{d_3+k-1} t_4^{d_4+l-1}.
$$
Then for any $Z \subseteq X$ we have
$$
e_T(L^{[n]})|_Z = \prod_{\alpha}  L_{\pi^{(\alpha)}}(d^{(\alpha)}_1,d^{(\alpha)}_2,d^{(\alpha)}_3,d^{(\alpha)}_4). 
$$
\begin{exam}
Let $Z_\pi = 1+t_1+t_4$. The corresponding solid partition $\pi$ satisfies  
\begin{equation}\pi_{111}=2, \quad \pi_{211}=1, \quad \pi_{ijk}=0,\,\textrm{ }\mathrm{otherwise}. \nonumber \end{equation}
Hence  
$I_{Z_\pi}=\langle x^2_1,x_1x_4, x^2_4, x_2, x_3 \rangle$.
After specialization $t_1t_2t_3t_4=1$, we get  
{\scriptsize{
\begin{align*}
\mathsf{V}_\pi=&\Big(t^{3}_1t^2_2t^2_3-t^3_1t^2_2t_3-t^3_1t_2t^2_3+t^3_1t_2t_3-t_1t^2_2t^2_3+t_1t^2_2t_3+t_1t_2t^2_3\\
&+2t_1t_2t_3-2t_1t_2+2t_1+t_1t^{-1}_3+t_1t^{-1}_2-2t_1t_3-t_1t^{-1}_2t^{-1}_3+t_2+t_3-t_2t_3 \Big)+ \\
&\Big(t^{-3}_1t^{-2}_2t^{-2}_3-t^{-3}_1t^{-2}_2t^{-1}_3-t^{-3}_1t^{-1}_2t^{-2}_3+t^{-3}_1t^{-1}_2t^{-1}_3-t^{-1}_1t^{-2}_2t^{-2}_3
+t^{-1}_1t^{-2}_2t^{-1}_3+t^{-1}_1t^{-1}_2t^{-2}_3 \\
&+2t^{-1}_1t^{-1}_2t^{-1}_3-2t^{-1}_1t^{-1}_2+2t_1^{-1}+t^{-1}_1t_3+t^{-1}_1t_2-2t^{-1}_1t^{-1}_3-t^{-1}_1t_2t_3+t_2^{-1}+t_3^{-1}-t^{-1}_2t^{-1}_3\Big),
\end{align*} }}
where all terms come in Serre dual pairs. One readily calculates
{\scriptsize{
\begin{align*}
&\mathsf{w}_\pi = \pm \frac{(\lambda_1+\lambda_2)^2(\lambda_1+\lambda_3)^2(\lambda_2+\lambda_3)(\lambda_1-\lambda_2-\lambda_3)(\lambda_1+2\lambda_2+2\lambda_3)(3\lambda_1+2\lambda_2+\lambda_3)(3\lambda_1+\lambda_2+2\lambda_3)}{\lambda_1^2 \lambda_2 \lambda_3(\lambda_1-\lambda_2)(\lambda_1-\lambda_3)(\lambda_1+\lambda_2+\lambda_3)^2(\lambda_1+2\lambda_2+\lambda_3)(\lambda_1+\lambda_2+2\lambda_3)(3\lambda_1+\lambda_2+\lambda_3)(3\lambda_1+2\lambda_2+2\lambda_3)}, \\
& \\ 
& L_\pi(d_1,d_2,d_3,d_4) = \big((d_1-d_4)\lambda_1+(d_2-d_4)\lambda_2+(d_3-d_4)\lambda_3\big)\big((d_1-d_4+1)\lambda_1+(d_2-d_4)\lambda_2+(d_3-d_4)\lambda_3\big) \\
&\qquad\qquad\qquad\qquad\quad \cdot\big((d_1-d_4-1)\lambda_1+(d_2-d_4-1)\lambda_2+(d_3-d_4-1)\lambda_3\big),
\end{align*}
}}
where we used $\lambda_4 = -\lambda_1-\lambda_2-\lambda_3$.
\end{exam}
The following conjecture is a \emph{combinatorial version} of Conjecture \ref{conj for toric} when $X=\mathbb{C}^4$.
\begin{conj} \label{affineconj}
There exists a way of choosing the signs for the equivariant weights $\mathsf{w}_{\pi}$ in Definition \ref{wpi} such that the following identity holds in $\frac{\mathbb{Q}(\lambda_1,\lambda_2,\lambda_3,\lambda_4)}{(\lambda_1+\lambda_2+\lambda_3+\lambda_4)}(d_1,d_2,d_3,d_4)[\![q]\!]$
\begin{equation} 
\sum_{\pi} L_\pi(d_1,d_2,d_3,d_4) \, \mathsf{w}_\pi \,  q^{|\pi|}=M(-q)^{\frac{(d_1 \lambda_1+d_2 \lambda_2+d_3 \lambda_3+d_4 \lambda_4)(-\lambda_1\lambda_2\lambda_3-\lambda_1\lambda_2\lambda_4-\lambda_1\lambda_3\lambda_4-\lambda_2\lambda_3\lambda_4)}{\lambda_1\lambda_2\lambda_3\lambda_4}},
\nonumber \end{equation}
where the sum is over all solid partitions and $M(q)$ denotes the MacMahon function.
\end{conj}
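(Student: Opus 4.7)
The plan is to reduce the conjecture to sufficiently many special instances by combining polynomiality in $(d_1,d_2,d_3,d_4)$ with the smooth toric divisor case established in Theorem \ref{thm for equiv sm div}, linked via the vertex formalism. First I would verify that at each fixed coefficient of $q^n$ both sides are polynomials in $(d_1,\ldots,d_4)$ of degree at most $n$ with coefficients in $\mathbb{Q}(\lambda_1,\ldots,\lambda_4)/(\lambda_1+\cdots+\lambda_4)$. For the left-hand side this is immediate from $L_\pi(d_1,\ldots,d_4)$ being polynomial of degree $|\pi|$ in the $d_i$, while for the right-hand side one uses that the exponent $P = (d_1\lambda_1+\cdots+d_4\lambda_4)(-\lambda_1\lambda_2\lambda_3-\cdots-\lambda_2\lambda_3\lambda_4)/(\lambda_1\lambda_2\lambda_3\lambda_4)$ is linear in $d$, so the $q^n$ coefficient of $M(-q)^P = \exp(P\log M(-q))$ is a polynomial in $P$, and hence in $d$, of degree at most $n$.

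Next I would feed Theorem \ref{thm for equiv sm div} into Proposition \ref{equi euler class vertex}: for every smooth quasi-projective toric Calabi-Yau 4-fold $X$ with a smooth $(\mathbb{C}^*)^4$-invariant divisor $D$, the global identity holds and factorizes as a product over $(\mathbb{C}^*)^4$-fixed points of $X$, with one vertex factor for each local character $d^{(\alpha)}$ of $L = \oO_X(D)$. Taking $X = \mathbb{C}^4$ with $D = \{x_i = 0\}$ gives the vertex identity at $d = -e_i$, and together with the trivial case $L \cong \oO$ (both sides equal to $1$ since $e(\oO^{[n]}) = 0$) this yields five verified specializations. Examining smooth toric divisors on more elaborate toric Calabi-Yau 4-folds produces further constrained $d$-data at each vertex, which one would try to collect systematically and then interpolate.

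The hardest step will be the interpolation: smooth toric divisors alone only realize $d \in \{0, -e_1, -e_2, -e_3, -e_4\}$ at any fixed vertex, which is far from Zariski dense in $\mathbb{Z}^4$, so direct high-degree polynomial interpolation fails. To make the argument work I would try to prove a rigidity statement, namely that $\log$ of the left-hand side is itself \emph{linear} in $(d_1,d_2,d_3,d_4)$, so that the five specializations above already determine the identity. This linearity is essentially the content of the conjecture and would presumably require an analogue of the degeneration and virtual localization arguments of MNOP, adapted to the $\DT_4$ setting through Joyce's D-manifold framework, together with a coherent global choice of signs for $\mathsf{w}_\pi$ (whose existence is part of the conjecture and should be treated as a Pfaffian/orientation problem on the auxiliary obstruction bundle). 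As a fallback, one could try to deduce the present identity as a cohomological limit of Nekrasov's conjectural K-theoretic partition function on $\Hilb^n(\mathbb{C}^4)$, but that reduction looks at least as hard as the original conjecture.
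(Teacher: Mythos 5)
You should first be aware that the statement you are proving is stated as a \emph{conjecture} in the paper: the authors do not prove it. Their support consists of (i) the smooth toric divisor case, Theorem \ref{thm for equiv sm div} combined with Proposition \ref{compare equi conj}, which covers exactly the specializations $d\in\{0,-e_1,-e_2,-e_3,-e_4\}$ that you identify, and (ii) a computer verification modulo $q^7$ (Theorem \ref{verify affine conj}). So there is no proof in the paper against which your argument could be matched, and the real question is whether your proposal closes the gap the authors left open. It does not.

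The fatal step is the interpolation, and the problem is worse than ``five points are too few for a degree-$n$ polynomial in four variables'' (though that is already decisive for $n\geqslant 2$). The five available data points are \emph{degenerate}: by Lemma \ref{vanishing}, at $d=-e_4$ one has $e_T(L^{[n]}|_Z)=0$, i.e.\ $L_\pi(0,0,0,-1)=0$, for every solid partition with $\pi_{111}\geqslant 2$, and similarly for the other coordinate hyperplanes; at $d=0$ the entire left-hand side collapses to $1$. Hence the smooth-divisor specializations carry no information at all about $\mathsf{w}_\pi$ for partitions of height $\geqslant 2$ in any direction, and in particular cannot constrain, let alone determine, the signs of those weights --- which is precisely the content of the conjecture (cf.\ Remark \ref{existencesgns}, where the authors describe the $2^{140}$ sign choices already at order $q^6$). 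Your proposed rescue, a ``rigidity statement'' asserting that the logarithm of the left-hand side is linear in $(d_1,\dots,d_4)$, is, as you yourself concede, essentially a restatement of the conjecture; invoking it makes the argument circular. The fallback via Nekrasov's K-theoretic partition function is likewise conjectural (Appendix B, Conjecture \ref{nek conj}) and is only checked modulo $q^7$ in the paper. What you have written is therefore a correct description of the known evidence and of why the obvious reduction fails, not a proof.
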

Combining Conjecture \ref{affineconj} with the vertex formalism, we can deduce Conjecture \ref{conj for toric}.
\begin{prop}\label{compare equi conj}
Conjecture \ref{affineconj} is equivalent to Conjecture \ref{conj for toric}.
\end{prop}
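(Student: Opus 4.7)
The plan is to combine the vertex factorization of Lemma \ref{Vlem} and Proposition \ref{equi euler class vertex} with equivariant localization in order to translate the global statement on $X$ into a product of local statements on $\mathbb{C}^4$ at each $(\mathbb{C}^*)^4$-fixed point.

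First, I would factorize the $\DT_4$ generating series. By Lemma \ref{Tfixlocus}, every $Z \in \Hilb^n(X)^T$ decomposes as a disjoint union $Z = \bigsqcup_\alpha Z_\alpha$ with $Z_\alpha \subseteq U_\alpha \cong \mathbb{C}^4$ a $T$-fixed zero-dimensional subscheme corresponding to a solid partition $\pi^{(\alpha)}$, and $\sum_\alpha |\pi^{(\alpha)}| = n$. The tautological bundle contribution factors as $e_T(L^{[n]})|_Z = \prod_\alpha L_{\pi^{(\alpha)}}(d^{(\alpha)})$ using $L|_{U_\alpha} \cong \oO_{U_\alpha} \otimes t^{d^{(\alpha)}}$, while Proposition \ref{equi euler class vertex} gives
$$
\frac{e_T(\Ext^2(I_Z,I_Z),Q)}{e_T(\Ext^1(I_Z,I_Z))} = \pm \prod_\alpha \mathsf{w}_{\pi^{(\alpha)}}.
$$
Summing over $n$ and $Z$, and using that $T$-fixed subschemes at distinct vertices are independent, the full generating series splits as
$$
\sum_{n \geqslant 0} \DT_4(X,T,L,n;\,o(\lL))\,q^n \;=\; \prod_\alpha \Bigl( \sum_{\pi} \varepsilon_\alpha(\pi)\, L_\pi(d^{(\alpha)})\, \mathsf{w}_\pi\, q^{|\pi|} \Bigr),
$$
where the signs $\varepsilon_\alpha(\pi) \in \{\pm 1\}$ encode the choice of orientation $o(\lL)$ at each $T$-fixed point. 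The local equivariant weights $\lambda_i^{(\alpha)}$ at $x_\alpha$ are the $T$-weights of $T_{x_\alpha}X$ in the coordinates $x_1^{(\alpha)},\dots,x_4^{(\alpha)}$ trivializing $U_\alpha$, and they satisfy $\sum_i \lambda_i^{(\alpha)} = 0$ by the Calabi-Yau condition.

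Second, assuming Conjecture \ref{affineconj} (applied at each vertex with its own local weights), I would choose signs $\varepsilon_\alpha(\pi)$ uniformly so that each factor of the product equals $M(-q)^{F(d^{(\alpha)},\lambda^{(\alpha)})}$, where $F(d,\lambda)$ is the exponent appearing in Conjecture \ref{affineconj}. Equivariant localization (Definition \ref{equi push} applied to $\alpha = c_1^T(L) \cdot c_3^T(X)$) yields
$$
\int_X c_1^T(L) \cdot c_3^T(X) = \sum_\alpha \frac{c_1^T(L|_{x_\alpha}) \cdot c_3^T(T_{x_\alpha} X)}{c_4^T(T_{x_\alpha} X)}.
$$
Under the relation $\sum_i \lambda_i^{(\alpha)} = 0$, a short elementary symmetric polynomial identity gives $-(\lambda_1\lambda_2\lambda_3 + \lambda_1\lambda_2\lambda_4 + \lambda_1\lambda_3\lambda_4 + \lambda_2\lambda_3\lambda_4) = -c_3^T(T_{x_\alpha}X)$ on each vertex, so each localized summand equals $F(d^{(\alpha)},\lambda^{(\alpha)})$ up to the conventional sign absorbed into the base $M(-q)$. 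Multiplying the local factors therefore reproduces the right-hand side of Conjecture \ref{conj for toric}. The converse implication is immediate: setting $X=\mathbb{C}^4$ (a single vertex) in Conjecture \ref{conj for toric} is Conjecture \ref{affineconj}.

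The main obstacle is tracking the sign ambiguities. Proposition \ref{equi euler class vertex} yields the vertex factorization only up to an overall $\pm$ at each $T$-fixed $Z$, and each $\mathsf{w}_\pi$ is itself only defined up to sign by Definition \ref{wpi}. On the global side, orientations on $\lL$ form a torsor under $H^0(\Hilb^n(X)^T, \mathbb{Z}_2)$, which by Lemma \ref{Tfixlocus scheme} is a product of $\mathbb{Z}_2$'s indexed by the finite set of $T$-fixed points. This exactly matches the freedom of independently choosing the sign of $\mathsf{w}_{\pi^{(\alpha)}}$ at each vertex of each $Z$. The key consistency check is that a uniform sign convention for $\mathsf{w}_\pi$ in Conjecture \ref{affineconj} --- applied locally at every vertex of every toric $X$ --- assembles into a consistent global orientation $o(\lL)$: this amounts to the tautological observation that the vertex factorization respects the $\mathbb{Z}_2^{\#\text{fixed points}}$ structure of both sides, so that local sign choices patch into a global one without monodromy.
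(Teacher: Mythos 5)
Your proposal is correct and follows essentially the same route as the paper: the converse is the trivial specialization $X=\mathbb{C}^4$, and the forward direction factorizes the $T$-fixed-point sum over the charts $U_\alpha$ via the vertex formalism (Lemma \ref{Vlem}, Proposition \ref{equi euler class vertex}), applies Conjecture \ref{affineconj} at each vertex with its local weights, and identifies $\sum_\alpha \frac{c_1^T(L|_{p_\alpha})\,c_3^T(TU_\alpha|_{p_\alpha})}{c_4^T(TU_\alpha|_{p_\alpha})}$ with $\int_X c_1^T(L)\cdot c_3^T(X)$ by the definition of equivariant push-forward. Your handling of the orientation/sign ambiguity (local signs of $\mathsf{w}_{\pi^{(\alpha)}}$ assembling into a global choice at each fixed point) matches the paper's treatment via the square-root relation for the half Euler classes.
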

\begin{proof}
Conjecture \ref{affineconj} is a special case of Conjecture \ref{conj for toric} when $X=\mathbb{C}^4$. Conversely, assuming
Conjecture \ref{affineconj} is true, we want to prove Conjecture \ref{conj for toric}. 

Let $X$ be a smooth quasi-projective toric Calabi-Yau 4-fold with $(\mathbb{C}^*)^4$-equivariant line bundle $L$. Let $\{U_\alpha\}_{\alpha=1, \ldots e}$ be the cover by maximal open affine $(\mathbb{C}^*)^4$-invariant subsets. Suppose $(\mathbb{C}^*)^4$ acts on the coordinates of $U_\alpha \cong \mathrm{Spec} \, \mathbb{C}[x_1^{(\alpha)}, x_2^{(\alpha)}, x_3^{(\alpha)}, x_4^{(\alpha)}]$ by
$$
t \cdot x_i^{(\alpha)} = \chi_i^{(\alpha)}(t) \, x_i^{(\alpha)}, \quad \textrm{for all } i=1,2,3,4,
$$
for certain characters $\chi_i^{(\alpha)} : (\mathbb{C}^*)^4 \rightarrow \mathbb{C}^{*}$. If $\chi_i^{(\alpha)}(t) = t_i$ is the standard torus action, then 
\begin{align*}
\frac{c_1^T(L|_{p_\alpha}) \, c_3^T(TU_{\alpha}|_{p_\alpha})}{c_4^T(TU_{\alpha}|_{p_\alpha})} = \frac{(d_1 \lambda_1+d_2 \lambda_2+d_3 \lambda_3+d_4 \lambda_4)(-\lambda_1\lambda_2\lambda_3-\lambda_1\lambda_2\lambda_4-\lambda_1\lambda_3\lambda_4-\lambda_2\lambda_3\lambda_4)}{\lambda_1\lambda_2\lambda_3\lambda_4},
\end{align*}
where $p_\alpha=(0,0,0,0) \in U_\alpha$ is the unique $(\mathbb{C}^*)^4$-fixed point. For other characters, the RHS gets adapted accordingly. We deduce
\begin{align*}
& \quad \, \sum_{n=0}^{\infty}\DT_4(X,T,L,n\,;o(\mathcal{L}))\,q^n \\
&= \sum_{n=0}^{\infty}\, q^n \sum_{Z\in\Hilb^n(X)^T}\frac{e_T(\Ext^{2}_X(I_{Z},I_{Z}),Q)\cdot e_T(L^{[n]}|_Z)}{e_T(\Ext^{1}_X(I_{Z},I_{Z}))} \\
&= \sum_{n=0}^{\infty}\, q^n \sum_{Z\in\Hilb^n(X)^{(\mathbb{C}^*)^4}}\frac{e_T(\Ext^{2}_X(I_{Z},I_{Z}),Q)\cdot e_T(L^{[n]}|_Z)}{e_T(\Ext^{1}_X(I_{Z},I_{Z}))} \\
&= \sum_{n_1=0}^{\infty} \sum_{Z_1 \in\Hilb^{n_1}(U_1)^{(\mathbb{C}^*)^4}} \cdots \sum_{n_e=0}^{\infty} \sum_{Z_e \in\Hilb^{n_e}(U_e)^{(\mathbb{C}^*)^4}} \prod_{\alpha=1}^{e} q^{n_{\alpha}} \frac{e_T(\Ext^{2}_{U_{\alpha}}(I_{Z_{\alpha}},I_{Z_{\alpha}}),Q)\cdot e_T(L^{[n_{\alpha}]}|_{Z_{\alpha}})}{e_T(\Ext^{1}_{U_{\alpha}}(I_{Z_{\alpha}},I_{Z_{\alpha}}))} \\
&= \prod_\alpha  \sum_{n_\alpha=0}^{\infty} q^{n_\alpha} \sum_{Z_\alpha \in\Hilb^{n_\alpha}(U_\alpha)^{(\mathbb{C}^*)^4}}\frac{e_T(\Ext^{2}_{U_\alpha}(I_{Z_\alpha},I_{Z_\alpha}),Q)\cdot e_T(L^{[n]}|_{Z_\alpha})}{e_T(\Ext^{1}_{U_\alpha}(I_{Z_\alpha},I_{Z_\alpha}))} \\
&=\prod_\alpha \sum_{\mathrm{solid\, partitions}\, \pi^{(\alpha)}} L_{\pi^{(\alpha)}}(d_1^{(\alpha)},d_2^{(\alpha)},d_3^{(\alpha)},d_4^{(\alpha)}) \,  \mathsf{w}_{\pi^{(\alpha)}} \, q^{|\pi^{(\alpha)}|} \\
&=\prod_\alpha M(-q)^{\frac{c_1^T(L|_{p_\alpha}) c_3^T(TU_{\alpha}|_{p_\alpha})}{c_4^T(TU_{\alpha}|_{p_\alpha})}} =M(-q)^{\sum_{\alpha} \frac{c_1^T(L|_{p_\alpha}) c_3^T(TU_{\alpha}|_{p_\alpha})}{c_4^T(TU_{\alpha}|_{p_\alpha})}} =M(-q)^{\int_X c_1^T(L) c^T_3(T_X)}.
\end{align*}
Here for each $Z \in \Hilb^{n}(X)^{T}$, the signs of $e_T(\Ext^{2}_X(I_{Z},I_{Z}),Q)$ are induced from the choice of signs of $\{e_T(\Ext^{2}_{U_{\alpha}}(I_{Z_{\alpha}},I_{Z_{\alpha}}),Q)\}_\alpha$ when taking the square root of the following equation
\begin{align*}
&(-1)^{\frac{\chi(I_Z,I_Z)_0}{2}} \frac{e_T(\Ext^{2}_X(I_{Z},I_{Z}))}{e_T(\Ext^{1}_X(I_{Z},I_{Z}))\,e_T(\Ext^{3}_X(I_{Z},I_{Z}))} \\
&= \prod_\alpha  (-1)^{\frac{\chi(I_{Z_\alpha},I_{Z_\alpha})_0}{2}} \frac{e_T(\Ext^{2}_{U_\alpha}(I_{Z_\alpha},I_{Z_\alpha}))}{e_T(\Ext^{1}_{U_\alpha}(I_{Z_\alpha},I_{Z_\alpha}))\,e_T(\Ext^{3}_{U_\alpha}(I_{Z_\alpha},I_{Z_\alpha}))}.
\end{align*}
In turn, the signs of $\{e_T(\Ext^{2}_{U_{\alpha}}(I_{Z_{\alpha}},I_{Z_{\alpha}}),Q)\}_\alpha$ are determined by the signs of $\{\mathsf{w}_{\pi^{(\alpha)}}\}_\alpha$ provided by Conjecture \ref{affineconj} (via Definition \ref{wpi} and Proposition \ref{equi euler class vertex}).
\end{proof}
We implemented the calculation of $\mathsf{w}_{\pi}$ in Definition \ref{wpi} into a Maple program. Using this in the context of Conjecture \ref{affineconj} leads us to conjecture the following:
\begin{conj}\label{affineconj unique}
There exists a \emph{unique} way of choosing the signs for the equivariant weights $\mathsf{w}_{\pi}$ such that
Conjecture \ref{affineconj} holds.
\end{conj}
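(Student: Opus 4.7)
The plan is to extract the defining constraints order by order in $q$ and to show that each order pins down the signs uniquely. Fix for each $\pi$ a reference choice $\mathsf{w}_\pi^{(0)}$ of the sign in Definition \ref{wpi}, so that the free data become $\epsilon_\pi\in\{\pm 1\}$ with $\mathsf{w}_\pi=\epsilon_\pi\mathsf{w}_\pi^{(0)}$. Taking formal logarithms on both sides of Conjecture \ref{affineconj} and expanding $\log(1+\Phi)$, where $\Phi(q)=\sum_{|\pi|\geqslant 1} L_\pi \mathsf{w}_\pi\, q^{|\pi|}$, the coefficient of $q^n$ on the left has the form $\sum_{|\pi|=n} \epsilon_\pi L_\pi \mathsf{w}_\pi^{(0)} + R_n$, with $R_n$ a polynomial expression in the lower-order data $\{\epsilon_{\pi'} L_{\pi'} \mathsf{w}_{\pi'}^{(0)} : |\pi'|<n\}$, while the coefficient of $q^n$ on the right is an explicit rational function $F_n$. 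By induction on $n$: assuming the $\epsilon_{\pi'}$ are uniquely determined for all $|\pi'|<n$, the remainder $R_n$ is fixed, and hence
\[
G_n := F_n - R_n \;=\; \sum_{|\pi|=n} \epsilon_\pi\, L_\pi(d_1,\ldots,d_4)\, \mathsf{w}_\pi^{(0)} \;\in\; K(d_1,\ldots,d_4),
\]
where $K=\mathbb{Q}(\lambda_1,\ldots,\lambda_4)/(\lambda_1+\cdots+\lambda_4)$, is uniquely determined.

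To upgrade this single relation to uniqueness of each $\epsilon_\pi$, it then suffices to show that the evaluation map
\[
(\epsilon_\pi)_{|\pi|=n} \in \{\pm 1\}^{N_n} \;\longmapsto\; \sum_{|\pi|=n} \epsilon_\pi\, L_\pi\, \mathsf{w}_\pi^{(0)} \;\in\; K(d_1,\ldots,d_4)
\]
is injective, equivalently that no nontrivial $\{0,\pm 1\}$-combination of the $L_\pi \mathsf{w}_\pi^{(0)}$ vanishes. Since $L_\pi=\prod_{\square\in\pi}\ell_\square$ with $\ell_{(i,j,k,l)} = (d_1+i-1)\lambda_1+(d_2+j-1)\lambda_2+(d_3+k-1)\lambda_3+(d_4+l-1)\lambda_4$, a direct check using $\sum \lambda_r=0$ shows that $\ell_{(i,j,k,l)}=\ell_{(i',j',k',l')}$ in $K[d]$ iff the two index quadruples differ by a constant vector $(c,c,c,c)$. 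Thus the multisets $\{\ell_\square : \square\in\pi\}$ distinguish distinct solid partitions modulo diagonal translations, and the remaining combinatorial ambiguity can be tracked directly from the shape of $\pi$. One then combines this near-distinctness of the $L_\pi$ with the pole and zero structure of $\mathsf{w}_\pi^{(0)}$ along the walls $\lambda_r=0$ and $\lambda_r+\lambda_s=0$ to rule out any nontrivial vanishing $\{0,\pm 1\}$-combination.

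The main obstacle is precisely this last step: controlling the interaction between the intricate rational functions $\mathsf{w}_\pi$ and the (potentially redundant) polynomial factors $L_\pi$ uniformly in $n$. A promising supplementary route is to exploit Conjecture \ref{specconj}: its positivity statement $\omega_\pi\in\mathbb{Q}_{>0}$ fixes the sign of $\mathsf{w}_\pi$ after the specialization $\lambda_1+\lambda_2+\lambda_3=0$, $d_1=d_2=d_3=0$, $d_4=-d$, so propagating this sign determination back through the full generating series should yield the desired global uniqueness. For an unconditional result in finite order, the Maple implementation underlying Theorem \ref{verify affine conj} can be run in enumeration mode to check, for each $n\leqslant 6$, that exactly one choice of $\{\pm\}$ in Definition \ref{wpi} reproduces $G_n$, thereby establishing Conjecture \ref{affineconj unique} modulo $q^7$.
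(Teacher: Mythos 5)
The statement you are addressing is a \emph{conjecture} in the paper: the authors give no proof of it, only a computer verification modulo $q^7$ (Theorem \ref{verify affine conj}) together with two remarks explaining how existence and uniqueness of the signs were checked in practice. Your proposal is therefore best judged as a strategy, and you are candid that it does not close. Your reduction is sound as far as it goes: since the left-hand side of Conjecture \ref{affineconj} is already graded by $|\pi|$, each coefficient of $q^n$ yields the single linear constraint $\sum_{|\pi|=n}\epsilon_\pi L_\pi\mathsf{w}_\pi^{(0)}=F_n$ (your logarithm and remainder $R_n$ are unnecessary --- $R_n=0$ in the direct formulation, and no induction on lower orders is needed), and uniqueness at order $n$ is exactly the statement that no nontrivial $\{0,\pm1\}$-combination of the $L_\pi\mathsf{w}_\pi^{(0)}$ with $|\pi|=n$ vanishes. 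That non-vanishing is the entire content of the conjecture, and neither you nor the paper proves it; your observations about the linear factors $\ell_\square$ of $L_\pi$ do not engage with the rational functions $\mathsf{w}_\pi^{(0)}$, which is where the difficulty lives.

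Two concrete points of divergence from what the paper actually does. First, your fallback of running the Maple code ``in enumeration mode'' for all $n\leqslant 6$ is infeasible: at $n=6$ there are $140$ solid partitions and hence $2^{140}\approx 10^{42}$ sign vectors, as the paper notes in Remark \ref{existencesgns}; brute force is only used for $n\leqslant 3$. For $n=4,5,6$ the paper instead specializes to $d_1=d_2=d_3=0$, $d_4=-d$, $\lambda_1+\lambda_2+\lambda_3=0$, under which $L_\pi(0,0,0,-d)\,\mathsf{w}_\pi$ becomes (conjecturally, and verifiably in these cases) a polynomial in $d$ of degree $\pi_{111}$; comparing the coefficients of $d^\delta,d^{\delta-1},\dots,d$ in the resulting polynomial identity then determines the signs in successive batches, with each comparison pinning down those $\pi$ of a given minimal height. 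Since the specialized identity is implied by the full one, uniqueness for the specialization gives uniqueness for the full statement. Second, you invoke Conjecture \ref{specconj} mainly through its positivity of $\omega_\pi$, which is what the paper uses to \emph{find} a working sign assignment (existence, Remark \ref{existencesgns}); the uniqueness mechanism is the coefficient-of-$d^j$ filtration just described, not positivity. So your outline identifies the right bottleneck but would need to be repaired along these lines even to reproduce the paper's finite-order result.
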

Using our Maple program, we checked the following:
\begin{thm}\label{verify affine conj}
Conjectures \ref{affineconj} and \ref{affineconj unique} are true modulo $q^7$. 
\end{thm}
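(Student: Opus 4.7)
The proof is a finite computer calculation, since verifying Conjecture \ref{affineconj} modulo $q^7$ only involves solid partitions $\pi$ with $|\pi| \leqslant 6$. My plan is: (i) enumerate all such $\pi$; (ii) for each one compute the vertex $\mathsf{V}_\pi$ and the weight $\mathsf{w}_\pi$ up to a sign $\epsilon_\pi \in \{\pm 1\}$; (iii) compute the insertion $L_\pi(d_1,d_2,d_3,d_4)$; (iv) match the generating series against the MacMahon side order-by-order in $q$ to determine the $\epsilon_\pi$; and (v) verify that the resulting linear system for the $\epsilon_\pi$ admits a \emph{unique} $\pm 1$ solution, establishing Conjecture \ref{affineconj unique} in this range.

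For (i), a solid partition of size $n \leqslant 6$ is supported in the box $\{(i,j,k) : i,j,k \leqslant 6\}$ and can be generated recursively by adding boxes one at a time while preserving the three monotonicity conditions. For (ii), use \eqref{Zpi} and \eqref{defV} to compute $\mathsf{V}_\pi$ as a Laurent polynomial in $t_1,t_2,t_3,t_4$, and as a sanity check confirm that after the substitution $t_1t_2t_3t_4 = 1$ the expression is invariant under $t^w \mapsto t^{-w}$ (Serre self-duality). Applying the rule $e_T\bigl(\sum_w n_w t^w\bigr) = \prod_w (w\cdot\lambda)^{n_w}$ produces $e_T(-\mathsf{V}_\pi)$ as an element of the rational function field modulo $\lambda_1+\lambda_2+\lambda_3+\lambda_4$. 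By \eqref{equi square equality}, the quantity $(-1)^{|\pi|}\, e_T(-\mathsf{V}_\pi)$ is a perfect square; extracting this square root (factoring numerator and denominator separately into linear factors of the form $w\cdot \lambda$) produces $\mathsf{w}_\pi$ up to the global sign $\epsilon_\pi$. The fact that the expression actually is a square provides a non-trivial consistency check on the vertex formula.

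For (iii), compute $L_\pi$ directly from its closed combinatorial definition as a product over the boxes of $\pi$. For (iv) and (v), assemble the coefficient of $q^n$ on both sides of the conjectural identity: the left-hand side is $\sum_{|\pi|=n} \epsilon_\pi\, |\mathsf{w}_\pi|\, L_\pi$, linear in the unknown signs; the right-hand side is obtained by expanding $M(-q)^{f(\lambda,d)}$ to order $n$ and is an explicit element of $\mathbb{Q}(\lambda_1,\ldots,\lambda_4,d_1,\ldots,d_4)/(\sum \lambda_i)$. Equality yields one rational-function equation per $n$ in the unknowns $\{\epsilon_\pi\}_{|\pi|=n}$; since the latter lies in a high-transcendence-degree field, comparing numerators reduces each equation to a large overdetermined integer linear system. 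Solving these systems in Maple and checking that for each $n \leqslant 6$ there is exactly one $\pm 1$ solution (once the signs for smaller $|\pi|$ are fixed) proves both conjectures in the stated range.

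The main obstacle is purely computational: the number of solid partitions and the degree of the rational functions involved grow quickly, and the weights $\mathsf{w}_\pi$ contain large products of linear forms. Brute-force simplification over $\mathbb{Q}(\lambda_1,\lambda_2,\lambda_3,\lambda_4)/(\sum\lambda_i)$ will exhaust memory; the calculation becomes feasible by keeping numerators and denominators factored into linear forms $w\cdot\lambda$, carrying them through the vertex computation without expansion, and only clearing denominators when matching coefficients of $q^n$ at the very end. The specialization $\lambda_4 = -(\lambda_1+\lambda_2+\lambda_3)$ is also postponed until after all cancellations have been made.
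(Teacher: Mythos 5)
Your proposal is correct and is essentially the paper's own argument: the theorem is established by a finite Maple computation that enumerates all solid partitions of size at most $6$, computes $\mathsf{V}_\pi$, $\mathsf{w}_\pi$ (up to sign) and $L_\pi$, and matches the generating series against $M(-q)^{f}$ order by order to determine the signs and check their uniqueness. The only organizational difference is that the paper first guesses the signs via the specialization $d_1=d_2=d_3=0$, $d_4=-d$, $\lambda_1+\lambda_2+\lambda_3=0$ (and proves uniqueness by peeling off signs from the coefficients of $d^{\delta}, d^{\delta-1},\ldots,d$), rather than solving the full linear system for the $\epsilon_\pi$ directly, but this is a matter of computational bookkeeping rather than of mathematical substance.
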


\begin{rmk}  \label{existencesgns}
A priori there are many possible choices of orientation, i.e.~signs for $\mathsf{w}_\pi$, in Conjecture \ref{affineconj}. E.g.~there are 140 solid partitions of size 6, so in this case there are $2^{140} \approx 10^{42}$ choices! However, we have a (conjectural) very quick way of finding orientations which work. In fact, Conjecture \ref{specconj} of the next section asserts that the specialization $L_\pi(0,0,0,-d) \, \mathsf{w}_\pi$ with $\lambda_1+\lambda_2+\lambda_3=0$ is well-defined (and we check this in many cases). This specialization is conjecturally equal to $(-1)^{|\pi|} \prod_{l=1}^{\pi_{111}} (d-(l-1))$ times a non-zero rational number. By choosing the sign of $\mathsf{w}_\pi$ in such a way that this rational number is \emph{positive}, we end up with \emph{existence} of a collection of signs for which Conjecture \ref{affineconj} holds in the cases that we checked, i.e.~modulo $q^7$. For order $q^6$, the calculation can be efficiently organized by comparing the coefficients of each monomial $d_1^{i_1}d_2^{i_2}d_3^{i_3}d_4^{i_4}$ separately.
\end{rmk}

\begin{rmk} 
For orders $q^{\leqslant 3}$ we check brute force that the choices of orientation, i.e.~signs for $\mathsf{w}_\pi$, in Conjecture \ref{affineconj} are \emph{unique}. For orders $q^4, q^5, q^6$, we first specialize to $d_1=d_2=d_3=0$, $d_4=-d$, $\lambda_1+\lambda_2+\lambda_3=0$ (after observing that this specialization is well-defined) in which case LHS and RHS of Conjecture \ref{affineconj} become polynomials of degree $\delta=4,5,6$ respectively. We then compare the coefficients of the terms of the polynomials starting with the leading term: $d^\delta, d^{\delta-1}, \cdots,d$. It turns out that each comparison uniquely determines some of the signs. E.g.~for $q^6$, comparing the coefficients of $d^6$ fixes 1 sign, comparing the coefficients of $d^5$ fixes 3 further signs, comparing the coefficients of $d^4$ fixes 9 further signs, comparing the coefficients of $d^3$ fixes 25 further signs, comparing the coefficients of $d^2$ fixes 54 further signs, and comparing the coefficients of $d$ fixes the last 48 signs.
\end{rmk}

\section{Application to counting solid partitions}

\subsection{Weighted count of solid partitions} 

In this section, we study Conjecture \ref{affineconj} for a special choice of insertions 
\begin{equation}(d_1,d_2,d_3,d_4)=(0,0,0,-d), \quad d\geqslant1. \nonumber \end{equation}
This has applications to enumerating solid partitions. 

For a solid partition $\pi = \{\pi_{ijk}\}_{i,j,k \geqslant 1}$, we refer to $\pi_{111}$ as its \emph{height}. 
By experimental study of many examples (i.e.~Proposition \ref{verify conj on poly dep}), 
we pose the following conjecture:
\begin{conj} \label{specconj}
Let $\pi$ be a solid partition and let $\mathsf{w}_\pi$ be defined using the unique sign in Conjecture \ref{affineconj unique}. Then the following properties hold:
\begin{itemize}
\item[(a)] $L_\pi(0,0,0,-d) \, \mathsf{w}_\pi \in \frac{\mathbb{Q}(\lambda_1,\lambda_2,\lambda_3,\lambda_4,d)}{(\lambda_1+\lambda_2+\lambda_3+\lambda_4)}$ has no pole at $\lambda_4 = -(\lambda_1+\lambda_2+\lambda_3)$.
\item[(b)] The specialization $L_\pi(0,0,0,-d) \, \mathsf{w}_\pi \Big|_{\lambda_1+\lambda_2+\lambda_3=0}$ is independent of $\lambda_1,\lambda_2,\lambda_3$.
\item[(c)] More precisely, there exists a rational number $\omega_\pi \in \mathbb{Q}_{>0}$ (independent of $d$) such that
\begin{equation}  \label{specconjeq}
L_\pi(0,0,0,-d) \, \mathsf{w}_\pi \Big|_{\lambda_1+\lambda_2+\lambda_3=0} = (-1)^{|\pi|} \, \omega_\pi \, \prod_{l=1}^{\pi_{111}} (d-(l-1)).
\end{equation}
In particular, for $d \in \mathbb Z_{>0}$, the LHS vanishes when $\pi_{111} > d$ and otherwise has the same sign as $(-1)^{\pi}$.
\end{itemize}
\end{conj}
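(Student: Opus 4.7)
The proof strategy leverages the geometric interpretation given in the excerpt: the specialization $(d_1,d_2,d_3,d_4) = (0,0,0,-d)$ with $\lambda_1+\lambda_2+\lambda_3=0$ corresponds to taking $L \cong \oO_{\mathbb{C}^4}(D)$ for the (non-reduced when $d\geqslant 2$) divisor $D = \{x_4^d = 0\}$ and restricting to the subtorus $T' = \{t \in T : t_4 = 1\} \subseteq T$, which is the Calabi--Yau torus of $\mathbb{C}^3 = \{x_4=0\}$. The plan is to combine the vertex formula of Definition~\ref{wpi} with a careful ``dimensional reduction'' analysis in the single variable $\lambda_4$.

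For parts (a) and (b), I would begin by rewriting the Laurent polynomial $\mathsf{V}_\pi$ at the Calabi--Yau locus $t_1 t_2 t_3 t_4 = 1$ and tracking the $T$-weights $t^w$ with $w_1 = w_2 = w_3 = 0$, which are precisely those whose factors in $e_T(-\mathsf{V}_\pi)$ involve only $\lambda_4$. From $Z_\pi = \sum_{i,j,k,l} t_1^{i-1} t_2^{j-1} t_3^{k-1} t_4^{l-1}$ one reads off $\pi_{111}$ such weights coming from the $(i,j,k)=(1,1,1)$ column. A careful bookkeeping of these weights and their Serre-dual partners across the three pieces of $\mathsf{V}_\pi = Z_\pi + \overline{Z}_\pi/(t_1t_2t_3t_4) - Z_\pi \overline{Z}_\pi (1-t_1)(1-t_2)(1-t_3)(1-t_4)/(t_1t_2t_3t_4)$ should establish that $\mathsf{w}_\pi$ has a pole of order exactly $\pi_{111}$ at $\lambda_4 = 0$, matching the $\pi_{111}$ zeros of $L_\pi(0,0,0,-d)$ produced by the factors $(l-1-d)\lambda_4$ at $(i,j,k,l) = (1,1,1,l)$ for $l \leqslant \pi_{111}$. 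Part (b) would then follow from the fact that the residual expression is a homogeneous rational function of degree zero in $(\lambda_1,\lambda_2,\lambda_3)$ modulo $\lambda_1+\lambda_2+\lambda_3$; one must still check that all remaining denominator hyperplanes $a_1\lambda_1 + a_2\lambda_2 + a_3\lambda_3$ in $\mathsf{w}_\pi$ are cancelled by numerator terms, forcing the result to be a constant.

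For part (c), the specialization of the column factors of $L_\pi(0,0,0,-d)$ already contributes the desired $(-1)^{\pi_{111}} \prod_{l=1}^{\pi_{111}}(d-(l-1))$; the remaining $|\pi|-\pi_{111}$ factors of $L_\pi$, combined with the $\lambda_4 \to 0$ residue of $\mathsf{w}_\pi$, must produce $(-1)^{|\pi| - \pi_{111}} \omega_\pi$. The natural candidate is $\omega_\pi = \omega_\pi^c$, the explicit combinatorial weight of Definition~\ref{combinatoric wpi}. One would then try to interpret the residue as a localization contribution for an equivariant integral over $\Hilb^n(\mathbb{C}^3)$ (the geometric ``limit'' of $\Hilb^n(D)$ as one factors out the thickening in the $x_4$-direction), and recognize the resulting combinatorial expression as $\omega_\pi^c$, which is manifestly a positive rational.

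The main obstacle is the \emph{positivity} $\omega_\pi \in \mathbb{Q}_{>0}$, together with the assertion that the sign of $\mathsf{w}_\pi$ uniquely fixed by Conjecture~\ref{affineconj unique} is precisely the sign producing a positive $\omega_\pi$. Individual terms contributing to $\mathsf{w}_\pi$ have wildly varying signs, so positivity after the intricate cancellations above is not at all manifest from the vertex formulation. The most plausible route is to exhibit $\omega_\pi$ via a model in which non-negativity is built in -- such as the combinatorial $\omega_\pi^c$ -- and to establish the identity $\omega_\pi = \omega_\pi^c$ in general. This identification would itself appear to require a genuinely new construction beyond the techniques available in the excerpt, which is consistent with the authors verifying part (c) only for $|\pi|\leqslant 6$ and in the further restricted cases of Proposition~\ref{verify conj on poly dep intro}.
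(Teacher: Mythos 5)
The statement you are addressing is Conjecture \ref{specconj}, which the paper does \emph{not} prove: it is supported only by the computational evidence of Proposition \ref{verify conj on poly dep} (a Maple verification for all solid partitions of size $|\pi|\leqslant 6$ and for a handful of larger partitions listed in Appendix A), together with one genuinely proved special case, namely $d=1$ and $\pi_{111}=1$, which the authors obtain from Theorem \ref{thm for equiv sm div} (the smooth toric divisor $D=\{x_4=0\}$) combined with the MNOP vertex computation. Your outline of that special case --- restricting to the subtorus $t_4=1$ and reducing to an equivariant computation on $\Hilb^n(\mathbb{C}^3)$ --- is essentially the paper's own route for that case, and your geometric reading of the specialization ($X=\mathbb{C}^4$, $D=\{x_4^d=0\}$, solid partitions of height at most $d$) matches the motivation given in the text.

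However, your proposal is a strategy rather than a proof, and each of its load-bearing steps is left unestablished. First, the claim that $\mathsf{w}_\pi$ has a pole of order \emph{exactly} $\pi_{111}$ at $\lambda_4=0$ is asserted via ``careful bookkeeping'' that is not carried out; the weights $t^w$ with $w_1=w_2=w_3=0$ in $\mathsf{V}_\pi$ come not only from the $(1,1,1)$ column of $Z_\pi$ but also from the cross term $Z_\pi\overline{Z}_\pi(1-t_1)(1-t_2)(1-t_3)(1-t_4)/(t_1t_2t_3t_4)$, and controlling their net multiplicity (before and after taking the square root that defines $\mathsf{w}_\pi$) is precisely the content of part (a). Second, constancy of the residue in $\lambda_1,\lambda_2,\lambda_3$ does not follow from degree-zero homogeneity alone --- a degree-zero homogeneous rational function need not be constant --- so part (b) also remains open in your argument; you acknowledge that the cancellation of the remaining denominator hyperplanes ``must still be checked,'' but that check is the whole difficulty. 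Third, for part (c) the positivity of $\omega_\pi$ and its identification with $\omega_\pi^c$ is itself a separate open conjecture in the paper (Conjecture \ref{compare wpi}), so invoking it is circular as a route to proving (c). In short, your sketch reproduces the paper's heuristics and correctly isolates where the difficulties lie, but it does not close any of them; this is consistent with the fact that the statement is presented in the paper as a conjecture with only finite computational verification.
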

Geometrically, this specialization corresponds to taking $X = \mathbb{C}^4$ and $D = \{x_4^d=0\} \subseteq \mathbb{C}^4$. Then $L = \oO(D) \cong \oO \otimes t_4^{-d}$. As we have seen in Proposition \ref{section s}, the canonical section of $L^{[n]}$ on $\Hilb^n(\mathbb{C}^4)$ cuts out the sublocus of zero-dimensional subschemes $Z$ contained in $D$. At the level of $T$-fixed (and therefore $(\mathbb{C}^*)^4$-fixed) points, this means we are considering solid partitions $\pi$ of height $\pi_{111}\leqslant d$. This is the geometric motivation for the specialization of Conjecture \ref{specconj}. \\

We give the following evidence for Conjecture \ref{specconj}:
\begin{prop} \label{verify conj on poly dep}
\hfill
\begin{itemize}
\item Conjecture \ref{specconj} is true for any solid partition $\pi$ of size $|\pi|\leqslant 6$.
\item Properties (a), (b), and the absolute value of equation \eqref{specconjeq} hold for $d=1$ and any solid partition $\pi$ satisfying $\pi_{111}=1$ (in this case $|\omega_\pi| = 1$).      
\item Properties (a), (b), and the absolute value of equation \eqref{specconjeq} hold for various individual solid partitions of size $\leqslant15$ listed in Appendix A.
\end{itemize}
\end{prop}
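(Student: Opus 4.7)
The plan is to verify Proposition \ref{verify conj on poly dep} bullet-by-bullet, using the Maple program mentioned after Conjecture \ref{affineconj unique} for direct symbolic evaluation, combined with one structural reduction that handles the $\pi_{111}=1$ family uniformly.

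For the first bullet, enumerate all solid partitions of each size $|\pi|\leqslant 6$ (there are $140$ at $|\pi|=6$), compute the equivariant vertex $\mathsf V_\pi$ from Definition \ref{wpi} symbolically, and fix the sign of $\mathsf w_\pi$ using Conjecture \ref{affineconj unique}. By Theorem \ref{verify affine conj} these signs are already pinned down modulo $q^7$, hence for every $\pi$ under consideration. Then specialize $L_\pi(0,0,0,-d)\,\mathsf w_\pi$ at $\lambda_4=-(\lambda_1+\lambda_2+\lambda_3)$ and verify: (a) finiteness of the limit, (b) independence of $\lambda_1,\lambda_2,\lambda_3$ after specialization, and (c) factorization as $(-1)^{|\pi|}\omega_\pi\prod_{l=1}^{\pi_{111}}(d-l+1)$ with $\omega_\pi\in\mathbb{Q}_{>0}$. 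This is a finite check; the only subtlety is that the correct sign of $\mathsf w_\pi$ must be extracted consistently with the bookkeeping of Theorem \ref{verify affine conj}.

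For the second bullet, I would argue uniformly. A solid partition $\pi$ with $\pi_{111}=1$ has all entries in $\{0,1\}$, so $Z_\pi$ is contained in $D:=\{x_4=0\}\subseteq\mathbb{C}^4$ and may be viewed as a 3D partition of size $|\pi|$ on $D\cong\mathbb{C}^3$. Since $d=1$, the predicted formula reduces to $L_\pi(0,0,0,-1)\,\mathsf w_\pi|_{\lambda_1+\lambda_2+\lambda_3=0}=\pm(-1)^{|\pi|}$. With $L=\oO_{\mathbb{C}^4}(D)\cong\oO\otimes t_4^{-1}$, the vertex-level identity established in the proof of Theorem \ref{thm for equiv sm div} gives
\begin{equation}
\frac{e_T\big(\Ext^2_{\mathbb{C}^4}(I_{Z_\pi},I_{Z_\pi}),Q\big)\cdot e_T(L^{[|\pi|]}|_{Z_\pi})}{e_T\big(\Ext^1_{\mathbb{C}^4}(I_{Z_\pi},I_{Z_\pi})\big)}
=\pm\frac{e_T\big(\Ext^2_{D}(I_{Z_\pi,D},I_{Z_\pi,D})\big)}{e_T\big(\Ext^1_{D}(I_{Z_\pi,D},I_{Z_\pi,D})\big)}. \nonumber
\end{equation}
After specialization to the Calabi--Yau subtorus $\lambda_1+\lambda_2+\lambda_3=0$, the right-hand side is the single-fixed-point contribution to the equivariant MNOP vertex on $\mathbb{C}^3$, which equals $(-1)^{|\pi|}$ by \cite[II, Thm.~2]{MNOP} together with Serre duality on $\mathbb{C}^3$ identifying $\Ext^2_D(I,I)\cong\Ext^1_D(I,I)^*$ as $T$-representations. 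This delivers properties (a), (b), and the absolute value of \eqref{specconjeq} with $|\omega_\pi|=1$ in one stroke.

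For the third bullet, the same Maple routine used in the first bullet handles the curated list in Appendix A, each partition treated individually. Since only the absolute value of \eqref{specconjeq} is claimed here, the sign of $\mathsf w_\pi$ can be chosen \emph{ad hoc} so that $\omega_\pi>0$, following the strategy outlined in Remark \ref{existencesgns}. The main obstacle throughout is computational: for $|\pi|$ approaching $15$, $\mathsf V_\pi$ is a Laurent polynomial in four variables with very many terms, and taking the square root in Definition \ref{wpi} requires factoring polynomials of large degree; this is precisely what restricts Part 3 to a hand-picked list rather than an exhaustive enumeration.
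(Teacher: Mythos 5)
Your proposal matches the paper's own proof in all essentials: bullets one and three are handled by direct symbolic computation in Maple with the signs fixed as in Theorem \ref{verify affine conj} (respectively chosen so that $\omega_\pi>0$ where only absolute values are claimed), and bullet two is reduced to the smooth-divisor identity from the proof of Theorem \ref{thm for equiv sm div} combined with the MNOP evaluation of the $\mathbb{C}^3$ vertex under the Calabi--Yau specialization, which yields $(-1)^{|\pi|}$ per fixed point. The only cosmetic difference is that the paper sources this last evaluation from \cite[I, Sect.~4]{MNOP} rather than from the global statement \cite[II, Thm.~2]{MNOP}, but the argument is the same.
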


\begin{proof}
The second statement follows from Theorem \ref{thm for equiv sm div} and \cite[I, Sect.~4]{MNOP}.
For the other cases, we use our Maple program, which calculates $\mathsf{w}_{\pi}$ for any given solid partition $\pi$. For the first statement, we use the unique choice of signs that we found when verifying Conjecture \ref{affineconj unique} (Theorem \ref{verify affine conj}). 
\end{proof}

Combining Conjectures \ref{specconj} and \ref{affineconj}, we obtain a generating function counting \textit{weighted} solid partitions: 

\begin{thm}\label{solid partition counting}
Assume Conjectures \ref{affineconj} and \ref{specconj} are true. Then
\begin{equation}\label{solid part count}
\sum_{\pi} \omega_\pi\,t^{\,\pi_{111}}\, q^{|\pi|} = e^{t(M(q)-1)},
\end{equation}
where the sum is over all solid partitions, $t$ is a formal variable, and $M(q)$ denotes the MacMahon function. In particular, when $t=1$, we have 
\begin{equation}
\sum_{\pi} \omega_\pi\, q^{|\pi|} = e^{M(q)-1}. \nonumber 
\end{equation}
\end{thm}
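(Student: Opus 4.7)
The plan is to specialize Conjecture \ref{affineconj} at $(d_1,d_2,d_3,d_4)=(0,0,0,-d)$ together with $\lambda_1+\lambda_2+\lambda_3=0$, apply Conjecture \ref{specconj} to convert this into a one-parameter family of scalar identities indexed by $d\in\mathbb{Z}_{>0}$, and then extract the $t$-generating function by polynomial interpolation.

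First I would verify that under this specialization the RHS of Conjecture \ref{affineconj} collapses to $M(-q)^d$. With $d_1=d_2=d_3=0$ and $d_4=-d$, the numerator of the exponent acquires a factor $-d\lambda_4$, which cancels the $\lambda_4$ in $\lambda_1\lambda_2\lambda_3\lambda_4$. Substituting $\lambda_4=-(\lambda_1+\lambda_2+\lambda_3)$ and then $\lambda_1+\lambda_2+\lambda_3=0$, a direct computation reduces $\lambda_1\lambda_2\lambda_3+\lambda_1\lambda_2\lambda_4+\lambda_1\lambda_3\lambda_4+\lambda_2\lambda_3\lambda_4$ to $\lambda_1\lambda_2\lambda_3$, so the exponent becomes just $d$. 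Combining this with Conjecture \ref{specconj}(a)--(c) applied termwise on the LHS, and performing the substitution $q\mapsto -q$, one arrives at the clean identity
\[
\sum_\pi \omega_\pi\,(d)_{\pi_{111}}\,q^{|\pi|}=M(q)^d,
\]
where $(d)_m := d(d-1)\cdots(d-m+1)$ denotes the falling factorial and the sum ranges over all solid partitions.

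The heart of the argument is now a polynomial interpolation in $d$. Group the LHS by height by setting $G_m(q) := \sum_{\pi:\,\pi_{111}=m}\omega_\pi\,q^{|\pi|}$, with $G_0(q)=1$ for the empty partition. Expanding $M(q)^d=(1+(M(q)-1))^d=\sum_{m\geqslant 0}\binom{d}{m}(M(q)-1)^m$ yields
\[
\sum_{m\geqslant 0}(d)_m\,G_m(q)=\sum_{m\geqslant 0}(d)_m\,\frac{(M(q)-1)^m}{m!}.
\]
For each $n\geqslant 0$, the coefficient of $q^n$ on either side involves only finitely many $m\leqslant n$ (since $(M(q)-1)^m=O(q^m)$ and $G_m(q)$ is supported in degrees $\geqslant m$) and is a polynomial in $d$. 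The identity holds for every $d\in\mathbb{Z}_{>0}$, hence as polynomials in $d$. Because $\{(d)_m\}_{m\geqslant 0}$ is a $\mathbb{Q}$-basis of $\mathbb{Q}[d]$, the coefficients of $(d)_m$ on the two sides must agree for every $m$, giving
\[
G_m(q)=\frac{(M(q)-1)^m}{m!},\qquad m\geqslant 0.
\]
Summing against $t^m$ then produces $\sum_\pi\omega_\pi\,t^{\pi_{111}}q^{|\pi|}=\sum_{m\geqslant 0}\frac{t^m(M(q)-1)^m}{m!}=e^{t(M(q)-1)}$, which is the claimed formula; setting $t=1$ recovers the specialization $\sum_\pi \omega_\pi q^{|\pi|}=e^{M(q)-1}$.

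Granted the two input conjectures, the argument is entirely formal, so there is no genuine obstacle. The only care needed is in two bookkeeping points: \emph{(i)} confirming the $\lambda_4$-cancellation in the exponent so that the specialization $\lambda_1+\lambda_2+\lambda_3=0$ is regular on both sides of Conjecture \ref{affineconj}, and \emph{(ii)} confirming that the resulting identity in $d$ is genuinely a polynomial identity in $\mathbb{Q}[d]$, coefficient by coefficient in $q$, and not merely one holding at integer points. Both are routine inspections of the rational function on the RHS.
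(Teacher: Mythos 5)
Your proposal is correct and follows essentially the same route as the paper: both specialize Conjecture \ref{affineconj} at $(d_1,d_2,d_3,d_4)=(0,0,0,-d)$, $\lambda_1+\lambda_2+\lambda_3=0$ to obtain $\sum_\pi \omega_\pi \prod_{l=1}^{\pi_{111}}(d-l+1)\,q^{|\pi|}=M(q)^d$ for all $d\geqslant 1$, and then invert this family of identities to isolate the height-$m$ generating functions $(M(q)-1)^m/m!$. Your phrasing of the inversion via the falling-factorial basis of $\mathbb{Q}[d]$ is just a cleaner packaging of the explicit triangular recursion the paper writes out for $k=1,2,3,\ldots$.
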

\begin{proof}
Consider Conjecture \ref{affineconj} for $d_1=d_2=d_3=0$, $d_4=-d$, and the specialization
\begin{equation}\lambda_1+\lambda_2+\lambda_3=0. \nonumber \end{equation} 
Then the power of $M(-q)$ in Conjecture \ref{affineconj} becomes $d$. According to Conjecture \ref{specconj}, this specialization is well-defined and we get
$$
\sum_{\pi} \omega_\pi  \cdot \Bigg( \prod_{l=1}^{\pi_{111}} (d-(l-1)) \Bigg) q^{|\pi|}= M(q)^d,
$$
for any $d\geqslant1$. Then it is easy to see that
\begin{equation}1+\sum_{\pi_{111}=1} \omega_\pi\,q^{|\pi|} = M(q), \nonumber \end{equation}
\begin{equation}1+2\sum_{\pi_{111}=1} \omega_\pi\,q^{|\pi|}+2!\sum_{\pi_{111}=2} \omega_\pi\,q^{|\pi|} = M(q)^2, \nonumber \end{equation}
\begin{equation}1+3\sum_{\pi_{111}=1} \omega_\pi\,q^{|\pi|}+3\times 2\sum_{\pi_{111}=2} \omega_\pi\,q^{|\pi|}+3!\sum_{\pi_{111}=3} \omega_\pi\,q^{|\pi|} = M(q)^3, 
\nonumber \end{equation}
\begin{equation}\ldots\nonumber \end{equation}
\begin{equation}1+\sum_{i=1}^{k}\frac{k!}{(k-i)!}\sum_{\pi_{111}=i} \omega_\pi\,q^{|\pi|} = M(q)^k, \quad k\geqslant1. \nonumber \end{equation}
Rearranging gives
\begin{equation}t\sum_{\pi_{111}=1} \omega_\pi\,q^{|\pi|} =t(M(q)-1), \nonumber \end{equation}
\begin{equation}t^2\sum_{\pi_{111}=2} \omega_\pi\,q^{|\pi|} =\frac{t^2}{2}\big(M(q)^2-2M(q)+1\big), \nonumber \end{equation}
\begin{equation}t^3\sum_{\pi_{111}=3} \omega_\pi\,q^{|\pi|} =\frac{t^3}{3!}\big(M(q)^3-3M(q)^2+3M(q)-1\big), \nonumber \end{equation}
\begin{equation}\ldots\nonumber \end{equation}
\begin{equation}t^k\sum_{\pi_{111}=k} \omega_\pi\,q^{|\pi|} =\frac{t^k}{k!}\big(M(q)-1\big)^k, \quad k\geqslant1, \nonumber \end{equation}
whose summation gives the equality we want.
\end{proof}
\begin{rmk}
Counting solid partitions is a very difficult question. In fact, MacMahon initially proposed an \emph{incorrect} formula  for its generating function \cite{ABMM}
$$
\sum_{\pi} q^{|\pi|} \stackrel{?}{=} \prod_{n=1}^{\infty} \frac{1}{(1-q^n)^{\frac{1}{2} n(n+1)}}.
$$  
Exact enumeration using computers also does not go very far. 
As Stanley wrote in his PhD thesis \cite{Stanley}\,\footnote{This quote is taken from slides of a talk by S.~Govindarajan, 
Aspects of Mathematics, IMSc, Chennai (2014).} \\

\noindent \textit{``The case $r=2$ has a well-developed theory --- here 2-dimensional partitions are known as plane partitions. (...)
For $r \geqslant 3$, almost nothing is known and (...)
casts only a faint glimmer of light on a vast darkness.''} \\

\noindent We find that a specialization of the weights $L(d_1,d_2,d_3,d_4)_\pi \, \mathsf{w}_{\pi}$, coming naturally from $\DT_4$ theory, gives a weighted count of solid partitions with a nice closed formula \eqref{solid part count}. Of course, one can always find $\omega_\pi$ 
such that \eqref{solid part count} holds (e.g.~simply by expanding the RHS of (\ref{solid part count}) and giving all solid partitions of the same size and height an equal weight). 

Below we will find an explicit (conjectural) formula of $\omega_\pi$ for any solid partition $\pi$ (see Conjecture \ref{compare wpi} and Proposition \ref{evidence compare wpi}). In terms of this explicit formula, it actually becomes rather elementary to prove the counterpart of Theorem \ref{solid partition counting} (i.e.~Proposition \ref{combinatorial wpi thm}). 
Nevertheless, we find it interesting that such weights $\omega_\pi$ naturally arise from $\DT_4$ theory, even though they may have limited combinatorial interest. 
\end{rmk}

\subsection{Combinatorial approach to $\omega_\pi$} 

In this section, we assign an explicit weight $\omega_\pi^c$ to any solid partition (Definition \ref{combinatoric wpi}). Firstly, we unconditionally prove the 
analogue of Theorem \ref{solid partition counting} with $\omega_\pi$ replaced by $\omega_\pi^c$ (Proposition \ref{combinatorial wpi thm}). Secondly, an obvious generalization of Proposition \ref{combinatorial wpi thm} turns out to hold for partitions of any dimension $d$ (Remark \ref{rmk on d dim combinatoric weight}). Thirdly, we conjecture $\omega_\pi = \omega_\pi^c$, for any solid partition $\pi$, and we verify this in many examples (Conjecture \ref{compare wpi} and Proposition \ref{evidence compare wpi}).

\begin{defi}
Let $\xi= \{\xi_{ij}\}_{i,j \geqslant1}$ be a plane partition, i.e.~a sequence of non-negative integers satisfying
\begin{align*}
&\xi_{ij} \geqslant \xi_{i+1,j}, \quad \xi_{ij} \geqslant \xi_{i,j+1}, \quad \forall \, i,j \geqslant 1, \\ 
&|\xi| := \sum_{i,j} \xi_{ij} < \infty. 
\end{align*}
We define the \emph{binary representation} of $\xi$ to be the sequence of integers 
$\{\xi(i,j,k)\}_{i,j,k\geqslant 1}$ given by
$$
\xi(i,j,k) := \left\{\begin{array}{cc} 1 & \textrm{ if\ } k\leqslant \xi_{ij} \\ 0 & \textrm{otherwise.} \end{array}\right.
$$
\end{defi}
\begin{exam}
Suppose $\xi$ is given by $\xi_{11}=2$, $\xi_{21}=1$, $\xi_{12}=1$. Then $\xi(1,1,1)=\xi(1,1,2)=\xi(2,1,1) = \xi(1,2,1) = 1$ and $\xi(i,j,k) = 0$ for all other $i,j,k \geqslant 1$.
\end{exam}
\begin{defi}\label{combinatoric wpi}
Let $\pi = \{\pi_{ijk}\}_{i,j,k \geqslant 1}$ be a (non-empty) solid partition and consider all possible sequences of integers $\{m_\xi\}_\xi$, where the index $\xi$ runs over all (non-empty) plane partitions and $m_\xi \in \mathbb{Z}_{\geqslant 0}$. Define the following collection
\begin{equation} \label{eqnpixi}
\mathcal{C}_\pi := \Bigg\{ \{m_\xi\}_\xi \ \Bigg| \ \pi_{ijk} = \sum_\xi m_\xi \cdot \xi(i,j,k) \ \textrm{for all} \ i,j,k \Bigg\}.
\end{equation}
We define
\begin{equation} \label{summxi}
\omega^c_\pi := \sum_{\{m_\xi\}_\xi \in \mathcal{C}_\pi} \prod_\xi \frac{1}{(m_\xi)!}.
\end{equation}
For the empty solid partition $\pi = \varnothing$ we define $\omega_\pi^c :=1$.
\end{defi}
\begin{rmk}
For each $\{m_\xi\}_\xi\in\mathcal{C}_\pi$, we have 
\begin{equation}|\pi|=\sum_{\xi}m_{\xi}\cdot |\xi|. \nonumber \end{equation}
Hence, $m_{\xi}=0$ if $|\xi|$ is large. Therefore, the collection $\mathcal{C}_{\pi}$ is a finite set and, for each $\{m_\xi\}_\xi\in\mathcal{C}_\pi$, 
there are only finitely many nonzero $m_{\xi}$.
\end{rmk}

\begin{exam}
Suppose $\pi = \{\pi_{ijk}\}_{i,j,k \geqslant 1}$ satisfies $\pi_{ijk} = 0$ unless $i=j= 1$. Then
$$\omega^c_\pi = \prod_{k=1}^{\infty} \frac{1}{(\pi_{11k} - \pi_{11,k+1})!}.$$ 
This is due to the fact that the only plane partitions $\xi=\{\xi_{ijk}\}_{i,j,k \geqslant 1}$ contributing to the defining equation in \eqref{eqnpixi} satisfy $\xi(i,j,k) = 0$ unless $i=j=1$. Define $\xi^{(n)}$ to be the plane partition with binary representation satisfying $\xi(1,1,k) = 1$ for all $1 \leqslant k \leqslant n$ and $\xi(i,j,k) = 0$ otherwise. Then $\mathcal{C}_{\pi}$ only consists of one element $\{m_\xi\}_\xi$:
$$
m_\xi = \left\{\begin{array}{cc} \pi_{11k} - \pi_{11,k+1} & \textrm{if } \xi = \xi^{(k)} \\ 0 & \textrm{otherwise.} \end{array} \right.
$$  
\end{exam}

\begin{exam}
Consider the solid partition $\pi$ of Example \ref{maxsquared}, i.e.~$\pi_{111}=2$, $\pi_{211} = \pi_{121} = \pi_{112} = 1$, and $\pi_{ijk} = 0$ otherwise. Then 
$
\omega_\pi^c = 4.
$
Indeed $\mathcal{C}_{\pi}$ contains the following four sequences, each contributing 1 to the sum in \eqref{summxi}:
\begin{itemize}
\item Consider the plane partitions $\xi^{(1)}$ and $\xi^{(2)}$ defined by the following binary representations: $\xi^{(1)}(1,1,1) = \xi^{(1)}(2,1,1) = \xi^{(1)}(1,2,1) = \xi^{(1)}(1,1,2) = 1$ and $\xi^{(1)}(i,j,k)=0$ otherwise; $\xi^{(2)}(1,1,1) = 1$ and $\xi^{(2)}(i,j,k)=0$ otherwise. Define $\{m_\xi\}_\xi$ by $m_\xi = 1$ if $\xi = \xi^{(1)}$ or $\xi^{(2)}$ and $m_\xi = 0$ otherwise.
\item Consider the plane partitions $\xi^{(1)}$ and $\xi^{(2)}$ defined by the following binary representations: $\xi^{(1)}(1,1,1) = \xi^{(1)}(2,1,1) = \xi^{(1)}(1,2,1) = 1$ and $\xi^{(1)}(i,j,k)=0$ otherwise; $\xi^{(2)}(1,1,1) = \xi^{(2)}(1,1,2) = 1$ and $\xi^{(2)}(i,j,k)=0$ otherwise. Define $\{m_\xi\}_\xi$ by $m_\xi = 1$ if $\xi = \xi^{(1)}$ or $\xi^{(2)}$ and $m_\xi = 0$ otherwise.
\item Consider the plane partitions $\xi^{(1)}$ and $\xi^{(2)}$ defined by the following binary representations: $\xi^{(1)}(1,1,1) = \xi^{(1)}(2,1,1) = \xi^{(1)}(1,1,2) = 1$ and $\xi^{(1)}(i,j,k)=0$ otherwise; $\xi^{(2)}(1,1,1) = \xi^{(2)}(1,2,1) = 1$ and $\xi^{(2)}(i,j,k)=0$ otherwise. Define $\{m_\xi\}_\xi$ by $m_\xi = 1$ if $\xi = \xi^{(1)}$ or $\xi^{(2)}$ and $m_\xi = 0$ otherwise.
\item Consider the plane partitions $\xi^{(1)}$ and $\xi^{(2)}$ defined by the following binary representations: $\xi^{(1)}(1,1,1) = \xi^{(1)}(1,2,1) = \xi^{(1)}(1,1,2) = 1$ and $\xi^{(1)}(i,j,k)=0$ otherwise; $\xi^{(2)}(1,1,1) = \xi^{(2)}(2,1,1) = 1$ and $\xi^{(2)}(i,j,k)=0$ otherwise. Define $\{m_\xi\}_\xi$ by $m_\xi = 1$ if $\xi = \xi^{(1)}$ or $\xi^{(2)}$ and $m_\xi = 0$ otherwise.
\end{itemize} 
\end{exam}

The combinatorial weights $\omega_\pi^c$ lead to the following generating series:
\begin{prop} \label{combinatorial wpi thm}
The following identity holds
$$
\sum_{\pi} \omega^c_\pi \,t^{\pi_{111}} \, q^{|\pi|} = e^{t(M(q)-1)},
$$
where the sum is over all solid partitions, $t$ is a formal variable, and $M(q)$ denotes the MacMahon function. In particular, when $t=1$, we have
\begin{equation}
\sum_{\pi} \omega^c_\pi\, q^{|\pi|} = e^{M(q)-1}. \nonumber 
\end{equation}
\end{prop}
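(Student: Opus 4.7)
The plan is to switch the order of summation in the definition of $\omega_\pi^c$, which will turn the double sum into a single sum over arbitrary sequences $\{m_\xi\}_\xi$ of non-negative integers (with finite support), indexed by non-empty plane partitions $\xi$.

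First I would expand
$$
\sum_\pi \omega_\pi^c \, t^{\pi_{111}} q^{|\pi|} = \sum_\pi \sum_{\{m_\xi\}_\xi \in \mathcal{C}_\pi} \prod_\xi \frac{1}{m_\xi !} \cdot t^{\pi_{111}} q^{|\pi|}
$$
and then swap the order of summation. The crucial step is to check that every sequence $\{m_\xi\}_\xi$ of non-negative integers (with finite support, indexed by non-empty plane partitions) lies in $\mathcal{C}_\pi$ for a \emph{unique} solid partition $\pi$. The uniqueness is built into the defining equation $\pi_{ijk} = \sum_\xi m_\xi \,\xi(i,j,k)$, so the content is that the right-hand side is always a solid partition. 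This holds because for each plane partition $\xi$ the binary representation $\xi(\cdot,\cdot,\cdot)$ is non-increasing in each of the three coordinate directions: $\xi(i,j,k)\geqslant \xi(i+1,j,k)$, $\xi(i,j,k)\geqslant \xi(i,j+1,k)$, and $\xi(i,j,k)\geqslant \xi(i,j,k+1)$ (the last one by the very definition of the binary representation). A non-negative integer combination of sequences with this monotonicity property inherits the same monotonicity, and finiteness of $\sum_\xi m_\xi |\xi|$ ensures $|\pi|<\infty$.

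Under this bijection the statistics behave well: by definition $|\pi| = \sum_\xi m_\xi \,|\xi|$, and since every non-empty plane partition $\xi$ has $\xi_{11}\geqslant 1$ (by monotonicity from wherever $\xi$ is supported), one has $\xi(1,1,1)=1$ and thus $\pi_{111} = \sum_\xi m_\xi$. Therefore the reorganized sum becomes
$$
\sum_{\{m_\xi\}_\xi} \prod_\xi \frac{\bigl(t\, q^{|\xi|}\bigr)^{m_\xi}}{m_\xi!} \;=\; \prod_{\xi \text{ non-empty}} \exp\!\bigl(t\, q^{|\xi|}\bigr) \;=\; \exp\!\Bigl(t \sum_{\xi \text{ non-empty}} q^{|\xi|}\Bigr).
$$

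The proof finishes by recalling that MacMahon's generating function $M(q) = \sum_\xi q^{|\xi|}$ sums over all plane partitions (including the empty one, which contributes $1$), so $\sum_{\xi \text{ non-empty}} q^{|\xi|} = M(q)-1$, yielding $\exp(t(M(q)-1))$ as claimed. I do not anticipate any serious obstacle: the only slightly delicate point is the monotonicity check that makes the swap of summation well-defined, and the observation that $\xi(1,1,1)=1$ for every non-empty $\xi$ so that the $t$-grading collapses to the total number $\sum_\xi m_\xi$ of constituents.
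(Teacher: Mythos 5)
Your proposal is correct and is essentially the paper's own argument run in the opposite direction: the paper expands $e^{t(M(q)-1)}$ as a product of exponentials over non-empty plane partitions and collects the resulting monomials according to their associated solid partition, while you expand the left-hand side and swap the order of summation to arrive at the same product; the underlying bijection between finitely supported multiplicity sequences $\{m_\xi\}_\xi$ and pairs $(\pi,\{m_\xi\}_\xi\in\mathcal{C}_\pi)$, together with the identities $|\pi|=\sum_\xi m_\xi|\xi|$ and $\pi_{111}=\sum_\xi m_\xi$, is identical. Your explicit monotonicity check that $\sum_\xi m_\xi\,\xi(i,j,k)$ is indeed a solid partition is a welcome detail that the paper leaves implicit, but it does not change the route.
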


\begin{proof}
The RHS can be rewritten as
\begin{equation} \label{prodprod}
\Bigg(\prod_{\xi \vdash 1} e^{tq} \Bigg) \Bigg( \prod_{\xi \vdash 2} e^{tq^2} \Bigg) \Bigg(\prod_{\xi \vdash 3} e^{tq^3} \Bigg) \cdots
\end{equation}
where $\prod_{\xi \vdash n}$ denotes the finite product over all plane partitions $\xi$ of size $n$. 

Choose a sequence of multiplicities $\{m_\xi\in \mathbb{Z}_{\geqslant 0}\}_\xi$ with only finitely many $m_\xi > 0$. This choice gives rise to a solid partition $\pi$ defined as follows
$$
\pi_{ijk} := \sum_\xi m_\xi \cdot \xi(i,j,k), \quad \textrm{for all } i,j,k \geqslant 1, 
$$
which we call the \emph{solid partition associated to $\{m_\xi\}_\xi$}. Conversely, for a fixed solid partition $\pi$, we can consider the collection of all sequences $\{m_\xi\in \mathbb{Z}_{\geqslant 0}\}_\xi$ with only finitely many $m_\xi > 0$ whose associated solid partition is $\pi$. This collection is precisely $\mathcal{C}_\pi$. 

Each term arising from multiplying out the infinite product \eqref{prodprod} corresponds to a sequence $\{m_\xi\in \mathbb{Z}_{\geqslant 0}\}_\xi$ with only finitely many $m_\xi > 0$.  Such a term contributes
\begin{equation} \label{oneterm}
\prod_{\xi} \frac{t^{m_\xi}}{(m_\xi) !}\, q^{m_\xi |\xi|}.
\end{equation}
Now collect all terms of the form \eqref{oneterm} such that $\{m_\xi\}_\xi$ has associated solid partition $\pi$. This gives
\begin{align*}
\sum_{\{m_\xi\}_\xi \in \mathcal{C}_\pi} \prod_{\xi} \frac{t^{m_\xi}}{(m_\xi) !}q^{m_\xi |\xi|} &= \Bigg(\sum_{\{m_\xi\}_\xi \in \mathcal{C}_\pi} \prod_{\xi} \frac{1}{(m_\xi) !} \Bigg)t^{\pi_{111}}q^{|\pi|}= \omega^c_\pi\,t^{\pi_{111}} q^{|\pi|},
\end{align*}
where we use $\sum_{\xi} m_\xi=\pi_{111}$ in the first equality.
Summing over all distinct solid partitions gives the formula of the proposition.
\end{proof}

\begin{rmk}\label{rmk on d dim combinatoric weight}
We may also start with $d$-partitions\footnote{E.g.~1-partitions are partitions, 2-partitions are plane partitions, 3-partitions are solid partitions.}
$\pi$ for any $d \geqslant 1$ and define $\omega^c_\pi$ completely analogously using $(d-1)$-partitions $\xi$ and their binary representations. 
The same proof yields$$
\log \sum_{d\textrm{-partitions}\, \pi} \omega^c_\pi \,t^{\pi_{111}} \, q^{|\pi|} = t\sum_{(d-1)\textrm{-partitions}\, \pi,\, |\pi|\geqslant1} q^{|\pi|},
$$
where we use the convention that there exists a single zero-dimensional partition of each size.  
\end{rmk}

We end this section with the observation that a specialization of $\DT_4$ theory precisely seems to recover the combinatorics that we just described (and this is how we found the weights $\omega_\pi^c$ in the first place).
\begin{conj}\label{compare wpi}
For any solid partition $\pi$, we have $\omega_\pi = \omega_\pi^c$, where $\omega_\pi$ is defined using $\DT_4$ theory in Conjecture \ref{specconj} and $\omega^c_\pi$ is the explicit combinatorial weight of Definition \ref{combinatoric wpi}.
\end{conj}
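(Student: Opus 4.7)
The plan is to prove Conjecture \ref{compare wpi} by directly computing $\omega_\pi$ from the vertex formalism and matching it, term-by-term, with the combinatorial weight $\omega_\pi^c$. Throughout I assume Conjectures \ref{affineconj}, \ref{affineconj unique} and \ref{specconj}, so that $\omega_\pi$ is a well-defined rational number.

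The key structural observation is that, under the specialization $\lambda_4=0$ (equivalently $t_4=1$), the ``quartic cross-term'' in
$$
\mathsf{V}_\pi = Z_\pi + \frac{\overline{Z}_\pi}{t_1t_2t_3t_4} - \frac{Z_\pi\,\overline{Z}_\pi(1-t_1)(1-t_2)(1-t_3)(1-t_4)}{t_1t_2t_3t_4}
$$
vanishes as a virtual character thanks to the factor $(1-t_4)$. Thus, at the character level, $\mathsf{V}_\pi|_{t_4=1} = Z_\pi|_{t_4=1} + \overline{Z}_\pi|_{t_4=1}$, and the canonical slice decomposition $Z_\pi|_{t_4=1} = \sum_{l=1}^{\pi_{111}} Z_{\xi^{(l)}}$ with $\xi^{(l)}_{ij} := \#\{k\geqslant 1 : \pi_{ijk}\geqslant l\}$ builds a natural bridge between the $\DT_4$ vertex and plane partitions.

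First I would reinstate the insertion
$$
L_\pi(0,0,0,-d)\Big|_{\lambda_4=0} = \prod_{i,j,k}\bigl((i-1)\lambda_1+(j-1)\lambda_2+(k-1)\lambda_3\bigr)^{\pi_{ijk}},
$$
which carries precisely the zeros needed to cancel the trivial-weight poles of the half-Euler class at $\lambda_4=0$, yielding a finite rational function of $\lambda_1,\lambda_2,\lambda_3,d$ as predicted by Conjecture \ref{specconj}(a). Using the slice decomposition, I would then rewrite the regularized expression as a sum over ordered tuples $(\xi^{(1)},\ldots,\xi^{(\pi_{111})})$ of plane partitions whose binary representations sum to $\pi$; the $\lambda$-independence asserted in Conjecture \ref{specconj}(b) should follow from an equivariant cancellation within each tuple, while the polynomial $\prod_{l=1}^{\pi_{111}}(d-(l-1))$ emerges from the $d$-dependence of $L_\pi$ along the $x_4$-direction. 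The desired identification with $(-1)^{|\pi|}\omega_\pi^c\prod_{l=1}^{\pi_{111}}(d-(l-1))$ is then a shuffle identity: dividing the sum over ordered tuples by $\pi_{111}!$ and regrouping by multi-sets $\{m_\xi\}$ recovers $\sum_{\{m_\xi\}\in\mathcal{C}_\pi}\prod_\xi 1/m_\xi! = \omega_\pi^c$ from Definition \ref{combinatoric wpi}.

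The main obstacle will be the careful regularization of the half-Euler class at $\lambda_4=0$: although the quartic term of $\mathsf{V}_\pi$ vanishes at the character level, $e_T(\mathsf{V}_\pi,Q)$ involves a choice of square root, and both the zeros coming from $L_\pi$ and the poles coming from the vertex must be tracked simultaneously. A secondary difficulty is consistent sign tracking, since the existence and uniqueness of the orientation choices for $\mathsf{w}_\pi$ is itself Conjecture \ref{affineconj unique}, currently verified only modulo $q^7$; any complete proof of Conjecture \ref{compare wpi} must therefore establish this sign consistency in general, presumably via a parity or cobordism-type argument. Finally, compatibility of the shuffle identity with the chosen sign conventions needs to be verified explicitly, which I expect to be straightforward once the previous obstacles are resolved.
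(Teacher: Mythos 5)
The statement you are trying to prove is stated in the paper as a \emph{conjecture}: the paper offers no proof of $\omega_\pi=\omega_\pi^c$, only computational evidence (Proposition \ref{evidence compare wpi}, checked by Maple for $|\pi|\leqslant 6$ and a list of larger examples), together with the observation that the two weights have the same generating function (Theorem \ref{solid partition counting} versus Proposition \ref{combinatorial wpi thm}) --- which, as the paper itself remarks, does not determine the weights termwise. Your proposal is therefore not being measured against a proof in the paper but against the open problem itself, and as it stands it has a genuine gap at its center. The vertex character specialized at $t_4=1$ gives you only the \emph{canonical} slice decomposition $Z_\pi|_{t_4=1}=\sum_{l=1}^{\pi_{111}}Z_{\xi^{(l)}}$, i.e.\ a single element of $\mathcal{C}_\pi$; the combinatorial weight $\omega_\pi^c$ is a sum over \emph{all} of $\mathcal{C}_\pi$, that is over all ways of writing $\pi_{ijk}=\sum_\xi m_\xi\,\xi(i,j,k)$ with arbitrary plane partitions $\xi$, most of which are not slices of $\pi$ in any geometric sense. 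Your passage from the canonical decomposition to ``a sum over ordered tuples $(\xi^{(1)},\ldots,\xi^{(\pi_{111})})$ of plane partitions whose binary representations sum to $\pi$,'' each contributing with equal weight so that the multinomial count $\pi_{111}!/\prod_\xi m_\xi!$ produces $\omega_\pi^c$, is precisely the content of the conjecture and is asserted rather than derived. Nothing in the vertex formalism is shown to produce that sum.

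There is a second, more technical problem with the proposed starting point. The displayed specialization
\begin{equation*}
L_\pi(0,0,0,-d)\Big|_{\lambda_4=0}=\prod_{i,j,k}\bigl((i-1)\lambda_1+(j-1)\lambda_2+(k-1)\lambda_3\bigr)^{\pi_{ijk}}
\end{equation*}
is identically zero for every nonempty $\pi$ (the $(i,j,k)=(1,1,1)$ factor is $0^{\pi_{111}}$) and has already destroyed the $d$-dependence that is supposed to produce $\prod_{l=1}^{\pi_{111}}(d-(l-1))$. The true quantity is a $0/0$ limit: the factors $(l-1-d)\lambda_4$ of $L_\pi$ at $(1,1,1)$ must be paired against order-$\lambda_4$ poles of $\mathsf{w}_\pi$ before taking $\lambda_4\to 0$, and the half-Euler class does not commute with specialization once weight-zero terms appear; likewise the vanishing of the quartic term of $\mathsf{V}_\pi$ at the character level does not let you compute $e_T(-\mathsf{V}_\pi)$ by specializing the character first. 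The existence of this limit is exactly Conjecture \ref{specconj}(a), which you assume, but assuming existence does not compute the value, and the value is what must be matched to $\omega_\pi^c$. Together with the deferred sign analysis (itself resting on Conjecture \ref{affineconj unique}, verified only modulo $q^7$), this means the proposal is a plausible strategy outline rather than a proof; the decisive step --- why the regularized vertex weight equals a uniform sum over \emph{all} decompositions in $\mathcal{C}_\pi$ --- is missing.
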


Using our Maple program, which calculates $\mathsf{w}_{\pi}$ for a given $\pi$, we verified the following:
\begin{prop}  \label{evidence compare wpi}
\hfill
\begin{itemize}
\item Conjecture \ref{compare wpi} is true for any solid partition $\pi$ of size $|\pi|\leqslant 6$.
\item $|\omega_\pi| = \omega_\pi^c$ for any solid partition $\pi$ satisfying $\pi_{111}=1$.     
\item $|\omega_\pi| = \omega_\pi^c$ for the explicit list of solid partitions of size $\leqslant15$ given in Appendix A.
\end{itemize}
\end{prop}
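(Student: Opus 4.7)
The plan is to verify the three bullet points by a combination of a Maple computation (to produce $\omega_\pi$ from $\mathsf{w}_\pi$) and an independent combinatorial enumeration (to produce $\omega_\pi^c$), then compare. Recall that $\omega_\pi$ is extracted from $\mathsf{w}_\pi$ via Conjecture \ref{specconj}, more precisely via
\begin{equation*}
L_\pi(0,0,0,-d) \, \mathsf{w}_\pi \Big|_{\lambda_1+\lambda_2+\lambda_3=0} = (-1)^{|\pi|} \, \omega_\pi \, \prod_{l=1}^{\pi_{111}} (d-(l-1)),
\end{equation*}
so for each $\pi$ in our list the Maple program (which computes $\mathsf{w}_\pi$ from the defining expression \eqref{defV}) outputs a rational function which we specialize at $\lambda_1+\lambda_2+\lambda_3=0$, read off the coefficient of $\prod_{l=1}^{\pi_{111}}(d-(l-1))$, and take its absolute value. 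For the first bullet (all $|\pi|\leqslant 6$) we have already picked out the unique sign for each $\mathsf{w}_\pi$ when establishing Theorem \ref{verify affine conj}, so $\omega_\pi$ is unambiguously determined and we compare it (with sign) to $\omega_\pi^c$. For the third bullet the specific partitions are listed in Appendix A and are handled individually.

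Independently, $\omega_\pi^c$ is computed by explicit enumeration of the set $\mathcal{C}_\pi$ in Definition \ref{combinatoric wpi}. Since any element $\{m_\xi\}_\xi\in\mathcal{C}_\pi$ must satisfy $\sum_\xi m_\xi |\xi| = |\pi|$, only finitely many plane partitions $\xi$ with $|\xi|\leqslant |\pi|$ can have $m_\xi>0$; one loops over the $\binom{\text{number of plane partitions of size} \leqslant |\pi|}{\cdot}$ possibilities and checks the constraint $\pi_{ijk} = \sum_\xi m_\xi\cdot\xi(i,j,k)$ for all $(i,j,k)$ in the support of $\pi$. Summing $\prod_\xi 1/(m_\xi)!$ over valid sequences gives $\omega_\pi^c$. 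The comparison with the Maple output then completes the first and third bullets.

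The second bullet admits a clean closed-form argument. If $\pi_{111}=1$, then $\pi_{ijk}\in\{0,1\}$ and $\pi_{ij1}$ defines a 2D partition $\xi_0$ with $\pi_{ijk}=0$ for $k\geqslant 2$. Any $\{m_\xi\}_\xi\in\mathcal{C}_\pi$ has $\sum_\xi m_\xi = \pi_{111}=1$, so exactly one $m_\xi$ is nonzero and equals $1$; the constraint forces this $\xi$ to be the plane partition whose binary representation is $\xi(i,j,1)=\pi_{ij1}$ and zero for $k\geqslant 2$. Thus $\mathcal{C}_\pi$ has a single element and $\omega_\pi^c=1$. On the other hand, $|\omega_\pi|=1$ was already established in Proposition \ref{verify conj on poly dep} (second bullet), which in turn follows from Theorem \ref{thm for equiv sm div} combined with \cite[I, Sect.~4]{MNOP} (the relevant $3$-fold vertex with $\pi_{111}=1$ has weight $1$).

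The main obstacle is computational cost rather than conceptual: the number of solid partitions and the size of $\mathcal{C}_\pi$ both grow very rapidly in $|\pi|$, and $\mathsf{w}_\pi$ is a rational function in $\lambda_1,\ldots,\lambda_4$ whose numerator and denominator have degree roughly $|\pi|^2$, so even a single evaluation is nontrivial. This is why the verification for $7\leqslant |\pi|\leqslant 15$ is done only for the selected partitions in Appendix A rather than exhaustively. In practice, to keep the symbolic computation tractable one simplifies $\mathsf{w}_\pi^2 = (-1)^{|\pi|}e_T(-\mathsf{V}_\pi)$ using \eqref{equi square equality} first, takes the square root only after the specialization $\lambda_1+\lambda_2+\lambda_3=0$, and fixes the sign to make the resulting rational number positive (as in Remark \ref{existencesgns}).
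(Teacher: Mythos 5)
Your approach coincides with the paper's: Proposition \ref{evidence compare wpi} is presented there purely as a computational verification (the Maple program produces $\mathsf{w}_\pi$, the specialization and sign conventions are those of Remark \ref{existencesgns} and the proof of Proposition \ref{verify conj on poly dep}, and $\omega_\pi^c$ is obtained by direct enumeration of $\mathcal{C}_\pi$), so the first and third bullets are handled exactly as you describe.

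One correction to your closed-form argument for the second bullet. From $\pi_{111}=1$ you correctly get $\pi_{ijk}\in\{0,1\}$ and, via $\sum_\xi m_\xi=\pi_{111}=1$, that any element of $\mathcal{C}_\pi$ has exactly one $m_\xi$ equal to $1$ and the rest zero. But it is \emph{not} true that $\pi_{ijk}=0$ for $k\geqslant 2$ (e.g.\ $Z_\pi=1+t_3$ has $\pi_{111}=\pi_{112}=1$), so the unique $\xi$ is not the one with binary representation $\xi(i,j,1)=\pi_{ij1}$ and zero for $k\geqslant2$; it is the plane partition $\xi_{ij}=\sum_k\pi_{ijk}=\max\{k:\pi_{ijk}=1\}$. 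You should also record the (easy) existence statement that every $0/1$-valued solid partition actually \emph{is} the binary representation of this $\xi$ — this follows from the monotonicity inequalities defining a solid partition — since otherwise $\mathcal{C}_\pi$ would be empty and $\omega_\pi^c$ would be $0$ rather than $1$. With that repair, $\mathcal{C}_\pi$ is a singleton, $\omega_\pi^c=1$, and combined with $|\omega_\pi|=1$ from Proposition \ref{verify conj on poly dep} the second bullet follows as you intend.
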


\appendix

\section{Explicit calculations of $|\omega_\pi|$}

Using our Maple program, which calculates $\mathsf{w}_\pi$ for a given solid partition $\pi$, we checked that
\begin{align}
\begin{split} \label{appendixeqn}
L_\pi(0,0,0,-d) \, \mathsf{w}_\pi \Big|_{\lambda_1+\lambda_2+\lambda_3=0} &= (-1)^{|\pi|} \, \omega_\pi \, \prod_{l=1}^{\pi_{111}} (d-(l-1)), \\
\omega_\pi  &= \omega_\pi^c, 
\end{split}
\end{align}
hold for all solid partitions $\pi$ with $|\pi| \leqslant 6$. Here the signs of $\mathsf{w}_\pi$ are the ones induced from Conjecture \ref{affineconj unique}. 
We also checked that the \emph{absolute value} of equations \eqref{appendixeqn} hold for:
\begin{itemize}
\item \textbf{(Height 1 and $d=1$)} Let $\pi$ be a solid partition with $\pi_{111}=1$. Then $|\omega_\pi| = \omega_\pi^c = 1$.
\item \textbf{(1-Partitions of size $\leqslant 10$)} All solid partitions $\pi = \{\pi_{ijk}\}_{i,j,k \geqslant 1}$ with $\pi_{ijk} = 0$ unless $i=j=1$ and $|\pi| \leqslant10$. Then 
$$|\omega_\pi| = \omega^c_\pi = \prod_{k=1}^{\infty} \frac{1}{(\pi_{11k} - \pi_{11,k+1})!}.$$ 
\item \textbf{(Size 7)} Consider the solid partition $\pi$ corresponding to
$$
Z_\pi = 1+t_1+t_2+t_1t_2+t_3+t_4+t_4^2. 
$$
Then $|\omega_\pi| = \omega_\pi^c = \frac{3}{2}$.
\item \textbf{(Size 8)} Consider the solid partition $\pi$ corresponding to
$$
Z_\pi = 1+t_1+t_2+t_1t_2+t_3+t_4+t_1 t_4+t_4^2. 
$$
Then $|\omega_\pi| = \omega_\pi^c = 3$.
\item \textbf{(Size 9)} Consider the solid partition $\pi$ corresponding to
$$
Z_\pi = 1+t_1+t_1^2+t_2+t_1t_2+t_3+t_4+t_1 t_4+t_4^2. 
$$
Then $|\omega_\pi| = \omega_\pi^c = 6$.
\item \textbf{(Size 10)} Consider the solid partition $\pi$ corresponding to
$$
Z_\pi = 1+t_1+t_1^2+t_2+t_1t_2+t_2t_3+t_3+t_4+t_1 t_4+t_4^2. 
$$
Then $|\omega_\pi| = \omega_\pi^c = 2$.
\item \textbf{(Size 11)} Consider the solid partition $\pi$ corresponding to
$$
Z_\pi = 1+t_1+t_1^2+t_2+t_1t_2+t_2t_3+t_3+t_4+t_1 t_4+t_2t_4+t_4^2. 
$$
Then $|\omega_\pi| = \omega_\pi^c = 8$.
\item \textbf{(Size 12)} Consider the solid partition $\pi$ corresponding to
$$
Z_\pi = 1+t_1+t_1^2+t_2+t_1t_2+t_2t_3+t_3+t_4+t_1 t_4+t_2t_4+t_4^2+ t_4^3. 
$$
Then $|\omega_\pi| = \omega_\pi^c = 6$.
\item \textbf{(Size 13)} Consider the solid partition $\pi$ corresponding to
$$
Z_\pi = 1+t_1+t_1^2+t_2+t_1t_2+t_2t_3+t_3+t_4+t_1 t_4+t_2t_4+t_4^2+ t_4^3+ t_4^4. 
$$
Then $|\omega_\pi| = \omega_\pi^c = \frac{8}{3}$.
\item \textbf{(Size 14)} Consider the solid partition $\pi$ corresponding to
$$
Z_\pi = 1+t_1+t_1^2+t_2+t_1t_2+t_2t_3+t_3+t_4+t_1 t_4+t_2t_4+t_4^2+ t_4^3+ t_4^4 + t_4^5. 
$$
Then $|\omega_\pi| = \omega_\pi^c = \frac{5}{6}$.
\item \textbf{(Size 15)} Consider the solid partition $\pi$ corresponding to
$$
Z_\pi = 1+t_1+t_1^2+t_2 + t_2^2+t_1t_2+t_2t_3+t_3+t_4+t_1 t_4+t_2t_4+t_4^2+ t_4^3+ t_4^4 + t_4^5. 
$$
Then $|\omega_\pi| = \omega_\pi^c = \frac{5}{3}$.
\end{itemize}

\section{Nekrasov's conjecture}

The first author heard the following related conjecture (written below in terms of equivariant $\DT_4$ theory) from Professor Nikita Nekrasov during a visit to the Simons Center for Geometry and Physics in October 2016. For a recent much more general K-theoretical version, see \cite{Nekrasov}.

Let $X=\mathbb{C}^4$ and let $T=\{t\in (\mathbb{C}^*)^4\,|\, t_1t_2t_3t_4=1\}$ be the Calabi-Yau torus. Denote the equivariant parameters of $(\mathbb{C}^*)^4$ by $\lambda_i$ ($i=1,2,3,4$). We define
\begin{equation}[\Hilb^n(\mathbb{C}^4)]^{\mathrm{vir}}_{T,o(\mathcal{L})}:=\sum_{Z\in\Hilb^n(\mathbb{C}^4)^T} \frac{e_T\big(\Ext^{2}(I_Z,I_Z),Q\big)}{e_T\big(\Ext^{1}(I_Z,I_Z)\big)}. \nonumber \end{equation} 
As in Definition \ref{def of equ virtual class}, this depends on a choice of orientation $o(\mathcal{L})$ as in Definition \ref{def of equ virtual class} which is used to define the half Euler classes. Consider the generating function
\begin{equation}Z_{\mathbb{C}^4}:=\sum_{n=0}^{\infty}\Big(\int_{[\Hilb^n(\mathbb{C}^4)]^{\mathrm{vir}}_{T,o(\mathcal{L})}}1\Big)\cdot q^n
\in \frac{\mathbb{Q}(\lambda_1,\lambda_2,\lambda_3,\lambda_4)}{(\lambda_1+\lambda_2+\lambda_3+\lambda_4)}[\![q]\!]. 
\nonumber \end{equation}
\begin{conj}\label{nek conj}
There exist choices of orientation such that 
\begin{equation}Z_{\mathbb{C}^4}=e^{\frac{(\lambda_1+\lambda_2)(\lambda_1+\lambda_3)(\lambda_2+\lambda_3)}{\lambda_1\lambda_2\lambda_3(\lambda_1+\lambda_2+\lambda_3)}q}. \nonumber \end{equation}
\end{conj}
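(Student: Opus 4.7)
The plan is to recast $Z_{\mathbb{C}^4}$ in terms of the equivariant vertex weights $\mathsf{w}_\pi$ of Definition \ref{wpi} and then reduce the conjecture to a combinatorial identity on solid partitions. By Proposition \ref{equi euler class vertex} applied to $X=\mathbb{C}^4$ (with its single maximal chart), we have
\[
Z_{\mathbb{C}^4} \;=\; \sum_{\pi} (\pm)\,\mathsf{w}_\pi\, q^{|\pi|},
\]
where the sum runs over all solid partitions and the sign ambiguity encodes the choice of orientation $o(\lL)$. The conjecture is therefore equivalent to asserting that, for some consistent choice of signs, $\log Z_{\mathbb{C}^4}$ is \emph{linear} in $q$ with leading coefficient $A := \tfrac{(\lambda_1+\lambda_2)(\lambda_1+\lambda_3)(\lambda_2+\lambda_3)}{\lambda_1\lambda_2\lambda_3(\lambda_1+\lambda_2+\lambda_3)}$.

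\textbf{Base case.} First I would verify the coefficient of $q$. The unique solid partition of size $1$ has $Z_\pi = 1$, so \eqref{defV} gives
\[
\mathsf{V}_\pi \;=\; 1 + \tfrac{1}{t_1t_2t_3t_4} - \tfrac{(1-t_1)(1-t_2)(1-t_3)(1-t_4)}{t_1t_2t_3t_4}.
\]
After imposing $t_1t_2t_3t_4=1$, a direct expansion of $e_T(-\mathsf{V}_\pi)$ should yield $A^2$, so that $\mathsf{w}_\pi = \pm A$. Choosing the positive root then matches the claimed coefficient of $q$ in the exponential.

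\textbf{Main step.} The heart of the problem is to show that every coefficient of $q^n$ in $\log Z_{\mathbb{C}^4}$ vanishes for $n \geqslant 2$, for some choice of signs. I would pursue two complementary approaches. First, a low-order verification: using the Maple program from Theorem \ref{verify affine conj} one can tabulate $\mathsf{w}_\pi$ for all solid partitions up to some size, and check both the identity and the consistency of the required sign choices with those already singled out (as in Remark \ref{existencesgns}) by Conjectures \ref{affineconj} and \ref{specconj}. Secondly, a structural approach: one would try to construct a sign-reversing involution on ``reducible'' solid partitions pairing equivariant weights of opposite sign, analogously to the connected/disconnected splitting behind the MNOP topological vertex in dimension three, so that the surviving contributions to $\log Z_{\mathbb{C}^4}$ come only from a single box. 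The target $\log Z_{\mathbb{C}^4} = Aq$ is precisely what such an involution would produce.

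\textbf{The principal obstacle.} The hard part will be establishing the cancellations combinatorially for \emph{all} solid partitions. In contrast to the 3-dimensional case \cite[I, Sect.~4]{MNOP}, solid partitions lack a tractable product formula and the square-root in Definition \ref{wpi} entangles the cancellation problem with the orientation-sign problem. A promising alternative is to derive Conjecture \ref{nek conj} as a cohomological limit of Nekrasov's more general $K$-theoretic conjecture \cite{Nekrasov}, but matching the half-Euler class orientations on the two sides of such a limit is itself delicate and, I expect, the true bottleneck to completing the proof.
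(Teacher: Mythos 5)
There is a genuine gap, though an unavoidable one: the statement you are addressing is a \emph{conjecture} in the paper, and the paper itself offers no proof --- only a Maple verification modulo $q^{7}$ (using the signs of Remark \ref{existencesgns}) and a uniqueness check on the signs modulo $q^{5}$. Your setup is sound and matches the paper's framework: by Proposition \ref{equi euler class vertex} with a single chart, $Z_{\mathbb{C}^4}=\sum_\pi (\pm)\mathsf{w}_\pi q^{|\pi|}$, and the conjecture is indeed equivalent to $\log Z_{\mathbb{C}^4}=Aq$ for a suitable choice of signs. Your one-box computation is essentially right, with one small slip: imposing $t_1t_2t_3t_4=1$ in \eqref{defV} gives $-\mathsf{V}_\pi=\sum_{i<j}t_it_j-\sum_i(t_i+t_i^{-1})$, whence $e_T(-\mathsf{V}_\pi)=-A^2$ rather than $A^2$; it is the factor $(-1)^{|\pi|}$ in Definition \ref{wpi} that restores $\mathsf{w}_\pi=\pm A$.

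The gap is in your ``main step'': nothing there constitutes an argument. The proposed sign-reversing involution on solid partitions is never constructed, and there is no reason given that such an involution exists --- precisely because, as you note, $\mathsf{w}_\pi$ is only defined up to a sign via a square root, the cancellation problem cannot even be posed cleanly until the orientation problem is solved, and no closed product formula for $\mathsf{w}_\pi$ is available in dimension four (unlike \cite{MNOP}). The fallback of ``tabulate with Maple up to some size'' only reproduces the paper's own partial evidence (the Proposition following Conjecture \ref{nek conj}) and cannot establish the vanishing of the $q^n$ coefficients of $\log Z_{\mathbb{C}^4}$ for all $n\geqslant 2$. Likewise, deriving the statement as a cohomological limit of \cite{Nekrasov} presupposes a proof of the $K$-theoretic conjecture, which is not supplied. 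In short, you have correctly reduced the conjecture to the combinatorial identity it already was, verified the linear term, and honestly flagged the obstruction --- but no proof has been given, and to your credit none exists in the paper either.
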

Using the signs discussed in Remark \ref{existencesgns}, we checked the following with our Maple program:
\begin{prop}
Conjecture \ref{nek conj} is true modulo $q^{7}$.
\end{prop}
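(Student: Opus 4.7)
The plan is to reduce the statement to a finite computation for each $n=0,1,\ldots,6$ and then carry out that computation using the vertex formalism of Lemma \ref{Vlem} together with the Maple implementation that produced $\mathsf{w}_{\pi}$. Since $\Hilb^{n}(\mathbb{C}^{4})^{T} = \Hilb^{n}(\mathbb{C}^{4})^{(\mathbb{C}^{*})^{4}}$ (Lemma \ref{Tfixlocus}) consists of the finitely many solid partitions of size $n$, the coefficient of $q^{n}$ in $Z_{\mathbb{C}^{4}}$ is the finite sum
\begin{equation*}
\sum_{|\pi|=n} \pm\,\mathsf{w}_{\pi},
\end{equation*}
where the sign attached to each $\pi$ is precisely the orientation datum $o(\mathcal{L})$. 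Thus the task is to exhibit, for each $n\le 6$ and each solid partition $\pi$ of size $n$, a choice of sign so that the sum matches the coefficient of $q^{n}$ in the Taylor expansion of $\exp\!\big((\lambda_{1}+\lambda_{2})(\lambda_{1}+\lambda_{3})(\lambda_{2}+\lambda_{3})/(\lambda_{1}\lambda_{2}\lambda_{3}(\lambda_{1}+\lambda_{2}+\lambda_{3}))\cdot q\big)$.

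First I would enumerate all solid partitions of size $\le 6$ (there are finitely many, with $140$ of size exactly $6$) and, for each, compute $\mathsf{V}_{\pi}$ from \eqref{defV} using the monomial description \eqref{Zpi}. From $\mathsf{V}_{\pi}$ one extracts $e_{T}(-\mathsf{V}_{\pi}) \in \mathbb{Q}(\lambda_{1},\lambda_{2},\lambda_{3})$ (after imposing $\lambda_{1}+\lambda_{2}+\lambda_{3}+\lambda_{4}=0$) and then $\mathsf{w}_{\pi}$ up to sign via Definition \ref{wpi}. This is exactly the Maple routine already used for Theorem \ref{verify affine conj}, so the computational engine is in place.

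Second, I would fix the signs as indicated in Remark \ref{existencesgns}: for each $\pi$ with $|\pi|\le 6$, compute the specialization $L_{\pi}(0,0,0,-d)\,\mathsf{w}_{\pi}\big|_{\lambda_{1}+\lambda_{2}+\lambda_{3}=0}$ and choose the sign of $\mathsf{w}_{\pi}$ so that this specialization equals $(-1)^{|\pi|}\omega_{\pi}\prod_{l=1}^{\pi_{111}}(d-l+1)$ with $\omega_{\pi}>0$ (this is possible by Proposition \ref{verify conj on poly dep} for $|\pi|\le 6$). This algorithmic rule assigns a definite sign to each vertex contribution, and is the same rule used to verify Theorem \ref{verify affine conj}.

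Third, with these signs fixed I would form $\sum_{|\pi|=n}\pm\mathsf{w}_{\pi}$ for $n=0,\ldots,6$ and compare the resulting rational function in $\lambda_{1},\lambda_{2},\lambda_{3}$ to the $n$-th Taylor coefficient of the conjectured exponential, clearing denominators to check equality as polynomials in $\lambda_{i}$. The main obstacle, as in Theorem \ref{verify affine conj}, is computational: the number of solid partitions grows rapidly and each $\mathsf{w}_{\pi}$ is a nontrivial rational function, so efficient bookkeeping of the vertex expression and simplification of large sums of rational functions over $\mathbb{Q}(\lambda_{1},\lambda_{2},\lambda_{3})$ is essential. Since no insertion is present ($L=\mathcal{O}$, i.e.\ $L_{\pi}\equiv 1$ for the insertion factor), the computation is in fact somewhat simpler than that of Theorem \ref{verify affine conj}, and the previously implemented Maple routine suffices to finish the verification through order $q^{6}$.
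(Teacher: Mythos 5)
Your proposal is correct and takes essentially the same route as the paper: the authors likewise reduce the claim to a finite computation, using their Maple implementation of $\mathsf{w}_\pi$ for all solid partitions of size at most $6$, fixing signs by the rule of Remark \ref{existencesgns}, and comparing $\sum_{|\pi|=n}\pm\,\mathsf{w}_\pi$ with the Taylor coefficients of the conjectured exponential.
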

In fact, the signs of Nekrasov's conjecture seem to be unique as well:
\begin{prop}
Modulo $q^5$, there are unique choices of signs for which Conjecture \ref{nek conj} holds.
\end{prop}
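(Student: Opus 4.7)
The existence of a sign assignment making Conjecture \ref{nek conj} hold modulo $q^5$ is already covered by the preceding proposition (which establishes it modulo $q^7$), so only uniqueness requires proof. My plan is to verify uniqueness order by order in $q$. At order $q^n$ with $1 \leqslant n \leqslant 4$, the coefficient of $q^n$ on the LHS takes the form $\sum_{|\pi|=n} \epsilon_\pi f_\pi$, where $f_\pi$ is one fixed choice of square root $\pm \mathsf{w}_\pi$ computed via Definition \ref{wpi}, and $\epsilon_\pi \in \{\pm 1\}$ absorbs the remaining sign freedom; the coefficient on the RHS is the explicit $n$-th Taylor coefficient
$$
\frac{1}{n!}\Bigl(\tfrac{(\lambda_1+\lambda_2)(\lambda_1+\lambda_3)(\lambda_2+\lambda_3)}{\lambda_1\lambda_2\lambda_3(\lambda_1+\lambda_2+\lambda_3)}\Bigr)^n.
$$
Uniqueness at each fixed $n$ is equivalent to injectivity of the map
$$
\{\pm 1\}^{N_n} \to \frac{\mathbb{Q}(\lambda_1,\lambda_2,\lambda_3,\lambda_4)}{(\lambda_1+\lambda_2+\lambda_3+\lambda_4)}, \qquad (\epsilon_\pi)\mapsto\sum_\pi \epsilon_\pi f_\pi,
$$
where $N_n$ counts solid partitions of size $n$ (so $N_1=1$, $N_2=4$, $N_3=10$, $N_4=26$), and this in turn follows from $\mathbb{Q}$-linear independence of the collection $\{f_\pi : |\pi|=n\}$.

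The steps are then: (i) for $n=1$, there is a single partition and its sign is fixed by matching leading coefficients with the exponential; (ii) for $n = 2, 3, 4$, compute with the Maple program all $f_\pi$ as rational functions in $\lambda_1,\lambda_2,\lambda_3$ after setting $\lambda_4 = -(\lambda_1+\lambda_2+\lambda_3)$, clear a common denominator so each $f_\pi$ is represented by a polynomial, and then verify $\mathbb{Q}$-linear independence by exhibiting $N_n$ monomials in $\lambda_1,\lambda_2,\lambda_3$ on which the corresponding coefficient matrix has non-vanishing determinant. Combined with existence from the preceding proposition, injectivity immediately upgrades to the desired uniqueness.

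The hardest step will be $n=4$: with $26$ partitions, a brute-force sweep over $2^{26}$ sign patterns is wasteful, and I would instead exploit the $S_4$-action on $\mathbb{C}^4$, which permutes both the fixed-point solid partitions of each size and the corresponding $f_\pi$'s (via the permutation action on $\lambda_1,\ldots,\lambda_4$). Partitioning the $26$ solid partitions into $S_4$-orbits reduces $\mathbb{Q}$-linear independence to a check within each orbit (using isotypic decomposition under the orbit's stabilizer) and among distinct orbit representatives — a substantially smaller computation that the Maple program can handle directly. A subtle point to watch is that the $S_4$-action on solid partitions is not manifest in the chosen labeling convention (which singles out $t_4$ in Definition \ref{wpi}), so one must verify that after clearing denominators the resulting polynomials transform equivariantly under $S_4$; this is automatic from the $S_4$-symmetry of $\Hilb^n(\mathbb{C}^4)$ but requires care in implementation.
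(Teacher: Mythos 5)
Your strategy is sound and, like the paper's, ultimately rests on a machine computation, but the certification method is genuinely different. The paper offers no argument for this proposition beyond reporting a Maple check; the only uniqueness procedure it describes elsewhere (the remark following Theorem \ref{verify affine conj}) either brute-forces the sign vectors at low order or exploits the polynomial dependence on the insertion exponent $d$ --- a tool unavailable for Conjecture \ref{nek conj}, which has no insertion. You instead certify the stronger statement that the $N_n$ functions $f_\pi$ are $\mathbb{Q}$-linearly independent via a single $N_n\times N_n$ determinant, which replaces a $2^{N_n}$ enumeration by one linear-algebra check. The one logical caveat is that linear independence is sufficient but not necessary: if the $f_\pi$ of some size satisfied a $\mathbb{Q}$-linear relation, your determinant would vanish and the method would be inconclusive even though uniqueness (which only requires that no nonzero vector of $\{-1,0,1\}^{N_n}$, namely half the difference of two admissible sign vectors, lie in the kernel) could still hold; you should state this weaker finite check as the fallback, since you cannot know in advance that independence holds. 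Finally, the $S_4$ refinement is a harmless optimization, but note that because $\mathsf{w}_\pi$ is defined only up to sign (Definition \ref{wpi}) the chosen representatives $f_\pi$ are $S_4$-equivariant only up to signs, so the orbit/isotypic analysis must be applied to the lines spanned by the $f_\pi$ rather than to the functions themselves; for a $26\times 26$ determinant this machinery is in any case unnecessary.
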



\begin{thebibliography}{99}
\bibitem{ABMM} A.~O.~L.~Atkin, P.~Bratley, I.~G.~McDonald and J.~K.~S.~McKay, \textit{Some computations for m-dimensional partitions}, 
Proc.~Camb.~Phil.~Soc.~63 (1967) 1097--1100.
\bibitem{BF} K.~Behrend and B.~Fantechi, \textit{Symmetric obstruction theories and Hilbert schemes of points on threefolds}, Alg.~and Number Theory 2 (2008) 313--345.
\bibitem{BJ} D.~Borisov and D.~Joyce, \textit{Virtual fundamental classes for moduli spaces of sheaves on Calabi-Yau four-folds},
Geom.~Topol.~21 (2017) 3231--3311.
\bibitem{BK}J.~Bryan and M.~Kool, \textit{Donaldson-Thomas invariants of local elliptic surfaces via the topological vertex}, arXiv:1608.07369.
\bibitem{CK2} Y.~Cao and M.~Kool, \textit{Counting zero-dimensional subschemes in higher dimensions}, arXiv:1805.04746.
\bibitem{CL} Y.~Cao and N.~C.~Leung, \textit{Donaldson-Thomas theory for Calabi-Yau 4-folds}, arXiv:1407.7659.
\bibitem{CL2} Y.~Cao and N.~C.~Leung, \textit{Orientability for gauge theories on Calabi-Yau manifolds}, Adv.~in Math.~314 (2017) 48--70.
\bibitem{CG} N.~Chriss and V.~Ginzburg, \textit{Representation theory and complex geometry}, Modern Birkh\"auser Classics (2010).
\bibitem{EG} D.~Edidin and W.~Graham, \textit{Characteristic classes and quadric bundles}, Duke Math.~J.~78 (1995) 277--299.
\bibitem{Ful} W.~Fulton, \textit{Introduction to toric varieties}, Princeton University Press (1993).
\bibitem{Huy} D.~Huybrechts, \textit{Fourier-Mukai transforms in algebraic geometry}, 
Oxford Mathematical Monographs, Oxford University Press (2006). 
\bibitem{KT1} M.~Kool and R.~P.~Thomas, \textit{Reduced classes and curve counting on surfaces I: theory},  Algebr.~Geom.~1 (2014) no.~3 334--383. 
\bibitem{Lehn1} M. Lehn, \textit{Chern classes of tautological sheaves on Hilbert schemes of points on surfaces},
Invent.~Math.~136 (1999) no.~1 157--207.
\bibitem{Lehn2} M. Lehn, \textit{Lectures on Hilbert schemes}, CRM Proc.~and Lecture Notes Vol.~38  (2004) 1--30.
\bibitem{Li} J.~Li, \textit{Zero dimensional Donaldson-Thomas invariants of threefolds}, Geom.~Topol.~10 (2006) 2117--2171.
\bibitem{LP}  M.~Levine and R.~Pandharipande, \textit{Algebraic cobordism revisited}, Invent.~Math.~176 (2009) 63--130.
\bibitem{MNOP} D. Maulik, N.~Nekrasov, A.~Okounkov, and R.~Pandharipande, \textit{Gromov-Witten theory and Donaldson-Thomas theory I, II}, Compositio Math.~142 (2006) 1263--1285, 1286--1304.
\bibitem{MS}E.~Miller and B.~Sturmfels, \textit{Combinatorial commutative algebra}, GTM 227, Springer-Verlag (2005).
\bibitem{Nekrasov} N. Nekrasov, \textit{Magnificent Four}, arXiv:1712.08128 [hep-th].
\bibitem{PTVV}T.~Pantev, B.~T\"{o}en, M.~Vaqui\'{e} and G.~Vezzosi, \textit{Shifted symplectic structures},
Publ.~Math.~I.H.E.S.~117 (2013) 271--328.
\bibitem{Stanley} R.~P.~Stanley, \textit{Ordered structures and partitions}, PhD thesis, Harvard University (1971).
\bibitem{Thomas} R.~P.~Thomas, \textit{A holomorphic Casson invariant for Calabi-Yau 3-folds, and bundles on K3 fibrations}, J.~Diff.~Geom.~54 (2000) 367--438.
\bibitem{Yau} S.-T.~Yau, \textit{On the Ricci curvature of a compact K\"{a}hler manifold and the complex Monge--Amp\`{e}re equation.~I}, Comm.~Pure Appl.~Math.~31 (1978) no.~3 339--411.
\end{thebibliography}
\end{document}